\documentclass[12pt]{amsart}
\usepackage{ amsmath, amsthm, amsfonts, amssymb, color}
 \usepackage{mathrsfs}
\usepackage{amsfonts, amsmath}
 \usepackage{amsmath,amstext,amsthm,amssymb,amsxtra}
 \usepackage{txfonts} 
 \usepackage[colorlinks, citecolor=blue,pagebackref,hypertexnames=false]{hyperref}
 \allowdisplaybreaks
 \usepackage{pgf,tikz}
 \usepackage{multirow}
 \usepackage{diagbox} 

 \usepackage{tikz}

\usetikzlibrary{decorations.pathreplacing}

\begin{document}

 \baselineskip 16.6pt
\hfuzz=6pt

\widowpenalty=10000

\newtheorem{cl}{Claim}
\newtheorem{theorem}{Theorem}[section]

\newtheorem{proposition}[theorem]{Proposition}
\newtheorem{corollary}[theorem]{Corollary}
\newtheorem{lemma}[theorem]{Lemma}

\theoremstyle{definition}
\newtheorem{definition}[theorem]{Definition}

\newtheorem{assum}{Assumption}[section]
\newtheorem{example}[theorem]{Example}
\newtheorem{Pro}[theorem]{Problem}
\newtheorem{remark}[theorem]{Remark}
\renewcommand{\theequation}
{\thesection.\arabic{equation}}

\def\SL{\sqrt H}

\newcommand{\mar}[1]{{\marginpar{\sffamily{\scriptsize
        #1}}}}

\newcommand{\as}[1]{{\mar{AS:#1}}}

\newcommand\R{\mathbb{R}}
\newcommand\RR{\mathbb{R}}
\newcommand\CC{\mathbb{C}}
\newcommand\NN{\mathbb{N}}
\newcommand\ZZ{\mathbb{Z}}
\newcommand\HH{\mathbb{H}}
\def\RN {\mathbb{R}^n}
\renewcommand\Re{\operatorname{Re}}
\renewcommand\Im{\operatorname{Im}}

\newcommand{\mc}{\mathcal}
\newcommand\D{\mathcal{D}}
\def\hs{\hspace{0.33cm}}
\newcommand{\la}{\alpha}
\def \l {\alpha}
\newcommand{\eps}{\tau}
\newcommand{\pl}{\partial}
\newcommand{\supp}{{\rm supp}{\hspace{.05cm}}}
\newcommand{\x}{\times}
\newcommand{\lag}{\langle}
\newcommand{\rag}{\rangle}

\newcommand\wrt{\,{\rm d}}

\newtheorem{theoremA}{Theorem}
\renewcommand{\thetheoremA}{\Alph{theoremA}} 

\title[]{Sparse bounds for rough non-translation-invariant Calder\'{o}n-Zygmund operators  }
\maketitle
%
%

\vspace{-0.5cm}
\begin{center}
{\bf Yanping Chen \footnote{\small {Corresponding author.\ }}}\\
Department of Mathematics,\\
Northeastern University,\\
Shenyang 110004, China\\
E-mail: {\it yanpingch@126.com}
\vskip 0.1cm
{\bf Teng Wang}\\
School of Mathematics and Physics,\\
University of Science and Technology Beijing,\\
Beijing 100083,  China \\
E-mail: {\it tengwustb@126.com} \vskip 0.1cm

\end{center}
\vspace{-1cm}

\date{\today}

\renewcommand{\thefootnote}{}
\footnote{2020 \emph{Mathematics Subject Classification}.
 42B20, 42B25, 47G10.}


\footnote{\emph{Key words and phrases}. Sparse bounds; Singular integrals; Rough kernels; Non-translation-invariant operators.}

\begin{abstract}
In this paper, we establish sparse bounds for singular integral operators with rough non-translation-invariant Calder\'{o}n-Zygmund kernels.

Our approach verifies the localized hypothesis in the sparse domination framework introduced by Conde-Alonso, Di Plinio et al.
The proof refines Seeger's microlocal decomposition using a Littlewood-Paley analysis adapted to the non-translation-invariant setting.

As applications, we obtain quantitative weighted estimates for Calder\'{o}n-type commutators and other singular integrals with rough kernels.

\end{abstract}
	
	\maketitle

\arraycolsep=1pt

\section{Introduction}

Quantitative weighted estimates for singular integral operators have been extensively studied in the past decade.
A central theme in this area is to determine the \emph{sharp dependence} of operator norms on the Muckenhoupt $A_p$ characteristics.
For classical Calder\'{o}n-Zygmund operators,  the sharp weighted bounds are now well understood; see, for instance, \cite{Hytonen, Lacey, Lerner, LN13}.

For singular integrals with rough kernels, Hyt\"{o}nen, Roncal and Tapiola \cite{HRT} obtained the first quantitative estimate for the rough homogeneous singular integral
\begin{equation}\label{IOd}
  I_{\Omega}f(x)=\text{p.v.} \int_{\mathbb{R}^n}\frac{\Omega(x-y)}{|x-y|^n}
f(y)dy,
\end{equation}
where $\Omega\in L^{1}(\mathbb{S}^{n-1})$ is a homogeneous function of degree zero:
\begin{equation}\label{a}
  \Omega(rx')=\Omega(x'), \quad \text{for  any } x'\in \mathbb{S}^{n-1}\text{ and } r>0,
\end{equation}
and satisfies the cancellation condition
\begin{equation}\label{c}
\int_{\mathbb{S}^{n-1}}\Omega(x') d\sigma(x')=0.
\end{equation}
In particular, for $w\in A_2$ (see Subsection \ref{weights} for definitions), they proved
\begin{equation*}
\| I_{\Omega}f\|_{L^2(w)} \lesssim [w]_{A_2}^{2}\|f\|_{L^2(w)}.
\end{equation*}

Significant progress has been made toward understanding the quantitative weighted bounds of singular integrals with rough kernels; see, e.g., \cite{CLRT2021, Chen, ChenLi, CYZ, CCDO, DHL2020, GraW, LPRR}.

A major advance in quantitative bounds is the development of \emph{sparse domination} techniques.
These methods allow singular integral operators to be dominated by positive sparse forms that take the form of discrete sums over dyadic cubes satisfying strong sparsity conditions.
For a systematic account, see \cite{Perey}.

One common approach to sparse domination relies on weak $L^p$ bounds for the associated grand maximal operators; see, e.g., \cite{BFP, HRT, Lacey, LN13, Lerner16, Li18}.
Nevertheless, as observed in \cite{Lerner16}, it is not even clear whether the grand maximal operator is bounded on $L^2$ for $I_{\Omega}$, even $\Omega\in L^{\infty}(\mathbb{S}^{n-1})$.
In \cite{CCDO}, a new approach was developed to obtain sparse domination results, relying only on the uniform $L^2$ (or $L^r$ for any $r>1$) boundedness of the truncated operators.
This method has proven to be a powerful and flexible tool in the study of quantitative weighted bounds beyond the classical Calder\'{o}n-Zygmund theory.
There are many other recent developments that carry the sparse domination techniques much further; see, e.g., \cite{Ball, Belt, Culiuc}.
Despite these advances, sparse domination for non-convolution operators with rough kernels has not yet been established.

Our original motivation comes from establishing sparse domination of the Calder\'{o}n commutator with rough kernels, introduced by Calder\'{o}n \cite{Ca65}, defined by
\begin{equation}\label{TOb}
  T_{\Omega, B}f(x)=\text{p.v.} \int_{\mathbb{R}^n}\frac{\Omega(x-y)}{|x-y|^n}\frac{B(x)-B(y)}{|x-y|}
f(y)dy,
\end{equation}
where $\Omega\in L^1(\mathbb{S}^{n-1})$ satisfies \eqref{a}, $B\in \mathrm{Lip}(\mathbb{R}^n)$.
This operator is challenging, as its kernel is both rough and non-convolution.
Consequently, the approach in \cite{CCDO} cannot be applied directly, since verifying the localized hypothesis (L) (see Lemma \ref{thm:C}) for \(I_\Omega\) relies on its convolution structure.
To overcome these obstacles, we introduce a more general class of singular integral operators that provides a unified setting encompassing both the rough Calder\'{o}n commutator and other important models.


In this paper, we consider singular integral operators of the form
\begin{equation}\label{TO}
T_{\Omega} f(x) = \text{p.v.} \int_{\mathbb{R}^{n}} \Omega(x-y) K(x,y) f(y)\, dy,
\end{equation}
where $\Omega\in L^{1}(\mathbb{S}^{n-1})$ satisfies \eqref{a}
and $K$ satisfies the size condition
\begin{equation}\label{Ksize}
  |K(x,y)| \leq \frac{C}{|x-y|^n},
\end{equation}
and the regularity conditions: for some fixed $\delta\in(0,1]$,
\begin{equation}\label{Ksmooth}
  \begin{aligned}
    |K(x_1,y)-K(x_2,y)|\leq C\frac{|x_1-x_2|^{\delta}}{|x_1-y|^{n+\delta}},\quad |x_1-y|>2|x_1-x_2|,\\
    |K(x,y_1)-K(x,y_2)|\leq C\frac{|y_1-y_2|^{\delta}}{|x-y_1|^{n+\delta}},\quad |x-y_1|>2|y_1-y_2|.
  \end{aligned}
\end{equation}
In addition, we assume that $\Omega$ and $K$ satisfy appropriate cancellation conditions, so that $T_{\Omega}f(x)$ is well defined for $f\in C^{\infty}_c(\mathbb{R}^n)$, and can be extended to a bounded operator on $L^2(\mathbb{R}^n)$.

This class of singular integral operators $T_{\Omega}$ was first introduced by Ding and Lai \cite{DL} in their study of weak type $(1,1)$ boundedness criteria for singular integrals with rough kernels.
It encompasses several important models.
When $\Omega$ is a constant, $T_{\Omega}$ reduces to a standard Calder\'{o}n-Zygmund operator.
If $K(x,y)=|x-y|^{-n}$ and $\Omega$ satisfies the cancellation condition \eqref{c},
then $T_{\Omega}$ recovers the rough homogeneous singular integral $I_{\Omega}$.
Moreover, \(T_\Omega\) covers a variety of important non-convolution operators, including the Calder\'{o}n commutator \cite{Ca65}, its higher-order \cite{Hof90}, fractional \cite{Sego71}, and general variants \cite{Ca80}, the Baj\v{s}anski-Coifman type commutator \cite{Bajsan}, and general Muckenhoupt-type singular integrals \cite{Mucken}.

This unified framework allows us to handle a broad class of singular integral operators and systematically analyze their sparse bounds and quantitative weighted behavior.
Our approach is in the spirit of \cite{CCDO, DL, Seeger2},
building on Lemma \ref{thm:C}, we refine Seeger's microlocal decomposition by incorporating a Littlewood-Paley analysis, adapting it to the non-translation-invariant setting, and use it to verify the localized hypothesis \eqref{L}.
As a result, we establish new sparse bounds and quantitative weighted estimates for rough singular integrals with non-translation-invariant Calder\'{o}n-Zygmund kernels. While these estimates are analogous to those for convolution operators, their extension to non-convolution operators provides new results that were previously unavailable.

\subsection{Main Result}
To state our main result precisely, recall that a collection $\mathcal{S}$ contained in a dyadic lattice $\mathcal{D}$ is called an $\eta$-sparse $(0<\eta<1)$ family if for each $Q \in \mathcal{S}$ there exists a measurable set $E_Q\subset Q$ such that $\eta|Q| \leq |E_Q|$, and the sets $\{E_Q\}_{Q\in\mathcal{S}}$ are pairwise disjoint. For a detailed account of dyadic lattices and sparse families, see \cite{LN}.

We also recall the Orlicz-Lorentz norm $L^{q,1}\log L(\mathbb{S}^{n-1})$, defined by
$$
\|\Omega\|_{L^{q,1}\log L} := q \int_0^{\infty} t \log(e+t)
|\{\theta \in \mathbb{S}^{n-1} : |\Omega(\theta)| > t\}|^{\frac{1}{q}} \frac{dt}{t}, \quad 1 \le q < \infty.
$$

Our main result can be stated as follows.

\begin{theorem} \label{Thm}
Let $1<q\leq \infty$, $\Omega\in L^q(\mathbb{S}^{n-1})$ satisfy \eqref{a}.
Suppose $K$ satisfies \eqref{Ksize} and \eqref{Ksmooth}.
In addition, assume that $\Omega$ and $K$ satisfy suitable cancellation conditions such that $T_{\Omega}f(x)$ in \eqref{TO} is well defined for $f\in C^{\infty}_c(\mathbb{R}^n)$ and extends to a bounded operator on $L^2(\mathbb{R}^n)$, with operator norm controlled by $\|\Omega\|_{L^q}$.
Then, for all $1 < s < \infty$, $f \in L^s(\mathbb{R}^{n})$, and $g \in L^{s'}(\mathbb{R}^{n})$, there holds
\begin{align*}
\Big|\int_{\mathbb{R}^{n}} T_{\Omega}(f) g \, dx \Big| \lesssim p' \sup_{\mathcal{S}} \sum_{Q \in \mathcal{S}}
\Big( \int_Q |f| \Big) \Big( \frac{1}{|Q|} \int_Q |g|^p \Big)^{1/p}
\begin{cases}
\|\Omega\|_{L^{q,1}\log L}, & 1<q<\infty, p \geq q',\\
\|\Omega\|_{L^\infty}, & 1<p<\infty.
\end{cases}
\end{align*}
\end{theorem}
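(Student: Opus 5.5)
We will deduce the theorem from the abstract sparse domination principle of Conde-Alonso, Culiuc, Di Plinio and Ou, recorded above as Lemma \ref{thm:C}. We truncate the kernel dyadically. Fix a smooth radial $\phi$ supported in $\{1/2\le|x|\le 2\}$ with $\sum_j\phi(2^{-j}x)=1$ for $x\neq0$, and set
\[
T_jf(x)=\int \Omega(x-y)K(x,y)\phi\bigl(2^{-j}(x-y)\bigr)f(y)\,dy .
\]
Then $T_\Omega=\sum_j T_j$, and the truncations $\sum_{L_1<j\le L_2}T_j$ are uniformly bounded on $L^2$. This uniform bound follows from the assumed $L^2$ bound for $T_\Omega$ together with the size condition \eqref{Ksize}, since $\|\Omega\|_{L^1}\le\|\Omega\|_{L^q}$ controls the difference between a sharp and a smooth truncation. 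Lemma \ref{thm:C} reduces the sparse bound to two inputs: this $L^2$ bound, and the localized hypothesis \eqref{L}. The latter is a decay estimate of the form
\[
\Bigl|\Bigl\langle \sum_{j>s}T_j(b_Q),\,h\Bigr\rangle\Bigr|\lesssim 2^{-\epsilon s}\,C_\Omega\,\|b\|\,\|h\|_{L^p\text{-average}},
\]
where $b=\sum_Q b_Q$ is a function with mean-zero pieces adapted to dyadic cubes of side $2^{k}$ and the $T_j$ act at scales $2^{j+k}$ with $j>s$. The factor $p'$ and the Orlicz-Lorentz norm in the final bound come from how the constant in \eqref{L} depends on $p$ and on $\Omega$.

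To verify \eqref{L} we follow Seeger's microlocal decomposition, adapted to the variable kernel. For each $j$ we cover $\mathbb S^{n-1}$ by caps of diameter about $2^{-j\gamma}$ with a partition of unity $\chi_\nu$. This gives $T_j=\sum_\nu T_{j,\nu}$, where $T_{j,\nu}$ has kernel $\Omega(x-y)\chi_\nu\bigl((x-y)/|x-y|\bigr)K(x,y)\phi_j(x-y)$. Next we insert a frequency localization $\Gamma_{j,\nu}$ to a cone around $e_\nu$ together with Littlewood--Paley pieces at frequency $\gtrsim 2^{-k}2^{j\gamma'}$. Since $K$ is not translation invariant, we cannot apply this projection to the kernel directly. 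Instead we apply it to the input, writing $T_{j,\nu}b=T_{j,\nu}\Gamma_{j,\nu}b+T_{j,\nu}(I-\Gamma_{j,\nu})b$. Each resulting term is then handled as follows.

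\begin{itemize}
\item \emph{Low-frequency part $(I-\Gamma)b$.} Here we use the mean zero of $b_Q$ together with the $\delta$-regularity \eqref{Ksmooth} of $K$ in $y$. Combined with the smoothness of $\phi$ and $\chi_\nu$, this yields a gain $2^{-j\delta'}$ in $L^1\to L^1$, which we then interpolate against the trivial bound.
\item \emph{Non-smooth error.} The frequency cutoff does not commute with multiplication by $K(x,\cdot)$. We treat this commutator error with the same regularity of $K$, at the cost of a power of $2^{j\gamma}$ that is absorbed by choosing $\gamma$ small.
\item \emph{Main term $\sum_\nu T_{j,\nu}\Gamma_{j,\nu}b$.} We use an $L^2$ orthogonality argument: the frequency supports of the $\Gamma_{j,\nu}$ are essentially disjoint in $\nu$. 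This orthogonality holds for the input, and the kernel's $x$-dependence only enters through the boundedness of each $T_{j,\nu}$ on $L^2$. Since $|\Omega|\chi_\nu\le|\Omega|$, each piece is bounded on $L^2$ with a bound in $\|\Omega\|_{L^q}$ times a cap measure $2^{-j\gamma(n-1)/q'}$. Almost-orthogonality over $\nu$ then gives decay $2^{-j\epsilon}$.
\end{itemize}

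Pairing the main term with $h\in L^p$ via Hölder gives the required $L^1\times L^p$ control for $p\ge q'$. The $\log L$ and $L^{q,1}$ refinements come from decomposing $\Omega=\sum_m\Omega_m$ by level sets $|\Omega|\sim2^m$, applying the above with the scale split adapted to $m$, and summing. The $\log$ factor arises from the number of scales $j\lesssim m$ at which no decay is available. The $p'$ dependence arises from summing the geometric decay $2^{-\epsilon j/p'}$ over $j$.

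The main obstacle is the non-translation-invariant almost-orthogonality. Without convolution structure, $T_{j,\nu}\Gamma_{j,\nu}$ is not a multiplier, so we need $T_{j,\nu}^*T_{j,\nu'}$-type (Cotlar) estimates, or a freezing of the $x$-variable of $K$ on cubes of size $2^{j+k-j\gamma''}$ using \eqref{Ksmooth} in $x$. We would first attempt the freezing: on each small cube $K(x,y)\approx K(x_0,y)$ up to an error of $2^{-j\gamma''\delta}$, and the frozen operator in the variable $x-y$ is treated as a weighted convolution amenable to Seeger's argument.
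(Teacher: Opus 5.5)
\emph{Notation.} Below, the paper's $k$ and $\alpha$ play the roles of your $j$ and $\gamma$.

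\textbf{The main gap: the orthogonality term.} The gap is exactly the step you flag as the ``main obstacle'': the orthogonality term in the verification of \eqref{L}.

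\emph{Why your version fails.} You place the directional cutoff on the input, $T_{j,\nu}\Gamma_{j,\nu}b$. You then bound $\|\sum_\nu T_{j,\nu}\Gamma_{j,\nu}b\|_2$ by the $L^2$ norms of the $T_{j,\nu}$ together with orthogonality of the $\Gamma_{j,\nu}b$. Any such argument ends with $\|b\|_2$, which is not available. For $b\in\dot{\mathcal X}_1(\mathscr Q)$ you only know $\|b_L\|_1\lesssim|L|\,\|b\|_{\dot{\mathcal X}_1}$, and $b_L$ may be arbitrarily concentrated.

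\emph{How the paper gets $L^2$ control.} In Seeger's mechanism the $L^2$ bound is available only after the operator has acted on $b$.
\begin{itemize}
\item The paper majorizes $|(I-P_{j-k+\kappa_\delta(k)})U^{k,l}_jb_{j-k}|$ pointwise by $\|\Omega_k\|_\infty H^{k,l}_j*|b_{j-k}|$.
\item Here $H^{k,l}_j$ is the normalized indicator of a tube of width $\gtrsim2^{j-k\alpha}\gg2^{j-k}=\ell(L)$. At that scale $b$ behaves like a bounded function.
\item The cross-scale terms $H^{k,l}_j*H^{k,l}_i*|b_{i-k}|$, $i\le j$, are then summed directly.
\end{itemize}
This forces the frequency cutoff onto the output: $U^{k,l}_j=\Psi_{k,l}U^{k,l}_j+(I-\Psi_{k,l})U^{k,l}_j$. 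Since $\Psi_{k,l}$ is a multiplier acting last, Plancherel gives the orthogonality with no translation invariance of $K$ whatsoever. So your premise that non-translation-invariance forbids projecting after the operator is the misstep. The freezing/Cotlar route you propose to attempt is unnecessary, and as written it is not carried out.

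\emph{The geometry is also reversed.} The symbol of the $\nu$-th piece lives in the conic neighbourhood $\{|\langle e_\nu,\xi'\rangle|\lesssim2^{-k\alpha}\}$ of the hyperplane $e_\nu^{\perp}$, not in a cone around $e_\nu$. These neighbourhoods are not essentially disjoint: they overlap about $2^{k\alpha(n-2)}$ times, see \eqref{Phi}. The gain comes from comparing this overlap with the $\approx2^{k\alpha(n-1)}$ caps. With cones around $e_\nu$, your complementary piece would contain the high frequencies near $e_\nu^\perp$. There neither the mean zero of $b_L$ nor nonstationary phase gives any decay.

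\textbf{The missing idea once the cutoff is on the output.} Non-translation-invariance is then pushed entirely into the $L^1$ estimate for $(I-\Psi_{k,l})U^{k,l}_j$, and your sketch has no counterpart for it. That estimate integrates by parts $N$ times in $r$ along $w=y+r\theta$, so it needs radial derivatives of the kernel in its \emph{first} variable. But \eqref{Ksmooth} only gives $\delta$-H\"older continuity. The paper's remedy has three parts.
\begin{itemize}
\item It mollifies $K_j$ in $x$ at scale $2^{j-\kappa_\delta(k)}$, with $\kappa_\delta(k)=[2\delta^{-1}\log_2k]+10$. The error is $O(k^{-2})$ by the $x$-regularity, which is summable in $k$ (Lemma \ref{lemTerr}).
\item Each derivative then costs only $2^{\kappa_\delta(k)}\approx k^{2/\delta}$. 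This polynomial loss is beaten by $2^{-k/8}$.
\item Low frequencies are removed by splitting off $P_{j-k+\kappa_\delta(k)}T^k_j$. That piece is controlled via the mean zero of $b_L$ and the $y$-regularity (Lemma \ref{Pbh}).
\end{itemize}

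\textbf{Where $p'$ comes from.} It does not come from H\"older pairing with $h\in L^p$. It comes from interpolating in $h$ between the non-decaying $\mathcal Y_1$ bound (Lemma \ref{UVbh}) and the decaying $\mathcal Y_\infty$ bound (Lemma \ref{Ubh}). One also truncates $\Omega$ at height $2^{k/(16p')}\|\Omega\|_{L^q}$.

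\textbf{A smaller point: condition \eqref{T}.} Your justification is incomplete. Comparing sharp and smooth truncations via $\|\Omega\|_{L^1}$ does not show that either one is uniformly bounded. For smooth truncations you could average modulated copies $e^{ix\cdot\xi}T_\Omega(e^{-i(\cdot)\cdot\xi}f)$ against $\widehat{\eta}$, whose $L^1$ norm is dilation invariant. The paper instead uses the Ding--Lai weak $(1,1)$ bound together with a truncation lemma.
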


\begin{remark}\label{remark1}
Theorem \ref{Thm} provides a unified sparse domination criterion for a wide class of singular integral operators with rough non-convolution kernels, including the Calder\'{o}n commutator, its higher-order, fractional, and general variants, the Baj\v{s}anski-Coifman commutator, and general Muckenhoupt-type singular integrals.
In the special case where \(K(x,y)=|x-y|^{-n}\) and \(\Omega\in L^{q}(\mathbb{S}^{n-1})\) with \(1<q\leq\infty\), satisfying \eqref{a} and \eqref{c}, \(T_{\Omega}\) reduces to the rough homogeneous singular integral \(I_{\Omega}\), which is bounded on \(L^2(\mathbb{R}^n)\) with norm controlled by \(\|\Omega\|_{L^q}\).
In this case, our sparse domination criterion recovers the result of \cite{CCDO}.
\end{remark}

We next highlight the main differences between our approach and previous sparse domination schemes:

\begin{enumerate}
  \item[\;(i)]
    The verification of assumption \eqref{L} in \cite{CCDO} relies on a decay estimate that uses the commutativity of convolution operators, which fails in the non-convolution setting. We overcome this by employing a microlocal analysis technique developed in \cite{DL, Seeger2}.

 \item[\;(ii)]
    While Ding and Lai \cite{DL} use the Mihlin multiplier and pigeonholing in certain steps, this is not suitable for our sparse decomposition.
    We instead apply an $L^1$ Fourier multiplier estimate inside the microlocal decomposition, following \cite{Seeger2} to exploit the cancellation of $I-P_{j-k+\kappa_{\delta}(k)}$ and control the remainder terms efficiently.

 \item[\;(iii)]
  Following \cite{DL}, we also note that the cancellation of $\Omega$ is not essential for our sparse domination, provided $T_\Omega$ is $L^2$ bounded.

\end{enumerate}

\smallskip
The strategy in the proof of Theorem \ref{Thm} is as follows.
We first apply a partition of unity to decompose the kernel.
Next, by using the sparse domination theory introduced by Conde-Alonso, Di Plinio et al. (see Lemma \ref{thm:C}), we reduce the problem to verifying the assumption \eqref{L} (cf. \eqref{forL}).
We then split the index \(k\) into two regimes.
For large \(k\), we approximate \(T_j\) by an operator \(T^k_j\) obtained by regularizing the kernel at the appropriate scale,
and decompose
$$T^k_j=P_{j-k+\kappa_{\delta}(k)}T^k_j+(I-P_{j-k+\kappa_{\delta}(k)})T^k_j.$$
The first term requires a refined use of the cancellation properties of $b_L$ (see Lemma \ref{Pbh}).
For the second term, we further split the kernel $\Omega$ into its bounded and unbounded parts,
and the main difficulty lies in obtaining the required decay for the bounded part.
This is achieved by a microlocal decomposition, combined with Littlewood-Paley decomposition and $L^1$ Fourier multiplier estimates (see Subsection \ref{UV}).

\medskip
As a direct consequence of sparse domination criterion, we derive new quantitative weighted bounds for $T_{\Omega}$.
Theorem \ref{Thm} implies that $T_{\Omega}$ enjoys the same sparse domination structure as the rough homogeneous singular integral $I_\Omega$ studied in \cite{CCDO}.
Moreover, since $T_{\Omega}$ is essentially self-dual, its dual operator $T_{\Omega}^t$ has kernel $\widetilde{K_{\Omega}}(x,y) = \Omega(y-x) K(y,x)$, the same sparse domination applies to $T_{\Omega}^t$.
Consequently, the weighted estimates in \cite{LPRR} follow without further modification, yielding the following quantitative weighted bounds.

\begin{corollary}\label{cor1}
Let $1<q\leq \infty$ and $\Omega\in L^q(\mathbb{S}^{n-1})$ satisfy \eqref{a}.
Suppose $K$ satisfies \eqref{Ksize} and \eqref{Ksmooth}.
In addition, assume that $\Omega$ and $K$ satisfy suitable cancellation conditions such that $T_{\Omega}f(x)$ in \eqref{TO} is well defined for $f\in C^{\infty}_c(\mathbb{R}^n)$ and extends to a bounded operator on $L^2(\mathbb{R}^n)$, with operator norm controlled by $\|\Omega\|_{L^q}$.
Then $T_{\Omega}$ admits the following quantitative weighted bounds:
\begin{enumerate}
\item (\textbf{$A_p$ bounds}) For any $1 < p < \infty$ and $w\in A_p$,
\begin{equation}\label{Cor1}
\| T_{\Omega}f\|_{L^p(w)} \lesssim [w]_{A_p}^{\frac{1}{p}}([w]_{A_{\infty}}^{\frac{1}{p'}}+[w^{\frac{1}{1-p}}]_{A_{\infty}}^{\frac{1}{p}})\min\{[w]_{A_{\infty}},[w^{\frac{1}{1-p}}]_{A_\infty}\}\|\Omega\|_{L^\infty}\|f\|_{L^p(w)},
\end{equation}
with the corresponding weak type estimate
\begin{align*}
  \|T_{\Omega}f\|_{L^{p,\infty}(w)}\lesssim [w]_{A_p}^{\min\{2, p'\}}\|\Omega\|_{L^\infty} \|f\|_{L^p(w)}.
\end{align*}

\item (\textbf{$A_1$ weak type bound}) For $p=1$ and $w\in A_1$,
\begin{equation}\label{wCor1}
  \|T_{\Omega}f\|_{L^{1,\infty}(w)}\lesssim [w]_{A_1} [w]_{A_\infty} \log_2([w]_{A_\infty}+1)\|\Omega\|_{L^\infty}\|f\|_{L^1(w)}.
\end{equation}

\item (\textbf{Sharp $A_s$ bounds}) For all $1 \leq s < p$ and $w\in A_s$,
\begin{equation}\label{cor2s}
\begin{aligned}
\| T_{\Omega}f\|_{L^p(w)} \lesssim [w]_{A_s}\|f\|_{L^p(w)}
\begin{cases}
\|\Omega\|_{L^{q,1}\log L}, & 1<q<\infty, p >q',\\
\|\Omega\|_{L^\infty}, & 1<p<\infty.
\end{cases}
\end{aligned}
\end{equation}
\end{enumerate}
\end{corollary}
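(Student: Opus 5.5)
The corollary follows from Theorem \ref{Thm} applied to both $T_\Omega$ and its transpose, fed into the weighted machinery of \cite{LPRR} for rough sparse forms. No new harmonic analysis is needed.

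\textbf{Step 1 (sparse forms for $T_\Omega$ and $T_\Omega^t$).} Theorem \ref{Thm} gives
\begin{equation*}
\Big|\int T_\Omega(f)g\Big| \lesssim C_{p,\Omega}\sup_{\mathcal S}\sum_{Q\in\mathcal S}\langle |f|\rangle_{1,Q}\langle |g|\rangle_{p,Q}|Q|,
\end{equation*}
with $C_{p,\Omega}=p'\|\Omega\|_{L^\infty}$, or $p'\|\Omega\|_{L^{q,1}\log L}$ when $p\ge q'$. Next I check that $T_\Omega^t$ is in the same class. Its kernel is $\Omega(y-x)K(y,x)=\widetilde\Omega(x-y)\widetilde K(x,y)$, with $\widetilde\Omega(\theta)=\Omega(-\theta)$ and $\widetilde K(x,y)=K(y,x)$. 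The function $\widetilde\Omega$ is homogeneous of degree zero with the same $L^q$ and $L^{q,1}\log L$ norms. The kernel $\widetilde K$ satisfies \eqref{Ksize}, and satisfies \eqref{Ksmooth} because the two regularity conditions simply swap roles. Finally $T_\Omega^t$ is $L^2$ bounded with the same norm, by duality. So the theorem gives the same $(1,p)$ sparse bound for $T_\Omega^t$, and hence the $(p,1)$ sparse form for $T_\Omega$ as well.

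\textbf{Step 2 (strong $A_p$, weak $A_p$, and $A_1$ bounds).} I take $p=1+\varepsilon$ in the $(1,p)$ sparse form with $\varepsilon\simeq 1/[\,\cdot\,]_{A_\infty}$ chosen optimally. The reverse Hölder inequality lets $\langle |g|\rangle_{1+\varepsilon,Q}$ be absorbed into the weighted averages, at a cost $p'\simeq[\,\cdot\,]_{A_\infty}$.
\begin{enumerate}
\item[(a)] Running this with the reverse Hölder exponent adapted to $\sigma=w^{1/(1-p)}$ gives the factor $[\sigma]_{A_\infty}$. Running the dual form from Step 1 with the exponent adapted to $w$ gives $[w]_{A_\infty}$. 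Taking the better of the two produces the $\min$ in \eqref{Cor1}. The remaining factor $[w]_{A_p}^{1/p}([w]_{A_\infty}^{1/p'}+[\sigma]_{A_\infty}^{1/p})$ is the standard mixed bound for sparse operators.
\item[(b)] The weak-type $L^{p,\infty}(w)$ estimate and the $A_1$ estimate \eqref{wCor1} are read off from the corresponding theorems in \cite{LPRR} for forms $\sum\langle f\rangle_{1,Q}\langle g\rangle_{1+\varepsilon,Q}|Q|$, with the same choice of $\varepsilon$. In the $A_1$ case this choice produces the factor $[w]_{A_\infty}\log([w]_{A_\infty}+1)$.
\end{enumerate}

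\textbf{Step 3 (sharp $A_s$ bounds).} For $w\in A_s$ with $s<p$, I fix the exponent appearing in the sparse form rather than letting it depend on $w$. In the $L^\infty$ case I use the $(p_0,1)$ form with $p_0=(p/s)'$, or more generally any exponent $r\in(1,p/s)$ near $p/s$. Then \cite{LPRR} gives the linear bound $[w]_{A_s}$ with a constant depending only on $p$ and $s$. In the $L^{q}$ case the constraint is that the exponent on $g$ must be at least $q'$; since $p>q'$, a suitable $r$ can be found. I will check that the admissible range $q'\le r$ is compatible with the duality used, and this is the one point where the hypothesis $p>q'$ enters.

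\textbf{Main obstacle.} The only genuinely delicate point is the exponent bookkeeping in Step 3 under the $q'$ restriction, together with confirming that the constants in \cite{LPRR} depend on the form only through $C_{p,\Omega}$. Everything else is transcribed from \cite{LPRR} once Step 1 is in place.
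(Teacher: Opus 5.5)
Your Steps 1 and 2 match the paper's argument. The paper's remark that $T_\Omega$ is ``essentially self-dual'' is exactly your check that $\widetilde\Omega(\theta)=\Omega(-\theta)$ and $\widetilde K(x,y)=K(y,x)$ satisfy the hypotheses of Theorem \ref{Thm}; after that, everything is imported from \cite{LPRR}. In 2(a) the attributions are swapped, though harmlessly since you take the minimum: the $(1,p)$ form, whose $p$-average lands on $gw$, is the one costing $[w]_{A_\infty}$, while the $(p,1)$ form applied to $f=h\sigma$ costs $[\sigma]_{A_\infty}$.

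The genuine gap is the $L^{q,1}\log L$ case of Step 3, which is precisely the check you postponed, and it fails.
\begin{itemize}
\item With only $w\in A_s$ available you cannot use the $(1,r)$ form. It is bounded on $L^p(w)$ only when $p<r'\le q$ and $w$ satisfies a reverse H\"older condition of fixed exponent, which $A_s$ does not give.
\item So you must use the $(r,1)$ form obtained from $T_\Omega^t$. There the exponent $r$ sits on $f$, not on $g$ as you wrote, and Theorem \ref{Thm} supplies this form with $\|\Omega\|_{L^{q,1}\log L}$ only for $r\ge q'$.
\item This form controls $L^p(w)$ only for $w\in A_{p/r}$. Hence $A_s\subset A_{p/r}$ forces $s\le p/r\le p/q'$.
\item The hypothesis $p>q'$ only gives $p/q'>1$, so for $p/q'<s<p$ no admissible $r$ exists.
\end{itemize}
Separately, your choice $p_0=(p/s)'$ satisfies the needed condition $p_0\le p/s$ only when $p\ge 2s$.

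No better choice of exponents fixes this, because the claim in that range is false in general. Take:
\begin{itemize}
\item $n\ge 2$, $K(x,y)=|x-y|^{-n}$, $p>\max\{q,q'\}$, and $\epsilon>0$ with $\frac{n-1}{p}+\epsilon<\frac{n-1}{q}$;
\item $\Omega=\Omega_0-m_0$, where $\Omega_0(\eta)=|\eta-\theta_0|^{-\beta}$ with $\frac{n-1}{p}+\epsilon\le\beta<\frac{n-1}{q}$ (so $\Omega\in L^{q,1}\log L$) and $m_0$ is the spherical mean of $\Omega_0$;
\item $c\in(n-\epsilon,n)$, $a\in(cp-n,n(p-1))$, and $w(x)=|x|^a$, which lies in $A_s$ for $1+a/n<s<p$;
\item $f=|x|^{-c}\chi_{B(0,1)}$, which lies in $L^p(w)$.
\end{itemize}
By homogeneity, $T_\Omega f(x)=|x|^{-c}G(x/|x|)+O(1)$ for $|x|<1/2$, where $G$ is the restriction of $T_\Omega(|\cdot|^{-c})$ to $\mathbb{S}^{n-1}$. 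The contribution of the region $|y|<|\theta-\theta_0|$ gives $G(\theta)\gtrsim|\theta-\theta_0|^{n-c-\beta}-C$ near $\theta_0$. Since $(\beta-n+c)p>n-1$, we get $G\notin L^p(\mathbb{S}^{n-1})$, and hence $T_\Omega f\notin L^p(w)$.

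So your argument, like the paper's one-line appeal to \cite{LPRR}, can only give the $L^{q,1}\log L$ part of (3) in the restricted range $1\le s\le p/q'$. The $L^\infty$ part is unaffected.
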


\begin{remark}\label{remark2}
Corollary \ref{cor1} recovers the quantitative weighted bounds for $I_\Omega$ established in \cite{LPRR} and applies to the broad class of non-convolution singular integrals described in Remark \ref{remark1}.
\end{remark}

We next discuss the main consequences of Corollary \ref{cor1} in relation to previously known results.

Estimate \eqref{Cor1} corresponds to the improved $A_p$ bound introduced in \cite{LPRR}.
More precisely, the right-hand side of \eqref{Cor1} can be compared with the usual $A_p$ exponent as
$$[w]_{A_p}^{\frac{1}{p}}([w]_{A_{\infty}}^{\frac{1}{p'}}+[w^{\frac{1}{1-p}}]_{A_{\infty}}^{\frac{1}{p}})\min\{[w]_{A_{\infty}},[w^{\frac{1}{1-p}}]_{A_\infty}\}\lesssim [w]_{A_p}^{{p'}}.
$$
It is easy to see that $$p'\leq \frac{1}{p-1}\max\{2,p\}\leq 2\max\{1,\frac{1}{p-1}\}.$$
Here, the second exponent was obtained for the rough homogeneous singular integral $I_{\Omega}$ in \cite{CCDO} and the third in \cite{HRT}.
Whether the exponent of the Muckenhoupt constant in \eqref{Cor1} is sharp remains an open question; nevertheless, the present estimates represent the best known bounds obtainable by existing techniques.

The weak type estimate \eqref{wCor1} is related to the weak Muckenhoupt and Wheeden conjecture \cite{Muckenh}.
For the standard Calder\'{o}n-Zygmund operator, the corresponding sharp bound was established in \cite{Ler09}, with a discussion on sharpness in \cite{LNO20}.
For the rough homogeneous singular integral $I_{\Omega}$, see \cite{LPRR}.
The additional factor $[w]_{A_{\infty}}$ in \eqref{wCor1} reflects the roughness of the kernel, and its sharpness is currently unknown.

Estimate \eqref{cor2s} provides sharp bounds for $T_{\Omega}$ on $L^p(w)$ for weights in $A_s$, $1\le s <p$.
For $s=1$, this bound was first obtained for standard Calder\'{o}n-Zygmund operators in \cite{Ler08}, and for $1<s<p$ via extrapolation in \cite{Duo11}.
For $I_{\Omega}$, see \cite{LPRR}.
A discussion on the sharpness of the $[w]_{A_s}$ exponent is given in \cite{Ler12}.
Further background and related results may be found in \cite{CoiF, CorF, Duo11, Feff97, LNO20, Ler12, Ler09}.

\subsection{Applications}
As mentioned in Remark \ref{remark2}, Corollary \ref{cor1} applies to several important classes of non-convolution singular integrals.

We illustrate our results with the following examples:
\begin{itemize}
  \item the Calder\'{o}n commutator, together with its higher-order variants,
  \item the fractional Calder\'{o}n commutator.
  \item the general Calder\'{o}n commutator,
  \item the Baj\v{s}anski-Coifman type Calder\'{o}n commutator,
  \item the general Muckenhoupt-type singular integrals.
\end{itemize}
For all of these operators, we obtain new quantitative weighted bounds, weak $(1,1)$ estimates for $A_1$ weights, and sharp $[w]_{A_s} (1\leq s<p)$ dependence.
Notably, these results are new even for the Calder\'{o}n commutator.

\subsubsection{Calder\'{o}n Commutator and Its Higher-Order Variants}

As a first application of Corollary \ref{cor1}, we establish the quantitative weighted estimates for the Calder\'{o}n commutator and its higher-order variants.
In 1990, Hofmann \cite{Hof90} proved a weighted norm inequality for the higher-order Calder\'{o}n commutator, defined by
\begin{equation*}
T_{\Omega, B, k}f(x)=\text{p.v.} \int_{\mathbb{R}^n}\frac{\Omega(x-y)}{|x-y|^n}\Big(\frac{B(x)-B(y)}{|x-y|}\Big)^k f(y)dy,
\end{equation*}
where $k\geq 1$, $B\in \mathrm{Lip}(\mathbb{R}^n)$, $\Omega\in L^1(\mathbb{S}^{n-1})$ satisfies \eqref{a} and
\begin{equation}\label{ck}
\int_{\mathbb{S}^{n-1}} \Omega(\theta)\theta^{\alpha} d\theta=0, \text{ for all } \alpha \in \mathbb{Z}^n_+ \text{ with } |\alpha|=k.
\end{equation}
Here, $\alpha=(\alpha_1,\cdots,\alpha_n)\in \mathbb{Z}^n_+$ denotes a multi-index, $|\alpha|=\sum_{j=1}^n\alpha_j$, and $x^{\alpha}=\prod_{i=1}^n x_i^{\alpha_i}$ for $x\in \mathbb{R}^n$.

The case $k=1$ corresponds to the Calder\'{o}n commutator $T_{\Omega, B}$ \eqref{TOb},
which originates from the representation of linear differential operators in terms of singular integrals and provides a tool in studying the uniqueness of the Cauchy problems for partial differential equations (see \cite{Ca58, CaCa78}).
It also plays an important role in the theory of Cauchy integrals along Lipschitz curves in $\mathbb{C}$ and in the Kato square root problem on $\mathbb{R}$ (see \cite{Ca58, Meyer90, Meyer91} for details).

\begin{theorem}\label{wTbk}
Let $k\geq 1$ and $B\in \mathrm{Lip}(\mathbb{R}^n)$. Let $1<q\leq \infty$ and $\Omega\in L^{q}(\mathbb{S}^{n-1})$ satisfy \eqref{a} and \eqref{ck}.
Then the following hold:
\begin{enumerate}
\item For any $1 < p < \infty$ and $w\in A_p$, we have
\begin{equation}\label{wTObkf}
\begin{aligned}
\| T_{\Omega, B, k}f\|_{L^p(w)} \lesssim \|\nabla B\|^{k}_{\infty}  [w]_{A_p}^{\frac{1}{p}}([w]_{A_{\infty}}^{\frac{1}{p'}}+[w^{\frac{1}{1-p}}]_{A_{\infty}}^{\frac{1}{p}})\min\{[w]_{A_{\infty}},[w^{\frac{1}{1-p}}]_{A_\infty}\}
\|\Omega\|_{L^\infty}\|f\|_{L^p(w)}.
\end{aligned}
\end{equation}
Moreover, the following weak type estimate holds:
\begin{align*}
  \|T_{\Omega, B, k}f\|_{L^{p,\infty}(w)}\lesssim \|\nabla B\|^{k}_{\infty}  [w]_{A_p}^{\min\{2, {p'}\}}
   \|\Omega\|_{L^\infty} \|f\|_{L^p(w)}.
\end{align*}
\item  For $p=1$ and $w\in A_1$, we have
\begin{align*}
  \|T_{\Omega, B, k}f\|_{L^{1,\infty}(w)}\lesssim \|\nabla B\|^{k}_{\infty}  [w]_{A_1} [w]_{A_\infty} \log_2([w]_{A_\infty}+1)
   \|\Omega\|_{L^\infty}\|f\|_{L^1(w)}.
\end{align*}
\item For all $1 \leq s < p$ and $w\in A_s$, we have
\begin{align*}
\| T_{\Omega, B, k}f\|_{L^p(w)} \lesssim \|\nabla B\|^{k}_{\infty}   [w]_{A_s}  \|f\|_{L^p(w)}
\begin{cases}
\|\Omega\|_{L^{q,1}\log L}, & 1<q<\infty, p \geq q',\\
\|\Omega\|_{L^\infty}, & 1<p<\infty.
\end{cases}
\end{align*}
\end{enumerate}
\end{theorem}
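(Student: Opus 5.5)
Theorem \ref{wTbk} follows from Corollary \ref{cor1} once we put $T_{\Omega,B,k}$ in the form \eqref{TO} and check its hypotheses. Set
$$K(x,y)=\frac{1}{|x-y|^n}\Big(\frac{B(x)-B(y)}{|x-y|}\Big)^k,$$
so that $T_{\Omega,B,k}f(x)=\text{p.v.}\int\Omega(x-y)K(x,y)f(y)\,dy$. By homogeneity we may normalize $\|\nabla B\|_\infty=1$; the factor $\|\nabla B\|_\infty^k$ then comes back by scaling $B\mapsto B/\|\nabla B\|_\infty$. Three things need checking: the size bound \eqref{Ksize}, the regularity \eqref{Ksmooth} with constants $\lesssim\|\nabla B\|_\infty^k$, and $L^2$ boundedness of $T_{\Omega,B,k}$ with norm $\lesssim\|\nabla B\|_\infty^k\|\Omega\|_{L^q}$.

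\textbf{Size.} The Lipschitz condition gives $|B(x)-B(y)|\le\|\nabla B\|_\infty|x-y|$. Hence $|K(x,y)|\le\|\nabla B\|_\infty^k|x-y|^{-n}$, which is \eqref{Ksize}.

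\textbf{Regularity.} Take $|x_1-y|>2|x_1-x_2|$, so that $|x_1-y|\sim|x_2-y|$. Write $a_i=(B(x_i)-B(y))/|x_i-y|$, which satisfy $|a_i|\le\|\nabla B\|_\infty$. Then
$$|a_1-a_2|\le\frac{|B(x_1)-B(x_2)|}{|x_1-y|}+|B(x_2)-B(y)|\,\Big|\frac1{|x_1-y|}-\frac1{|x_2-y|}\Big|\lesssim\|\nabla B\|_\infty\frac{|x_1-x_2|}{|x_1-y|}.$$
Using $|a_1^k-a_2^k|\le k\max|a_i|^{k-1}|a_1-a_2|$ together with the standard estimate $\big||x_1-y|^{-n}-|x_2-y|^{-n}\big|\lesssim|x_1-x_2|\,|x_1-y|^{-n-1}$, we obtain \eqref{Ksmooth} with $\delta=1$. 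The estimate in the $y$ variable is symmetric.

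\textbf{Cancellation and $L^2$ bound.} Moment condition \eqref{ck} is exactly what makes the principal value converge for $f\in C_c^\infty$. Expanding $B(x)-B(y)$ to first order, the leading term $\big(\nabla B(x)\cdot\tfrac{x-y}{|x-y|}\big)^k$ paired with $\Omega$ integrates to zero over spheres by \eqref{ck}. The remaining requirement, $L^2$ boundedness with norm $\lesssim\|\nabla B\|_\infty^k\|\Omega\|_{L^q}$ for $\Omega\in L^q$, $q>1$, I would quote from the literature rather than reprove. This is the known $L^p$ theory of rough Calder\'on commutators: Calder\'on for $k=1$ (via the method of rotations in the odd case and Calder\'on's first commutator theorem), Hofmann \cite{Hof90} for higher order, and Chen–Ding type results for $\Omega\in L^q$. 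This is the one genuinely nontrivial input and the main point to be careful about. In particular, the dependence on $\Omega$ must be linear in $\|\Omega\|_{L^q}$ (equivalently, the bound holds for every $\Omega\in L^q$ and the operator is linear in $\Omega$), and the dependence on $B$ must be exactly $\|\nabla B\|_\infty^k$. Both follow from the cited proofs by homogeneity in $\Omega$ and $B$.

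\textbf{Conclusion.} With all hypotheses verified, Corollary \ref{cor1} applies to $T_{\Omega,B,k}$. Items (1), (2) and (3) of Corollary \ref{cor1} give the three conclusions of Theorem \ref{wTbk}, with the constant $\|\nabla B\|_\infty^k$ carried through. The adjoint kernel $\Omega(y-x)K(y,x)$ is of the same type, so the duality step used inside Corollary \ref{cor1} applies as stated.
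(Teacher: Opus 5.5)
Your proposal is correct and follows the paper's proof: both reduce Theorem \ref{wTbk} to Corollary \ref{cor1} by quoting the $L^2$ bound from Hofmann \cite{Hof90} and checking \eqref{Ksize} and \eqref{Ksmooth} with $\delta=1$ and constants $\lesssim\|\nabla B\|_\infty^k$. The only difference is that you carry out the size and regularity computation explicitly, whereas the paper cites \cite[Subsection 5.2]{DL} for it.
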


\begin{remark}
For the special case $k=1$, Theorem \ref{wTbk} provides a quantitative improvement over the bound in \cite{ChenLi}, under a strictly weaker cancellation condition on $\Omega$: notably, the vanishing of the zeroth moment $|\alpha|=0$ is not required.
\end{remark}

\subsubsection{Fractional Calder\'{o}n Commutator}

We first recall the definition of the \emph{fractional differentiation operator} $D^{\gamma}$ of order $\gamma$ ($0<\gamma<1$), which is defined via the Fourier transform as
\[
\widehat{D^\gamma f}(\xi) = |\xi|^\gamma \hat{f}(\xi), \quad \xi \in \mathbb{R}^n.
\]
Similarly, the \emph{Riesz potential operator} $I_\gamma$ of order $\gamma$ is given by
\[
\widehat{I_\gamma f}(\xi) = |\xi|^{-\gamma} \hat{f}(\xi).
\]
These operators are inverses of each other in the sense that $D^\gamma I_\gamma f = f$ for all $f \in \mathcal{S}(\mathbb{R}^n)$, the space of Schwartz functions.

We next introduce the BMO-Sobolev spaces.
\begin{definition}[BMO-Sobolev space]
For $0<\gamma<1$, the \emph{BMO-Sobolev space} $I_\gamma(\mathrm{BMO})(\mathbb{R}^n)$ is defined by
\[
I_\gamma(\mathrm{BMO})(\mathbb{R}^n) := \{\, f = I_\gamma g : g \in \mathrm{BMO}(\mathbb{R}^n) \,\}.
\]
Equivalently, a locally integrable function $f$ belongs to $I_\gamma(\mathrm{BMO})(\mathbb{R}^n)$ if and only if $D^\gamma f \in \mathrm{BMO}(\mathbb{R}^n)$.
\end{definition}

The fractional Calder\'{o}n commutator is defined by
\begin{equation*}
  T_{\Omega, B, \gamma}f(x)=\text{p.v.} \int_{\mathbb{R}^n}\frac{\Omega(x-y)}{|x-y|^n} \frac{B(x)-B(y)}{|x-y|^\gamma}
f(y)dy,
\end{equation*}
where $0<\gamma<1$, $\Omega$ satisfies \eqref{a}, $B\in I_{\gamma}(\mathrm{BMO})(\mathbb{R}^n)$.
This operator was first investigated by Segovia and Wheeden \cite{Sego71}, who obtained an analogue
for differentiation of a fractional order of the one-dimensional version of Calder\'{o}n's result \cite{Ca65}. For more details on the operator $T_{\Omega, B, \gamma}$, we refer the reader to \cite{CDH16, Mur85} and the reference therein.

Now we state our result for $T_{\Omega, B, \gamma}$.

\begin{theorem}\label{Tbr}
Let $\gamma\in (0,1)$ and $B\in I_{\gamma}(\mathrm{BMO})(\mathbb{R}^n)$.
Let $1<q\leq\infty$ and $\Omega\in L^q(\mathbb{S}^{n-1})$ satisfy \eqref{a} and \eqref{c}.
Then the following hold:
\begin{enumerate}
\item For any $1 < p < \infty$ and $w\in A_p$, we have
\begin{align*}
\| T_{\Omega, B,\gamma}f\|_{L^p(w)} \lesssim \|D^{\gamma}B\|_{\mathrm{BMO}} [w]_{A_p}^{\frac{1}{p}}([w]_{A_{\infty}}^{\frac{1}{p'}}+[w^{\frac{1}{1-p}}]_{A_{\infty}}^{\frac{1}{p}})\min\{[w]_{A_{\infty}},[w^{\frac{1}{1-p}}]_{A_\infty}\}
\|\Omega\|_{L^\infty}\|f\|_{L^p(w)}.
\end{align*}
Moreover, the following weak type estimate holds:
\begin{align*}
  \|T_{\Omega,B,\gamma}f\|_{L^{p,\infty}(w)}\lesssim \|D^{\gamma}B\|_{\mathrm{BMO}}  [w]_{A_p}^{\min\{2, {p'}\}}
  \|\Omega\|_{L^\infty} \|f\|_{L^p(w)}.
\end{align*}
\item For $p=1$ and $w\in A_1$, we have
\begin{align*}
  \|T_{\Omega,B,\gamma}f\|_{L^{1,\infty}(w)}\lesssim  \|D^{\gamma}B\|_{\mathrm{BMO}} [w]_{A_1} [w]_{A_\infty} \log_2([w]_{A_\infty}+1)
  \|\Omega\|_{L^\infty}\|f\|_{L^1(w)}.
\end{align*}
\item For all $1 \leq s < p$ and $w\in A_s$, we have
\begin{align*}
\| T_{\Omega,B,\gamma}f\|_{L^p(w)} \lesssim \|D^{\gamma}B\|_{\mathrm{BMO}} [w]_{A_s}  \|f\|_{L^p(w)}
\begin{cases}
\|\Omega\|_{L^{q,1}\log L}, & 1<q<\infty, p >q',\\
\|\Omega\|_{L^\infty}, & 1<p<\infty.
\end{cases}
\end{align*}
\end{enumerate}
\end{theorem}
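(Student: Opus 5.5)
The plan is to realize $T_{\Omega,B,\gamma}$ as an instance of \eqref{TO} and then apply Corollary \ref{cor1}. Set
\[
K(x,y)=\frac{B(x)-B(y)}{|x-y|^{n+\gamma}},
\]
so that $T_{\Omega,B,\gamma}f(x)=\text{p.v.}\int \Omega(x-y)K(x,y)f(y)\,dy$. The work then reduces to three checks, each with constants proportional to $\|D^\gamma B\|_{\mathrm{BMO}}$:
\begin{itemize}
\item $K$ satisfies the size condition \eqref{Ksize};
\item $K$ satisfies the regularity condition \eqref{Ksmooth} for some $\delta\in(0,1]$;
\item $T_{\Omega,B,\gamma}$ is well defined on $C_c^\infty$ and bounded on $L^2$ with norm $\lesssim \|D^\gamma B\|_{\mathrm{BMO}}\|\Omega\|_{L^q}$.
\end{itemize}
Once these hold, all three parts of the theorem follow by linearity in $B$ and Corollary \ref{cor1}.

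For the size and regularity conditions, I will use the classical characterization $I_\gamma(\mathrm{BMO})=\dot\Lambda_\gamma$ (Strichartz), valid for $0<\gamma<1$. It gives $|B(x)-B(y)|\lesssim \|D^\gamma B\|_{\mathrm{BMO}}|x-y|^\gamma$, which immediately yields \eqref{Ksize}. For the $x$-regularity, assume $|x_1-y|>2|x_1-x_2|$ and write
\[
K(x_1,y)-K(x_2,y)=\frac{B(x_1)-B(x_2)}{|x_1-y|^{n+\gamma}}+(B(x_2)-B(y))\Big(\frac{1}{|x_1-y|^{n+\gamma}}-\frac{1}{|x_2-y|^{n+\gamma}}\Big).
\]
The first term is bounded by $|x_1-x_2|^\gamma|x_1-y|^{-n-\gamma}$. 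For the second, the mean value theorem gives $|x_1-y|^{\gamma}\cdot|x_1-x_2|\,|x_1-y|^{-n-\gamma-1}\le |x_1-x_2|^\gamma|x_1-y|^{-n-\gamma}$. Hence \eqref{Ksmooth} holds with $\delta=\gamma$. The $y$-variable case is symmetric.

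The step I expect to be the main obstacle is the $L^2$ boundedness with norm controlled by $\|\Omega\|_{L^q}$, using only \eqref{c}. I would take it from the literature on fractional Calder\'on commutators with rough kernels (Segovia--Wheeden, and Chen--Ding--Hong \cite{CDH16}). Those works prove $L^p$ boundedness for $\Omega\in L^q$ satisfying \eqref{a} and \eqref{c} and $D^\gamma B\in\mathrm{BMO}$. Their argument writes $B$ through $D^\gamma B$ and uses a Littlewood--Paley/Fourier-transform argument, with the bound tracking $\|\Omega\|_{L^q}$ linearly. Alternatively, I would justify it via the rotation method combined with the one-dimensional Segovia--Wheeden result, but that needs odd $\Omega$. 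Here I would rely on the cited $L^2$ theorem instead, checking that its constant scales as $\|\Omega\|_{L^q}\|D^\gamma B\|_{\mathrm{BMO}}$; this scaling follows from homogeneity in $\Omega$ and in $B$.

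Well-definedness on $C_c^\infty$ follows from the same source: the principal value converges because $|K(x,y)|\lesssim|x-y|^{-n}$ and, near the diagonal, the truncations are controlled by the $L^2$ theory. With all hypotheses of Theorem \ref{Thm} and Corollary \ref{cor1} verified, and every constant in \eqref{Ksize}, \eqref{Ksmooth} and the $L^2$ bound being a multiple of $\|D^\gamma B\|_{\mathrm{BMO}}$, normalizing $B$ gives the three stated estimates.
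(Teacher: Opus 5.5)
Your proposal is correct and follows the paper's proof essentially step for step: the Strichartz embedding gives \eqref{Ksize}, the same two-term splitting plus the mean value theorem gives \eqref{Ksmooth} with $\delta=\gamma$, the $L^2$ bound is quoted from \cite{CDH16}, and Corollary \ref{cor1} finishes. One small correction: Strichartz's result is a proper inclusion of $I_\gamma(\mathrm{BMO})$ (modulo constants) into $\mathrm{Lip}_\gamma$, not an equality, but you only use the inclusion $\|B\|_{\mathrm{Lip}_\gamma}\lesssim\|D^\gamma B\|_{\mathrm{BMO}}$, so the argument is unaffected.
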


\begin{remark}It is worth noting that our weighted bounds improve those in \cite[Theorem 1.1]{CYZ}
by providing a smaller exponent in the $A_p$ characteristic of the weight.

\end{remark}

\subsubsection{General Calder\'{o}n Commutator}
In 1980, Calder\'{o}n \cite{Ca80} introduced the general commutator
\begin{equation*}
T_{\Omega, B, F}f(x)=\text{p.v.} \int_{\mathbb{R}^n}\frac{\Omega(x-y)}{|x-y|^n}F\Big(\frac{B(x)-B(y)}{|x-y|}\Big)
f(y)dy.
\end{equation*}

The study of this commutator is closely related to Cauchy integral on Lipschitz curves and boundary value problems for elliptic equations on non-smooth domains (see \cite{Ca77, Ca80, CaCa78, Fabes}).

\begin{theorem}\label{bF1}
Let $1<q\leq \infty$, $\Omega \in L^q(\mathbb{S}^{n-1})$ satisfy $\Omega(-\theta)=-\Omega(\theta)$ for  $\theta \in \mathbb{S}^{n-1}$.
Let $B\in \mathrm{Lip}(\mathbb{R}^n)$, and let $F(t)$ be real analytic in $\{|t|\leq \|\nabla B\|_{\infty}\}$ satisfy $F(t)=F(-t)$ for $t\in \mathbb{R}$.
Then the following hold:
\begin{enumerate}
\item For any $1 < p < \infty$ and $w\in A_p$, we have
\begin{align*}
\| T_{\Omega, B, F} f\|_{L^p(w)} \lesssim C_{B,F} [w]_{A_p}^{\frac{1}{p}}([w]_{A_{\infty}}^{\frac{1}{p'}}+[w^{\frac{1}{1-p}}]_{A_{\infty}}^{\frac{1}{p}})\min\{[w]_{A_{\infty}},[w^{\frac{1}{1-p}}]_{A_\infty}\}
\|\Omega\|_{L^\infty}\|f\|_{L^p(w)}.
\end{align*}
Moreover, the following weak type estimate holds:
\begin{align*}
  \|T_{\Omega, B, F} f\|_{L^{p,\infty}(w)}\lesssim C_{B,F} [w]_{A_p}^{\min\{2, {p'}\}}
   \|\Omega\|_{L^\infty} \|f\|_{L^p(w)}.
\end{align*}
\item For $p=1$ and $w\in A_1$, we have
\begin{align*}
  \|T_{\Omega, B, F}f\|_{L^{1,\infty}(w)}\lesssim  C_{B,F} [w]_{A_1} [w]_{A_\infty} \log_2([w]_{A_\infty}+1)
   \|\Omega\|_{L^\infty}\|f\|_{L^1(w)}.
\end{align*}
\item For all $1 \leq s < p$ and $w\in A_s$, we have
\begin{align*}
\| T_{\Omega, B, F} f\|_{L^p(w)} \lesssim C_{B,F} [w]_{A_s} \|f\|_{L^p(w)}
\begin{cases}
\|\Omega\|_{L^{q,1}\log L}, & 1<q<\infty, p >q',\\
\|\Omega\|_{L^\infty}, & 1<p<\infty,
\end{cases}
\end{align*}
\end{enumerate}
where $C_{B,F}$ depends only on $n$, $B$, and $F$.
\end{theorem}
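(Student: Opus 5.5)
Theorem \ref{bF1} will be deduced from Corollary \ref{cor1}. We write $T_{\Omega,B,F}$ in the form \eqref{TO} with
$$K(x,y)=\frac{1}{|x-y|^n}F\Big(\frac{B(x)-B(y)}{|x-y|}\Big),$$
and then check two things: the size and smoothness conditions \eqref{Ksize}, \eqref{Ksmooth}, and the $L^2$ boundedness with norm controlled by $\|\Omega\|_{L^q}$.

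\textbf{Size.} Since $|B(x)-B(y)|\le \|\nabla B\|_\infty|x-y|$, the argument of $F$ lies in $\{|t|\le \|\nabla B\|_\infty\}$. On this compact set $F$ is bounded, so \eqref{Ksize} holds with $C=\sup_{|t|\le\|\nabla B\|_\infty}|F(t)|$.

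\textbf{Smoothness.} This is the delicate point, since $B$ is only Lipschitz. The difference $K(x_1,y)-K(x_2,y)$ splits into two parts.
\begin{itemize}
\item The factor $|x-y|^{-n}$ varies with $F$ frozen. This part is harmless.
\item The factor $F$ varies. This part is bounded by $\|F'\|_\infty$ times
$$\Big|\frac{B(x_1)-B(y)}{|x_1-y|}-\frac{B(x_2)-B(y)}{|x_2-y|}\Big|\lesssim \|\nabla B\|_\infty\frac{|x_1-x_2|}{|x_1-y|}.$$
\end{itemize}
To get this, write the difference as $\frac{B(x_1)-B(x_2)}{|x_1-y|}$ plus $(B(x_2)-B(y))\big(|x_1-y|^{-1}-|x_2-y|^{-1}\big)$. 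Both pieces are $O(\|\nabla B\|_\infty|x_1-x_2|/|x_1-y|)$ when $|x_1-y|>2|x_1-x_2|$. Hence \eqref{Ksmooth} holds with $\delta=1$. The $y$-variable is handled symmetrically.

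\textbf{$L^2$ bound.} This is the main obstacle. Expand $F(t)=\sum_m a_m t^m$, which converges for $|t|\le \|\nabla B\|_\infty$ with some margin. Because $F$ is even, only even $m$ appear, and
$$T_{\Omega,B,F}=\sum_{m \text{ even}} a_m T_{\Omega,B,m}.$$
Each summand has kernel $\Omega(x-y)|x-y|^{-n}\big(\frac{B(x)-B(y)}{|x-y|}\big)^m$. Since $\Omega$ is odd and $m$ is even, the moment condition \eqref{ck} of the corresponding order holds automatically: $\Omega(\theta)\theta^\alpha$ is odd when $|\alpha|=m$ is even. The lower moments vanish as well, since only odd moments survive integration against an odd function. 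We then need
$$\|T_{\Omega,B,m}\|_{L^2\to L^2}\le C^{m}\|\nabla B\|_\infty^m\|\Omega\|_{L^q},$$
with geometric growth in $m$. For this we invoke the classical method of rotations. Because $\Omega$ is odd, we write
$$T_{\Omega,B,m}f(x)=\tfrac12\int_{\mathbb{S}^{n-1}}\Omega(\theta)\,C_{\theta,m}f(x)\,d\sigma(\theta),$$
where $C_{\theta,m}$ is the one-dimensional $m$-th Calder\'on commutator along lines in direction $\theta$. By Coifman--McIntosh--Meyer, $\|C_{\theta,m}\|_{L^2\to L^2}\lesssim (1+m)^{4}\|\nabla B\|_\infty^m$; the line-restricted $B$ is Lipschitz with constant at most $\|\nabla B\|_\infty$. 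Integrating over $\theta$ gives the $L^2$ bound with $\|\Omega\|_{L^1}\le\|\Omega\|_{L^q}$.

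\textbf{Summing the series.} We have
$$\sum_m |a_m|(1+m)^4\|\nabla B\|_\infty^m<\infty,$$
because $F$ is analytic on a slightly larger disc. The sum therefore defines $C_{B,F}$.

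\textbf{Conclusion.} The principal value is well defined for $f\in C_c^\infty$, by the same series argument or by the oddness of $\Omega$. Hence all hypotheses of Corollary \ref{cor1} are met, with the constant $C$ in \eqref{Ksize}, \eqref{Ksmooth} depending only on $n$, $B$, $F$. Parts (1)--(3) follow directly from \eqref{Cor1}, \eqref{wCor1} and \eqref{cor2s}, after multiplying by $C_{B,F}$.

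The main obstacle is obtaining the polynomial-in-$m$ $L^2$ bounds, which is why we rely on the method of rotations combined with Coifman--McIntosh--Meyer.
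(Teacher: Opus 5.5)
\textbf{Overall structure.} Your plan matches the paper's: write $T_{\Omega,B,F}$ in the form \eqref{TO}, verify \eqref{Ksize}, \eqref{Ksmooth} and $L^2$ boundedness, then invoke Corollary \ref{cor1}. Your direct check of size and smoothness is correct, with $\delta=1$ and constants $\sup|F|$ and $\sup|F|+\|\nabla B\|_\infty\sup|F'|$ over $[-M,M]$, $M=\|\nabla B\|_\infty$; the paper simply quotes Ding--Lai for this. The difference is the $L^2$ bound. The paper takes it from Calder\'on, while you derive it from the method of rotations, the Coifman--McIntosh--Meyer polynomial bounds and the Maclaurin series of $F$. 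This derivation has a gap.

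\textbf{The gap.} Your step ``$F(t)=\sum_m a_m t^m$ converges for $|t|\le M$ with some margin'' does not follow from the hypothesis. Here $t$ is real, and real analyticity on the interval $[-M,M]$ only gives local power series. The Maclaurin series converges only up to the nearest complex singularity, which can be much closer to $0$ than $M$. For example, $F(t)=(1+t^2)^{-1}$ is even and real analytic on all of $\mathbb{R}$, but its series has radius $1$. For $M\ge 1$ one then has $\sum_m|a_m|(1+m)^4M^m=\infty$, and $\sum_m a_m T_{\Omega,B,m}$ is not even defined. So your argument proves the $L^2$ bound only when $F$ is holomorphic on a complex disc of radius $>M$, equivalently on a neighbourhood of the closed complex disc of radius $M$, where compactness gives the margin. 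Under that reading your proof is complete.

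\textbf{How to repair it.} To cover the stated hypothesis, use Cauchy's formula instead of the power series. Extend $F$ holomorphically to a neighbourhood $U$ of $[-M,M]$ and take a contour $\Gamma\subset U\setminus[-M,M]$ around $[-M,M]$. Then the one-dimensional kernels produced by your rotation step satisfy
\[
\frac{1}{x-y}F\Big(\frac{A(x)-A(y)}{x-y}\Big)=\frac{1}{2\pi i}\oint_\Gamma\frac{F(z)\,dz}{(zx-A(x))-(zy-A(y))},
\]
where $A$ is $B$ restricted to a line. Since $|z(x-y)-(A(x)-A(y))|\ge\operatorname{dist}(z,[-M,M])\,|x-y|$, the curve $x\mapsto zx-A(x)$ becomes, after a rotation, a Lipschitz graph. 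Its constant is at most $(|z|+M)/\operatorname{dist}(z,[-M,M])$, uniformly for $z\in\Gamma$. The Cauchy integral theorem on Lipschitz graphs (Coifman--McIntosh--Meyer) then gives $L^2$ bounds uniform in $z\in\Gamma$. Integrate over $\Gamma$, then over $\theta\in\mathbb{S}^{n-1}$ using oddness of $\Omega$ and evenness of $F$, as you planned. This yields $\|T_{\Omega,B,F}\|_{L^2\to L^2}\le C_{B,F}\|\Omega\|_{L^1}$. This is the standard Coifman--David--Meyer reduction, and it needs no growth control of the commutator norms. Alternatively, cite the $L^2$ result as the paper does.

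\textbf{A minor point.} For merely Lipschitz $B$, oddness of $\Omega$ does not make the principal value converge at every point. The principal value exists almost everywhere, which follows from uniform bounds on the truncations together with density. The paper builds this into the hypotheses of Corollary \ref{cor1}.
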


\subsubsection{Baj\v{s}anski-Coifman Type Calder\'{o}n Commutator}

In 1967, Baj\v{s}anski and Coifman \cite{Bajsan} introduced a higher-order analogue of the Calder\'{o}n commutator.
For a multi-index $\alpha\in \mathbb{Z}^n_+$, denote $B_{\alpha}(x)=\partial^{\alpha}_x B(x)$ and define
$$
P_l(B,x,y)=B(x)-\sum_{|\alpha|<l}\frac{B_{\alpha}(y)}{\alpha !}(x-y)^{\alpha},
$$
where $l\in \mathbb{N}$.
The singular integral operator $T_{\Omega,B}^{l}$ is then defined by
\begin{equation*}
  T_{\Omega, B}^{l}f(x)=\text{p.v.} \int_{\mathbb{R}^n}\frac{\Omega(x-y)}{|x-y|^n}\frac{P_l(B,x,y)}{|x-y|^l}
f(y)dy,
\end{equation*}
where $\Omega\in L^1(\mathbb{S}^{n-1})$ satisfies \eqref{a} and
\begin{equation}\label{cl}
\int_{\mathbb{S}^{n-1}} \Omega(\theta)\theta^{\alpha} d\theta=0, \text{ for all } \alpha \in \mathbb{Z}^n_+ \text{ with } |\alpha|=l.
\end{equation}

\begin{theorem}\label{Tbpl}
Let $l\in \mathbb{N}$ and assume $B_{\alpha}\in L^{\infty}(\mathbb{R}^n)$ for all $|\alpha|=l$.
Let $1<q\leq \infty$ and $\Omega\in L^q(\mathbb{S}^{n-1})$ satisfy \eqref{a} and \eqref{cl}.
Then the following hold:
\begin{enumerate}
\item For any $1 < p < \infty$ and $w\in A_p$, we have
\begin{align*}
\| T_{\Omega, B}^lf\|_{L^p(w)} \lesssim \sum_{|\alpha|=l} \|B_{\alpha}\|_{\infty} [w]_{A_p}^{\frac{1}{p}}([w]_{A_{\infty}}^{\frac{1}{p'}}+[w^{\frac{1}{1-p}}]_{A_{\infty}}^{\frac{1}{p}})\min\{[w]_{A_{\infty}},[w^{\frac{1}{1-p}}]_{A_\infty}\}
\|\Omega\|_{L^\infty}\|f\|_{L^p(w)}.
\end{align*}
Moreover, the following weak type estimate holds:
\begin{align*}
  \|T_{\Omega, B}^lf\|_{L^{p,\infty}(w)}\lesssim \sum_{|\alpha|=l} \|B_{\alpha}\|_{\infty} [w]_{A_p}^{\min\{2, {p'}\}}
  \|\Omega\|_{L^\infty} \|f\|_{L^p(w)}.
\end{align*}
\item For $p=1$ and $w\in A_1$, we have
\begin{align*}
  \|T_{\Omega, B}^lf\|_{L^{1,\infty}(w)}\lesssim \sum_{|\alpha|=l} \|B_{\alpha}\|_{\infty} [w]_{A_1} [w]_{A_\infty} \log_2([w]_{A_\infty}+1)
  \|\Omega\|_{L^\infty}\|f\|_{L^1(w)}.
\end{align*}
\item For all $1 \leq s < p$ and $w\in A_s$, we have
\begin{align*}
\| T_{\Omega, B}^lf\|_{L^p(w)} \lesssim \sum_{|\alpha|=l} \|B_{\alpha}\|_{\infty} [w]_{A_s}\|f\|_{L^p(w)}
\begin{cases}
\|\Omega\|_{L^{q,1}\log L}, & 1<q<\infty, p >q',\\
\|\Omega\|_{L^\infty}, & 1<p<\infty.
\end{cases}
\end{align*}
\end{enumerate}
\end{theorem}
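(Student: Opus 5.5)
The plan is to deduce Theorem \ref{Tbpl} from Corollary \ref{cor1}. To do so, we write $T_{\Omega,B}^l$ in the form \eqref{TO} with
\[
K(x,y)=\frac{P_l(B,x,y)}{|x-y|^{n+l}},
\]
normalised by $\sum_{|\alpha|=l}\|B_\alpha\|_\infty$. Three things must then be checked: the size bound \eqref{Ksize}, the regularity bounds \eqref{Ksmooth}, and $L^2$ boundedness of $T_{\Omega,B}^l$ with norm $\lesssim \sum_{|\alpha|=l}\|B_\alpha\|_\infty\,\|\Omega\|_{L^q}$. Once these hold, all three parts follow verbatim from Corollary \ref{cor1}. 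The transposed operator has kernel $\Omega(y-x)K(y,x)$ and is covered in the same way, as remarked before the corollary.

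For the size condition we use Taylor's formula with integral remainder. Since $P_l(B,x,y)$ is the $l$-th order remainder of $B$ expanded at $y$,
\[
|P_l(B,x,y)|\lesssim \sum_{|\alpha|=l}\|B_\alpha\|_\infty |x-y|^l,
\]
which gives \eqref{Ksize}.

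For regularity in $x$, with $|x_1-y|>2|x_1-x_2|$, we split
\[
K(x_1,y)-K(x_2,y)=\frac{P_l(B,x_1,y)-P_l(B,x_2,y)}{|x_1-y|^{n+l}}+P_l(B,x_2,y)\Big(\frac{1}{|x_1-y|^{n+l}}-\frac{1}{|x_2-y|^{n+l}}\Big).
\]
The second term is handled by the mean value theorem together with the size bound, and gives $|x_1-x_2|/|x_1-y|^{n+1}$. For the first term, $x\mapsto P_l(B,x,y)$ has $\nabla_x P_l(B,x,y)=P_{l-1}(\nabla B,x,y)$, which is $O(\|B_\alpha\|_\infty|x-y|^{l-1})$ along the segment. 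This again gives $\delta=1$.

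Regularity in $y$ is the delicate point. The derivative $\partial_{y}P_l(B,x,y)$ telescopes to $-\sum_{|\alpha|=l-1}\frac{\partial_{y}B_\alpha(y)}{\alpha!}(x-y)^\alpha$, which involves only the $l$-th derivatives and so is bounded by $\|B_\alpha\|_\infty|x-y|^{l-1}$. Since the $B_\alpha$ with $|\alpha|=l$ are only $L^\infty$, this derivative identity should be justified by approximating $B$ with smooth functions. Then the same splitting as above gives \eqref{Ksmooth} with $\delta=1$.

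The main obstacle is the $L^2$ boundedness with the stated dependence on $\Omega\in L^q$, where only \eqref{cl} is assumed. I would not reprove this from scratch. Instead I would invoke the known $L^2$ theory for the Baj\v{s}anski--Coifman commutator with rough kernels, via the method of rotations and Hofmann/Ding--Lai type arguments in \cite{Bajsan, DL}, which gives $L^2$ bounds under exactly the moment condition \eqref{cl} with constant $\sum\|B_\alpha\|_\infty\|\Omega\|_{L^q}$. If a direct argument is needed, the approach would be to expand $P_l(B,x,y)$ via the Taylor integral remainder in terms of $B_\alpha(y+t(x-y))$. This reduces to Calder\'on-type commutators along rays, which are bounded by the rotation method. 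The condition \eqref{cl} is used to remove the non-integrable part coming from the degree-$l$ homogeneous polynomial term.

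The cancellation hypothesis of Theorem \ref{Thm} is then satisfied, and Corollary \ref{cor1} yields (1)--(3).
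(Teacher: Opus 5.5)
Your proposal is correct and follows the paper's route: you reduce to Corollary \ref{cor1} by citing the Baj\v{s}anski--Coifman $L^2$ bound, and you verify \eqref{Ksize} and \eqref{Ksmooth} with $\delta=1$. The only difference is that you carry out the Taylor-remainder and gradient computations for $P_l(B,x,y)$ yourself, including the telescoping identity for $\nabla_y P_l$ and the normalisation by $\sum_{|\alpha|=l}\|B_\alpha\|_\infty$. The paper instead simply quotes \cite[Subsection 5.4]{DL} for these kernel estimates.
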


\subsubsection{Muckenhoupt Type General Singular Integral}
In 1960, Muckenhoupt \cite{Mucken} considered a modification of singular integrals, generalizing the earlier work of Calder\'{o}n and Zygmund \cite{CZ52, CZ56} on fractional integrals.

Assume that $\Omega\in L^1(\mathbb{S}^{n-1})$ satisfies \eqref{a} and \eqref{c}.
For $r\in \mathbb{R}\backslash\{0\}$, define
\begin{equation*}
T_{\Omega, ir}f(x)=\text{p.v.} \int_{\mathbb{R}^n} \frac{\Omega(x-y)}{|x-y|^{n+i r}} f(y)dy,
\end{equation*}
where $i=\sqrt{-1}$.

\begin{theorem}\label{ir1}
Let $\Omega\in L^1(\mathbb{S}^{n-1})$ satisfy \eqref{a} and \eqref{c}.
Then the following hold:
\begin{enumerate}
\item For any $1 < p < \infty$ and $w\in A_p$, we have
\begin{align*}
\| T_{\Omega, i r}f\|_{L^p(w)} \lesssim [w]_{A_p}^{\frac{1}{p}}([w]_{A_{\infty}}^{\frac{1}{p'}}+[w^{\frac{1}{1-p}}]_{A_{\infty}}^{\frac{1}{p}})\min\{[w]_{A_{\infty}},[w^{\frac{1}{1-p}}]_{A_\infty}\}\|\Omega\|_{L^\infty}\|f\|_{L^p(w)}.
\end{align*}
Moreover, the following weak type estimate holds:
\begin{align*}
  \|T_{\Omega, i r}f\|_{L^{p,\infty}(w)}\lesssim [w]_{A_p}^{\min\{2, {p'}\}}\|\Omega\|_{L^\infty} \|f\|_{L^p(w)}.
\end{align*}
\item For $p=1$ and $w\in A_1$, we have
\begin{align*}
  \|T_{\Omega, i r}f\|_{L^{1,\infty}(w)}\lesssim [w]_{A_1} [w]_{A_\infty} \log_2([w]_{A_\infty}+1)\|\Omega\|_{L^\infty}\|f\|_{L^1(w)}.
\end{align*}
\item For all $1 \leq s < p$ and $w\in A_s$, we have
\begin{align*}
\| T_{\Omega, i r}f\|_{L^p(w)} \lesssim [w]_{A_s}\|f\|_{L^p(w)}
\begin{cases}
\|\Omega\|_{L^{q,1}\log L}, & 1<q<\infty, p >q',\\
\|\Omega\|_{L^\infty}, & 1<p<\infty.
\end{cases}
\end{align*}
\end{enumerate}
\end{theorem}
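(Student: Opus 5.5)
The plan is to realize $T_{\Omega,ir}$ as an instance of the operator $T_\Omega$ in \eqref{TO} and then apply Corollary \ref{cor1} directly. Write
\[
\frac{\Omega(x-y)}{|x-y|^{n+ir}}=\Omega(x-y)K(x,y),\qquad K(x,y)=|x-y|^{-n-ir}.
\]
The size condition \eqref{Ksize} holds with $C=1$, since $\big||x-y|^{-ir}\big|=1$.

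For the regularity conditions \eqref{Ksmooth}, set $h(z)=|z|^{-n-ir}$. Then $|\nabla h(z)|\le (n+|r|)|z|^{-n-1}$. If $|x_1-y|>2|x_1-x_2|$, every point $z$ on the segment between $x_1-y$ and $x_2-y$ satisfies $|z|\ge |x_1-y|/2$. The mean value theorem therefore gives
\[
|K(x_1,y)-K(x_2,y)|\lesssim (n+|r|)\frac{|x_1-x_2|}{|x_1-y|^{n+1}}.
\]
This is \eqref{Ksmooth} with $\delta=1$ and constant depending on $r$. The estimate in the $y$ variable is symmetric.

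The remaining hypothesis of Corollary \ref{cor1} is that $T_{\Omega,ir}$ is well defined on $C_c^\infty$ and bounded on $L^2$ with norm $\lesssim\|\Omega\|_{L^q}$; this is where I expect the real work. The kernel $\Omega(x')|x|^{-n-ir}$ is homogeneous of degree $-n-ir$, a complex degree with real part $-n$.

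Existence of the principal value for $f\in C_c^\infty$ follows from \eqref{c}. In polar coordinates the truncated integral over $\varepsilon<|x-y|<1$ equals
\[
\int_\varepsilon^1 t^{-1-ir}\int_{\mathbb{S}^{n-1}}\Omega(\theta)\big(f(x-t\theta)-f(x)\big)\,d\theta\,dt,
\]
which converges absolutely because the inner integral is $O(t)$. The tail $|x-y|\ge1$ is also fine, since $f$ has compact support.

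For the $L^2$ bound I would compute the Fourier multiplier $m(\xi)=\lim_{\varepsilon\to0,\,R\to\infty}\int_{\varepsilon<|x|<R}\Omega(x')|x|^{-n-ir}e^{-2\pi i x\cdot\xi}\,dx$. By homogeneity, $m(\xi)=|\xi|^{ir}m(\xi')$. Integrating in the radial variable first, as in the Calder\'on--Zygmund method of rotations computation, gives
\[
m(\xi')=\int_{\mathbb{S}^{n-1}}\Omega(\theta)\,\Phi_r(\theta\cdot\xi')\,d\sigma(\theta),
\]
where $\Phi_r(s)=\int_0^\infty t^{-1-ir}\big(e^{-2\pi i ts}-\chi_{(0,1)}(t)\big)\,dt$ and the subtracted term $\chi_{(0,1)}(t)$ integrates to zero against $\Omega$ by \eqref{c}. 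Evaluating, $\Phi_r(s)=c_r|s|^{ir}\,\mathrm{sgn}$-factor $+\,c_r'$, with constants finite for $r\neq0$. The point is that the logarithmic singularity $\log(1/|s|)$ of the case $r=0$ is replaced by a bounded function. Hence
\[
\|m\|_{L^\infty}\lesssim_r\|\Omega\|_{L^1}\le\|\Omega\|_{L^q}.
\]
Uniform $L^2$ bounds for the truncations follow the same way. This explains why the theorem needs only $\Omega\in L^1$, with the norms on the right being finite whenever they are used.

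With all hypotheses of Corollary \ref{cor1} verified, parts (1)--(3) are exactly its three conclusions \eqref{Cor1}, \eqref{wCor1}, \eqref{cor2s}, with implicit constants depending on $r$ and $n$. The main obstacle is the bounded multiplier computation, specifically justifying the interchange of limits in $\varepsilon$ and $R$ and the boundedness of $\Phi_r$ near $s=0$. At that step I would first fall back on Muckenhoupt's original $L^2$ theorem \cite{Mucken} if a self-contained derivation becomes delicate.
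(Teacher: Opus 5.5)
Your proposal is correct and follows essentially the same route as the paper: write $T_{\Omega,ir}$ as $T_\Omega$ with $K(x,y)=|x-y|^{-n-ir}$, check \eqref{Ksize} and \eqref{Ksmooth} (with $\delta=1$) by the mean value theorem, supply the $L^2$ bound, and invoke Corollary \ref{cor1}. The only difference is that the paper does not compute the multiplier but simply quotes Muckenhoupt's Theorem 8 for $\|T_{\Omega,ir}f\|_2\lesssim\|\Omega\|_{L^1}\|f\|_2$, which is exactly your stated fallback; your sketch of the bounded function $\Phi_r$ is sound in outline but not needed.
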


\bigskip
The paper is organized as follows.
Section \ref{L2} recalls the sparse domination principle and the definition of $A_p$ weights.
Section \ref{w(1,1)} is devoted to the proof of our sparse domination criterion, Theorem \ref{Thm}.
In Section \ref{sec-pfTb}, we provide the proofs of all results stated in the Applications subsection of the Introduction.

\bigskip
\noindent
{\bf Notation.}
Throughout this paper, $\mathscr{F}$ and $\mathscr{F}^{-1}$ (also denoted by $\widehat{f}$ for $\mathscr{F}f$) stand for the Fourier transform and its inverse, respectively.
For $p\in (1,\infty)$, we write $p' = p/(p-1)$ for its conjugate exponent, with the usual conventions $1'=\infty$ and $\infty'=1$.
We use the notation $X\lesssim Y$ to indicate that $X\le CY$ for some constant $C>0$ independent of the essential parameters on which $X$ and $Y$ depend.

\bigskip
\section{Preliminaries}\label{L2}
\setcounter{equation}{0}

In this section, we collect several definitions and lemmas that will be used in the proof of our main results.

\subsection{Sparse Domination Principle}

Let $1<r<\infty$.
Suppose that $\mathscr{T}$ is a bounded bilinear form on $L^r(\mathbb{R}^n) \times L^{r^{\prime}}(\mathbb{R}^n)$, associated with a kernel $\mathscr{K}(x,y)$ defined off the diagonal $\{(x,y)\in \mathbb{R}^n \times \mathbb{R}^n: x=y\}$.
More precisely, whenever $f_1 \in L^r(\mathbb{R}^n)$ and $f_2\in L^{r^{\prime}}(\mathbb{R}^n)$ have compact and disjoint supports,
$$
\mathscr{T}(f_1, f_2)=\int_{\mathbb{R}^n} \int_{\mathbb{R}^n}\mathscr{K}(x,y) f_1(y)  f_2(x)\,dy \, dx
$$
and the integral converges absolutely.

Assume that there exists $1<q \leq \infty$ such that the kernel $\mathscr{K}(x,y)$ admits a decomposition
\begin{equation*}
  \mathscr{K}(x,y)=\sum_{j\in\mathbb{Z}} \mathscr{K}_j(x,y)
\end{equation*}
where $\mathscr{K}_j$ satisfies
\begin{equation}\label{SS}\tag{SS}
\begin{aligned}
&\supp \mathscr{K}_j \subset\{(x,y) \in \mathbb{R}^n \times \mathbb{R}^n: 2^{j-2}<|x-y|<2^j \},\\
&[\mathscr{K}]_{0,q}:=\sup_{j\in\mathbb{Z}} 2^{\frac{j n}{q^{\prime}}} \sup_{x\in\mathbb{R}^n}(\|\mathscr{K}_j(x, x+\cdot)\|_q+\|\mathscr{K}_j(x+\cdot, x)\|_q)<\infty.
\end{aligned}
\end{equation}

Furthermore, we define the truncated forms associated with the above decomposition by
\begin{equation*}
  \mathscr{T}_\mu^\nu(h_1,h_2):=\int_{\mathbb{R}^n} \int_{\mathbb{R}^n} \sum_{\mu<j\leq \nu}\mathscr{K}_j(x,y)h_1(y)h_2(x)\,dy\,dx, \quad \mu,\nu\in\mathbb{Z} \cup\{-\infty,\infty\}.
\end{equation*}
We assume that these truncated forms satisfy the uniform boundedness condition
\begin{equation}\label{T}\tag{T}
  \quad C_{\mathscr{T}}(r)=:\sup_{\mu<\nu} (\|\mathscr{T}_\mu^\nu\|_{L^r(\mathbb{R}^n) \times L^{r^{\prime}}(\mathbb{R}^n) \rightarrow \mathbb{C}})<\infty.
\end{equation}

Throughout this article, by \emph{dyadic cubes} we mean the elements of a fixed dyadic lattice $\mathcal{D}$ in $\mathbb{R}^n$. The following definitions and notation concern local norms associated with stopping collections of dyadic cubes.

\begin{definition}[\cite{CCDO} Stopping collections]
Let $Q \in \mathcal{D}$ be a fixed dyadic cube in $\mathbb{R}^n$.
A collection $\mathscr{Q} \subset \mathcal{D}$ of dyadic cubes is called a \emph{stopping collection with top $Q$} if the following conditions are satisfied:

(1)The cubes in $\mathscr{Q}$ are pairwise disjoint and contained in $3Q$:
   \[
   L,L'\in \mathscr{Q},  L\cap L'\neq \emptyset \Rightarrow L=L',
   \qquad L\in \mathscr{Q} \Rightarrow L\subset 3Q;
   \]

(2)The cubes in $\mathscr{Q}$  enjoy the further separation properties:
$$
\begin{aligned}
& L,L^{\prime}\in\mathscr{Q}, |j_L-j_{L^{\prime}}|\geq 8 \Rightarrow 7L\cap 7L'=\emptyset, \\
& \bigcup_{L \in \mathscr{Q}: 3L\cap 2Q\neq\emptyset}9L\subset\bigcup_{L\in \mathscr{Q}} L=: \operatorname{sh}\mathscr{Q}.
\end{aligned}
$$
Here $j_L=\log_2 (\ell(L))$. The notation $\operatorname{sh}\mathscr{Q}$ for the union of the cubes in $\mathscr{Q}$ will also be used below.
\end{definition}

\begin{definition}[\cite{CCDO} Localized function spaces]\label{def:local-spaces}Let $1 \leq p \leq \infty$.
We define $\mathcal{Y}_p(\mathscr{Q})$ as the subspace of $L^p(\mathbb{R}^n)$ consisting of functions supported in $3Q$ with finite $\mathcal{Y}_p$-norm:
\[
\mathcal{Y}_p(\mathscr{Q}) := \{ h \in L^p(\mathbb{R}^n) : \supp h \subset 3Q, \|h\|_{\mathcal{Y}_p(\mathscr{Q})} < \infty \},
\]
where
\[
\|h\|_{\mathcal{Y}_p(\mathscr{Q})} :=
\begin{cases}
\displaystyle \max \{ \| h \chi_{\mathbb{R}^n \setminus \operatorname{sh}\mathscr{Q}} \|_\infty,
\sup_{L \in \mathscr{Q}} \inf_{x \in \widehat{L}} M_p h(x) \}, & p<\infty,\\[1mm]
\|h\|_\infty, & p=\infty.
\end{cases}
\]
Here $\widehat{L}$ denotes the (non-dyadic) $2^5$-fold dilate of $L$, and $M_p$ is the $p$-Hardy--Littlewood maximal function.
We next define $\mathcal{X}_p(\mathscr{Q}) \subset \mathcal{Y}_p(\mathscr{Q})$ as the subspace of functions that admit a decomposition into local blocks:
\[
\mathcal{X}_p(\mathscr{Q}) := \{ b \in \mathcal{Y}_p(\mathscr{Q}) : b = \sum_{L \in \mathscr{Q}} b_L, \ \supp b_L \subset L \}.
\]
Finally, the subspace of \emph{mean-zero local blocks} is defined by
\[
\dot{\mathcal{X}}_p(\mathscr{Q}) := \{ b \in \mathcal{X}_p(\mathscr{Q}) : \int_L b_L = 0, \ \forall L \in \mathscr{Q} \}.
\]
\end{definition}

For $b \in \mathcal{X}_p(\mathscr{Q})$ (resp. $b \in \dot{\mathcal{X}}_p(\mathscr{Q})$), we write $\|b\|_{\mathcal{X}_p(\mathscr{Q})}$ (resp. $\|\cdot\|_{\dot{\mathcal{X}}_p(\mathscr{Q})}$) to denote $\|b\|_{\mathcal{Y}_p(\mathscr{Q})}$. When the stopping collection $\mathscr{Q}$ is clear from the context, the subscript may be omitted. For the Calder\'{o}n-Zygmund decomposition associated to stopping collections, see \cite[Remark 2.2]{CCDO}.

Before stating the sparse domination principle, we introduce some notation for the localized truncated forms associated to the decomposition in \eqref{SS} as follows:
\begin{align*}
\mathscr{T}_{Q, \mu, \nu}(h_1, h_2) &:= \mathscr{T}_\mu^{\min \{j_Q,\nu\}}(h_1 \chi_Q, h_2)
= \mathscr{T}_\mu^{\min \{j_Q, \nu\}}(h_1 \chi_Q, h_2 \chi_{3 Q}),\\
\mathscr{T}_{\mathscr{Q},\mu,\nu}(h_1, h_2) &:= \mathscr{T}_{\mu}^{\nu}(h_1, h_2) - \sum_{\substack{L \in \mathscr{Q} \\ L \subset Q}} \mathscr{T}_{L, \mu,\nu}(h_1, h_2)=\mathscr{T}_{\mathscr{Q},\mu,\nu}(h_1\chi_Q, h_2\chi_{3Q}).
\end{align*}
The last equalities in the above definitions follow from the support property of $\mathscr{K}_j(x,y)$ in \eqref{SS}.

The following lemma provides a single-scale localized estimate.
\begin{lemma}\cite{CCDO}\label{local-e}
  Let $1<q\leq \infty$. Then, for all $k\geq1$, we have
  $$
  \sum_j\int|\mathscr{K}_j(x,y)||b_{j-k}(y)||h(x)|\,dy\,dx\lesssim [\mathscr{K}]_{0,q}|Q|\|b\|_{\mathcal{X}_1}\|h\|_{\mathcal{Y}_{q'}},
  $$
  where
$$
b_j=\sum_{L \in \mathscr{Q}, j_L=j} b_L.
$$
\end{lemma}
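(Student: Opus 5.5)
The plan is a pointwise Fubini argument that exploits only the support property in \eqref{SS} and the $L^q$ size control encoded in $[\mathscr{K}]_{0,q}$; no cancellation of $b_L$ is used. Fix $j$ and write $b_{j-k}=\sum_{L\in\mathscr{Q},\,j_L=j-k}b_L$. Since the cubes of $\mathscr{Q}$ are pairwise disjoint, the quantity to bound is
\[
\sum_j\sum_{L:\,j_L=j-k}\int\!\!\int|\mathscr{K}_j(x,y)||b_L(y)||h(x)|\,dy\,dx .
\]
For fixed $L$ with $j_L=j-k$ and $y\in L$, the support condition in \eqref{SS} forces $|x-y|<2^j$. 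Hence $x$ ranges over a cube $\widetilde L$ concentric with $L$ of side comparable to $2^j=2^{k}\ell(L)$. Moreover, since $h\in\mathcal{Y}_{q'}$ is supported in $3Q$ and $L\subset 3Q$, only $j$ with $2^{j}$ not much larger than $\ell(Q)$ contribute nontrivially, up to a harmless enlargement.

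The core step is to bound the inner $x$-integral with Hölder's inequality. For fixed $y\in L$,
\[
\int|\mathscr{K}_j(x,y)||h(x)|\,dx\le \|\mathscr{K}_j(\cdot+y,y)\|_{q}\,\|h\chi_{B(y,2^j)}\|_{q'}\le [\mathscr{K}]_{0,q}\,2^{-jn/q'}\,\|h\chi_{B(y,2^j)}\|_{q'} .
\]
Since $B(y,2^j)$ is contained in a cube of side $\sim 2^j$ containing $\widehat L$ (recall $k\ge1$), we have
\[
\|h\chi_{B(y,2^j)}\|_{q'}\lesssim 2^{jn/q'}\inf_{\widehat L}M_{q'}h\le 2^{jn/q'}\|h\|_{\mathcal{Y}_{q'}} .
\]
For $q'=\infty$ one uses $\|h\|_\infty$ directly. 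Combining these, the $L$-term is at most
\[
[\mathscr{K}]_{0,q}\,\|h\|_{\mathcal{Y}_{q'}}\,\|b_L\|_1\le [\mathscr{K}]_{0,q}\,\|h\|_{\mathcal{Y}_{q'}}\,|L|\inf_{\widehat L}M_1 b\le [\mathscr{K}]_{0,q}\,\|h\|_{\mathcal{Y}_{q'}}\,\|b\|_{\mathcal{X}_1}\,|L| ,
\]
where the last inequality is the definition of the $\mathcal{X}_1$ norm. One could run the Hölder step in $y$ instead, but this ordering keeps the $h$-side estimate uniform.

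Finally we sum. Each $L\in\mathscr{Q}$ appears for exactly one $j$, namely $j=j_L+k$. Therefore the total is bounded by
\[
[\mathscr{K}]_{0,q}\,\|h\|_{\mathcal{Y}_{q'}}\,\|b\|_{\mathcal{X}_1}\sum_{L\in\mathscr{Q}}|L|\le 3^n|Q|\,[\mathscr{K}]_{0,q}\,\|b\|_{\mathcal{X}_1}\|h\|_{\mathcal{Y}_{q'}} ,
\]
since the cubes are disjoint and lie in $3Q$.

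The only delicate point is the step bounding $\|h\chi_{B(y,2^j)}\|_{q'}$ by $\inf_{\widehat L}M_{q'}h$. This needs the ball around $y$ of radius $2^j\ge 2\ell(L)$ to contain a point of $\widehat L$, which holds trivially because $y\in L\subset\widehat L$. It also needs the comparison of the ball's measure with $2^{jn}$, so that the averaging constant is dimensional. No separation property of $\mathscr{Q}$ is needed beyond disjointness, and no cancellation of $b_L$ enters.
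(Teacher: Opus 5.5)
Your proof is correct and follows the standard argument of \cite{CCDO}, which the paper itself reproduces in \eqref{O1}--\eqref{qq'}. The steps are: H\"{o}lder in $x$ against $\|\mathscr{K}_j(\cdot+y,y)\|_q\le[\mathscr{K}]_{0,q}2^{-jn/q'}$; domination of $\|h\chi_{B(y,2^j)}\|_{q'}$ by $2^{jn/q'}\inf_{\widehat L}M_{q'}h$; then $\|b_L\|_1\lesssim|L|\|b\|_{\mathcal{X}_1}$ and disjointness of the cubes $L\subset 3Q$.

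One small slip is in your closing remark. The infimum over $\widehat L$ is available because a cube of side $\sim 2^j$ contains both $B(y,2^j)$ and all of $\widehat L$, as you correctly said earlier. It is not enough that $B(y,2^j)$ merely meets $\widehat L$. Likewise, $\|b_L\|_1\le 2^{5n}|L|\inf_{\widehat L}M_1 b$ carries a harmless dimensional factor.
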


We now recall the abstract theorem of Conde-Alonso, Di Plinio et al., which is formulated for general non-convolution kernel operators.
\begin{lemma}\cite[Theorem C]{CCDO}\label{thm:C}
  Let $\mathscr{T}$ be a bilinear form satisfying \eqref{SS} and \eqref{T} above.
  Assume that there exist $1\leq p_1, p_2<\infty$ and a constant $C_L>0$ such that the estimates
  \begin{equation}\label{L}\tag{L}
    \begin{aligned}
      |\mathscr{T}_{\mathscr{Q}, \mu, \nu}(b, h)|&\leq C_L |Q|\|b\|_{\mathcal{\dot{X}}_{p_1}(\mathscr{Q})}\|h\|_{\mathcal{Y}_{p_2}(\mathscr{Q})},\\
      |\mathscr{T}_{\mathscr{Q}, \mu, \nu}(h, b)|&\leq C_L |Q|\|h\|_{\mathcal{Y}_{\infty}(\mathscr{Q})}\|b\|_{\mathcal{\dot{X}}_{p_2}(\mathscr{Q})},
    \end{aligned}
  \end{equation}
hold uniformly over all $\mu, \nu \in \mathbb{Z}$, all dyadic lattices $\mathcal{D}$, all $Q\in \mathcal{D}$ and all stopping collections $\mathscr{Q}\subset \mathcal{D}$ with top $Q$.
Then the estimate
\begin{equation*}
  \sup_{\mu, \nu \in \mathbb{Z}} |\mathscr{T}_{\mu}^{\nu}(f_1, f_2)|\lesssim [C_{\mathscr{T}}(r)+C_L]\sup_{\mathcal{S}}\sum_{Q\in\mathcal{S}}|Q|\Big(\frac{1}{|Q|}\int_Q|f_1|^{p_1}\Big)^{\frac{1}{p_1}}\Big(\frac{1}{|Q|}\int_Q|f_2|^{p_2}\Big)^{\frac{1}{p_2}},
\end{equation*}
holds for all $f_j\in L^{p_j}(\mathbb{R}^n)$ with compact support, $j=1,2$.
\end{lemma}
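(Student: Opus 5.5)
The plan is the standard iterative stopping-time argument. I would reduce to a single localized estimate and then iterate it over a stopping tree whose tops form a sparse family. Fix $f_1,f_2$ with compact support. By the three-lattice trick it suffices to work in one dyadic lattice $\mathcal D$ with a cube $Q_0\in\mathcal D$ such that $\supp f_1\cup\supp f_2\subset Q_0$. We may also assume $\nu\le j_{Q_0}$, since the terms with $j>j_{Q_0}+C$ vanish or are controlled by (SS). Then $\mathscr T_\mu^\nu(f_1,f_2)=\mathscr T_{Q_0,\mu,\nu}(f_1,f_2)$.

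The core is the following one-step claim. For every $Q\in\mathcal D$ there is a stopping collection $\mathscr Q$ with top $Q$ such that $|\operatorname{sh}\mathscr Q|\le \frac12|Q|$ and
\[
|\mathscr T_{Q,\mu,\nu}(f_1,f_2)|\lesssim [C_{\mathscr T}(r)+C_L]\,|Q|\,\langle f_1\rangle_{p_1,3Q}\langle f_2\rangle_{p_2,3Q}+\sum_{L\in\mathscr Q,\,L\subset Q}|\mathscr T_{L,\mu,\nu}(f_1,f_2)|,
\]
where $\langle f\rangle_{p,R}=(|R|^{-1}\int_R|f|^p)^{1/p}$. Iterating the claim on each $L$ produces a family of tops $\mathcal S$. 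It is sparse: take $E_Q=Q\setminus\operatorname{sh}\mathscr Q$, which gives $|E_Q|\ge \frac12|Q|$. The dilates $3Q$ are absorbed by passing to a bounded number of adjacent lattices. The recursion terminates because of the lower truncation $\mu$: for $j_L\le\mu$ the form $\mathscr T_{L,\mu,\nu}$ vanishes.

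To construct $\mathscr Q$, I would take the maximal dyadic cubes $L\subset 3Q$ on which $M_{p_1}f_1>C\langle f_1\rangle_{p_1,3Q}$ or $M_{p_2}f_2>C\langle f_2\rangle_{p_2,3Q}$. These are then replaced by a Whitney-type refinement of the exceptional set, which enforces the separation conditions $|j_L-j_{L'}|\ge8\Rightarrow 7L\cap7L'=\emptyset$ and the $9L$ covering property. The weak $(p,p)$ bound for $M_p$ gives $|\operatorname{sh}\mathscr Q|\le\frac12|Q|$ for $C$ large. By maximality and the Whitney construction, $\inf_{\widehat L}M_{p_i}f_i\lesssim\langle f_i\rangle_{p_i,3Q}$ for each $L\in\mathscr Q$, and off $\operatorname{sh}\mathscr Q$ the Lebesgue differentiation theorem bounds $|f_i|$. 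Hence $\|f_i\|_{\mathcal Y_{p_i}(\mathscr Q)}\lesssim\langle f_i\rangle_{p_i,3Q}$. Next I would perform the Calder\'on--Zygmund decomposition $f_i=g_i+b_i$ with $b_i=\sum_L(f_i-\langle f_i\rangle_L)\chi_L\in\dot{\mathcal X}_{p_i}(\mathscr Q)$ and $\|g_i\|_{\mathcal Y_\infty}\lesssim\langle f_i\rangle_{p_i,3Q}$. Using $\mathscr T_{Q,\mu,\nu}=\mathscr T_{\mathscr Q,\mu,\nu}+\sum_{L\subset Q}\mathscr T_{L,\mu,\nu}$, I then split
\[
\mathscr T_{\mathscr Q,\mu,\nu}(f_1,f_2)=\mathscr T_{\mathscr Q,\mu,\nu}(b_1,f_2)+\mathscr T_{\mathscr Q,\mu,\nu}(g_1,b_2)+\mathscr T_{\mathscr Q,\mu,\nu}(g_1,g_2).
\]
The first term is bounded by the first line of \eqref{L} with $h=f_2$, and the second by the second line of \eqref{L} with $h=g_1$.

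The main obstacle is the good--good term, which requires $|\mathscr T_{\mathscr Q,\mu,\nu}(g_1,g_2)|\lesssim C_{\mathscr T}(r)\|g_1\|_\infty\|g_2\|_\infty|Q|$. The piece $\mathscr T_{Q,\mu,\nu}(g_1,g_2)$ is handled by \eqref{T} and H\"older, since $g_i\in L^\infty$ is supported in $3Q$. For the subtracted terms $\sum_L\mathscr T_{L,\mu,\nu}(g_1,g_2)$, each one equals $\mathscr T_\mu^{\min\{j_L,\nu\}}(g_1\chi_L,g_2\chi_{3L})$ and is therefore bounded via \eqref{T} by $C_{\mathscr T}(r)\|g_1\|_\infty\|g_2\|_\infty|L|$. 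These bounds sum to $\lesssim|Q|$ because the $3L$ have bounded overlap, which follows from the separation properties. A further point of care is that the truncation of $\mathscr T_{L,\mu,\nu}$ at $\min\{j_L,\nu\}$ matches the scales at which the kernel pieces of (SS) see $L$. The mismatch consists of only boundedly many scales $j$ near $j_L$, and these are controlled by the size condition $[\mathscr K]_{0,q}$ applied to bounded functions.
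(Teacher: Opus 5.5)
The paper does not prove this lemma itself — it quotes it from \cite[Theorem C]{CCDO}. Your outline is essentially the argument given there, and it is correct: a Calder\'on--Zygmund splitting of $f_1\chi_Q$ and $f_2\chi_{3Q}$ relative to a Whitney-type stopping collection built from level sets of $M_{p_i}(f_i\chi_{3Q})$, \eqref{L} for the two bad terms, \eqref{T} for the good--good term, iteration over the cubes $L\in\mathscr{Q}$ with $L\subset Q$ using $E_Q=Q\setminus\operatorname{sh}\mathscr{Q}$, termination via the lower truncation $\mu$, and the three-lattice trick for the $3Q$ averages.

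Two small clean-ups. First, in the initial reduction, rely only on the ``vanish'' option: take $Q_0$ large enough that all scales $j>j_{Q_0}$ vanish outright. \eqref{SS} alone cannot bound even finitely many extra scales by $(p_1,p_2)$-averages, e.g.\ when $p_1=p_2=1$ and $q<\infty$. Second, in the good--good term neither bounded overlap of the $3L$ nor any scale-matching argument is needed. The identity $\mathscr{T}_{Q,\mu,\nu}=\mathscr{T}_{\mathscr{Q},\mu,\nu}+\sum_{L\subset Q}\mathscr{T}_{L,\mu,\nu}$ holds by definition, and $|\mathscr{T}_{L,\mu,\nu}(g_1,g_2)|\le C_{\mathscr{T}}(r)\|g_1\chi_L\|_r\|g_2\chi_{3L}\|_{r'}\lesssim C_{\mathscr{T}}(r)|L|\|g_1\|_\infty\|g_2\|_\infty$ simply sums over the disjoint $L\subset Q$.
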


\subsection{Weights}\label{weights}

For $1<p<\infty$, the Muckenhoupt class $A_p$ consists of nonnegative, locally integrable functions $w$ satisfying
$$
[w]_{A_p}:=\sup_Q \Big(\frac{1}{|Q|}\int_Q w(x)dx\Big)\Big(\frac{1}{|Q|}\int_Q w(x)^{-\frac{1}{p-1}}dx\Big)^{p-1} < \infty,
$$
where the supremum is taken over all cubes $Q\subset \mathbb{R}^n$ with sides parallel to the coordinate axes.

For $p=1$, we say that $w \in A_1$ if there exists $\kappa>0$ such that
$$
Mw(x) \le \kappa w(x) \quad \text{a.e. } x \in \mathbb{R}^n,
$$
where $M$ denotes the Hardy-Littlewood maximal function.
The $A_1$ constant $[w]_{A_1}$ is defined as the infimum of all such $\kappa$ (see \cite{Gra}).

Finally, for $w \in A_\infty := \bigcup_{p \ge 1} A_p$, the $A_\infty$ constant, introduced by Fujii \cite{Fujii} and Wilson \cite{Wilson} is given by
$$
[w]_{A_\infty} := \sup_Q \frac{1}{w(Q)} \int_Q M(w\chi_Q)(x) dx < \infty,
$$
where $w(Q):=\int_Q w(x)dx$, and the supremum is taken over all cubes $Q$ as above.

\bigskip
\section{Proof of Theorem \ref{Thm}}\label{w(1,1)}
\setcounter{equation}{0}
In this section, we provide the proof of Theorem \ref{Thm}.
To this end, we construct a decomposition that meets the assumptions stated in Lemma \ref{thm:C}, namely \eqref{SS}, \eqref{T}, and \eqref{L}.

Let $1<q\leq \infty$ and suppose that $\Omega\in L^q(\mathbb{S}^{n-1})$.
Let $\varphi \in C_c^\infty(\mathbb{R}^n)$ be a radial function satisfying $0\leq \varphi\leq 1$, $\supp \varphi \subset \{2^{-2}\leq |x|\leq 1\}$ and $\sum_{j\in \mathbb{Z}} 2^{-jn}\varphi(2^{-j}x)=\frac{1}{|x|^n}$.
Define
\begin{equation}\label{Kj}
K_j(x,y)=K(x, y)2^{-jn}\varphi(2^{-j}(x-y))|x-y|^n.
\end{equation}
We then decompose the kernel as follows:
\begin{equation}
K_{\Omega}(x, y):=\Omega(x-y)K(x, y)=\sum_{j\in \mathbb{Z}} \Omega(x-y) K_j(x, y)=:\sum_{j\in \mathbb{Z}}  K_{\Omega,j}(x, y).
\end{equation}

We now verify the assumption \eqref{SS}.
Since $\varphi$ is supported in $\{2^{-2}\leq |x|\leq 1\}$, we have
$$
\supp K_{\Omega,j}(x, y) \subset\{(x,y) \in \mathbb{R}^n \times \mathbb{R}^n: 2^{j-2}<|x-y|<2^j  \}.
$$
In addition, by \eqref{Ksize}, for any fixed $x\in \mathbb{R}^n$, we have
\begin{align*}
\| K_{\Omega,j}(x, x+\cdot)\|_q &= \Big(\int_{\mathbb{R}^n} |\Omega(-z)K(x, x+z)2^{-jn}\varphi(2^{-j}(-z))|z|^n  |^q \,dz\Big)^{\frac{1}{q}}\\
&\lesssim \Big(\int_{\mathbb{S}^{n-1}}\int^{\infty}_{0} |\Omega(\theta)2^{-jn}\varphi(2^{-j}(r)) |^q \,  r^{n-1}dr d\theta\Big)^{\frac{1}{q}}\\
& \lesssim 2^{-\frac{jn}{q'}}\|\Omega\|_{L^q}.
\end{align*}
Similarly, $$\|  K_{\Omega,j}( x+\cdot, x)\|_q \lesssim  2^{-\frac{jn}{q'}}\|\Omega\|_{L^q}.$$
Hence, the desired estimate \eqref{SS} follows, with $[K]_{0,q}\lesssim \|\Omega\|_{L^q}$.

We now verify the assumption \eqref{T}.
Under the hypothesis of Theorem \ref{Thm}, it was shown in \cite[Corollary. 5.1]{DL} that
$$
\lambda |\{x\in R^n: |T_{\Omega}f(x)|>\lambda\}|\lesssim   \|\Omega\|_{L^q} \|f\|_1.
$$
Therefore, by the Marcinkiewicz interpolation theorem (see \cite[Theorem 1.3.2]{Gra} ) and duality, we obtain
$$\|T_{\Omega}f\|_r \lesssim \max\{\frac{1}{r-1},r\} \|\Omega\|_{L^q} \|f\|_r.$$
Define the truncated operators $T_{\Omega,\varepsilon}$ by
$$T_{\Omega, \varepsilon} f(x)= \int_{|x-y|>\varepsilon} K_{\Omega,j}(x,y) f(y)\,dy.$$
By \cite[Lemma 1]{LuZhang}, we have
$$\|T_{\Omega, \varepsilon}f\|_r \lesssim \max\{\frac{1}{r-1},r\} \|\Omega\|_{L^q} \|f\|_r.$$
Hence, the desired inequality \eqref{T} follows, with
$$C_{T_{\Omega}}(r)\lesssim \max\{\frac{1}{r-1},r\} \|\Omega\|_{L^q}.$$

Finally, we turn to the verification of the assumption \eqref{L}.
Let $\mathscr{Q}$ be a stopping collection with the top dyadic cube $Q$.
Define
$$
T_j (f)(x):= \int_{\mathbb{R}^n} \Omega(x-y) K_j(x, y) f(y)\,dy.
$$

Let $\{\varepsilon_j\} \in\{-1,0,1\}^{\mathbb{Z}}$ be a sequence of signs, and let $b \in \dot{\mathcal{X}}_1$. Denote
$$b_j=\sum_{L \in \mathscr{Q}, j_L=j} b_L.$$
We claim that the following estimate holds uniformly for all choices of $\{\varepsilon_j\} \in\{-1,0,1\}^{\mathbb{Z}}$:
\begin{equation}\label{forL}
\Big|\sum_{k \geq 1} \sum_{j\in \mathbb{Z}} \varepsilon_j \langle T_j (b_{j-k}), \bar{h}\rangle\Big| \lesssim   p' |Q|\|b\|_{\dot{\mathcal{X}}_1}\|h\|_{\mathcal{Y}_p}
\left\{\begin{array}{lr}
  \|\Omega\|_{L^{q,1}\log L}, &  1<q<\infty, p\geq q',\\
  \|\Omega\|_{L^\infty}, &  q=\infty, p>1.
\end{array}\right.
\end{equation}

The proof of \eqref{forL} will be given in Subsection \ref{sub1}.
We now use this estimate to verify the assumption \eqref{L}.
Denote by $\widetilde{K_{\Omega}}$ the kernel of adjoint operator of $T_{\Omega}$ .
Then
$$\widetilde{K_{\Omega}}(x,y)=\Omega(y-x)K(y,x).$$
Hence, following the argument on \cite[page 1271]{CCDO}, estimate \eqref{forL} implies the first condition in \eqref{L} with $p_1=1, p_2=p$, and the second condition in \eqref{L} with $p_2=p$.

With the assumptions \eqref{SS}, \eqref{T}, and \eqref{L} in hand, Lemma \ref{thm:C} together with \cite[Remark 2.4]{CCDO} yields Theorem \ref{Thm}.

\subsection{Proof of (\ref{forL})}\label{sub1}

In this subsection, we complete the proof of \eqref{forL}.
The argument is technically involved.
We begin by reducing the problem to the case $k \geq 10n$; this ensures that the regularity condition \eqref{Ksmooth} is satisfied (see \eqref{mudi} below).

By Lemma \ref{local-e} with $\mathscr{K}_j=K_{\Omega,j}$, we have, for each fixed $k\geq 1$,
\begin{equation}\label{allk}
\Big|\sum_{j\in \mathbb{Z}} \varepsilon_j \langle T_j (b_{j-k}), \bar{h}\rangle \Big| \lesssim  |Q|\|b\|_{\mathcal{X}_1}\|h\|_{\mathcal{Y}_{q'}}\|\Omega\|_{L^q}.
\end{equation}
Thus we have
\begin{equation}\label{10n}
\Big|\sum_{1\leq k<10 n} \sum_{j\in \mathbb{Z}} \varepsilon_j \langle T_j (b_{j-k}), \bar{h}\rangle \Big| \lesssim  |Q|\|b\|_{\mathcal{X}_1}\|h\|_{\mathcal{Y}_{q'}}\|\Omega\|_{L^q}.
\end{equation}

Define
$$
\mathcal{T}(b, h):=\sum_{k \geq 10 n} \sum_{j\in \mathbb{Z}} \varepsilon_j \langle T_j (b_{j-k}), \bar{h}\rangle.
$$
By \eqref{10n}, in order to prove \eqref{forL}, it suffices to establish the following estimate:
\begin{equation}\label{forL1}
|\mathcal{T}(b, h) | \lesssim   p' |Q|\|b\|_{\dot{\mathcal{X}}_1}\|h\|_{\mathcal{Y}_p}
\left\{\begin{array}{lr}
  \|\Omega\|_{L^{q,1}\log L}, &  1<q<\infty, p\geq q',\\
  \|\Omega\|_{L^\infty}, &  q=\infty, p>1.
\end{array}\right.
\end{equation}

To prove \eqref{forL1}, we smooth the kernel $K_j$ by convolution, obtaining a smooth approximation of the original operator, and the difference between the two can be controlled.

Let $\kappa_{\delta}(k)=[2\delta^{-1}\log_2 k]+10$, where $[a]$ denotes the integer part of $a$.
Let $\phi \in C^{\infty}_c(\mathbb{R}^n)$ be a nonnegative, radial function satisfying $\supp \phi \subset \{x\in \mathbb{R}^n: |x|\leq 1\}$ , and $\int_{\mathbb{R}^n} \phi(x)\,dx=1$.
Set $\phi_m(x)=2^{-mn} \phi(2^{-m}x)$.
Define
\begin{equation}\label{Kkj0}
K^{k}_j(x,y) = \int_{\mathbb{R}^n} \phi_{j-\kappa_{\delta}(k)}(x-z) K_j(z,y)\,dz.
\end{equation}

Since $K_j(z,y)$ is supported in $\{2^{j-2}\leq |z-y|\leq 2^{j}\}$ and $\phi_{j-\kappa_{\delta}(k)}(x)$ is supported in $\{|x|\leq 2^{j-\kappa_{\delta}(k)}\}$, it follows that $K^{k}_j(x,y)$ is supported in $\{2^{j-3}\leq |x-y|\leq 2^{j+1}\}$, and thus
\begin{equation}\label{Kkj}
  |K^{k}_j(x,y)|\lesssim 2^{-jn}\chi_{\{2^{j-3}\leq |x-y|\leq 2^{j+1}\}}.
\end{equation}

Define the operator $T^k_j$ by
$$
T^k_j (f)(x):= \int_{\mathbb{R}^n} \Omega(x-y) K^k_j(x, y) f(y)\,dy,
$$
and set
$$
\tilde{\mathcal{T}}(b, h):=\sum_{k \geq 10 n} \sum_j \varepsilon_j \langle T^k_j (b_{j-k}), \bar{h}\rangle.
$$

The following lemma shows that $\mathcal{T}(b, h)$ can be approximated by $\tilde{\mathcal{T}}(b, h)$ as desired.

\begin{lemma}\label{lemTerr}
Let $1<q\leq \infty$ and $\Omega\in L^q(\mathbb{S}^{n-1})$. With the above notations, we have
$$
|\tilde{\mathcal{T}}(b, h)-\mathcal{T}(b, h)|\lesssim  |Q|\|b\|_{\dot{\mathcal{X}}_1}\|h\|_{\mathcal{Y}_{q'}}\|\Omega\|_{L^q}.
$$
\end{lemma}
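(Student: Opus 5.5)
The plan is to control the difference kernel pointwise, gain a decay factor in $k$ that is summable, and then run the single-scale argument of Lemma \ref{local-e} for each $k$. Write
$$\tilde{\mathcal{T}}(b,h)-\mathcal{T}(b,h)=\sum_{k\ge 10n}\sum_j\varepsilon_j\big\langle (T^k_j-T_j)(b_{j-k}),\bar h\big\rangle .$$
Since $\int\phi_{j-\kappa_\delta(k)}=1$, the difference kernel is
$$\Omega(x-y)\big(K^k_j(x,y)-K_j(x,y)\big)=\Omega(x-y)\int_{\mathbb{R}^n}\phi_{j-\kappa_\delta(k)}(x-z)\big(K_j(z,y)-K_j(x,y)\big)\,dz .$$
In the integral $|x-z|\le 2^{j-\kappa_\delta(k)}$, which is much smaller than $2^{j-3}\le|x-y|$. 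I would therefore use the $x$-regularity in \eqref{Ksmooth}, applicable because $|x-y|>2|x-z|$, together with the smoothness of $\varphi(2^{-j}\cdot)|\cdot|^n$ at scale $2^j$. This gives
$$|K_j(z,y)-K_j(x,y)|\lesssim 2^{-jn}\,2^{-\kappa_\delta(k)\delta}\,\chi_{\{2^{j-3}\le|x-y|\le2^{j+1}\}}(x,y).$$
The product $K(x,y)\cdot 2^{-jn}\varphi(2^{-j}(x-y))|x-y|^n$ is handled by the usual splitting: the first factor contributes through \eqref{Ksmooth}, and the second is bounded by $|x-z|/2^{j}\cdot 2^{-jn}$, using \eqref{Ksize}. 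Since $\kappa_\delta(k)\ge 2\delta^{-1}\log_2 k$, the gain factor satisfies $2^{-\delta\kappa_\delta(k)}\le k^{-2}$.

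The difference kernel $\mathscr{K}^{(k)}_j:=\Omega(x-y)(K^k_j-K_j)$ is supported in the slightly enlarged annulus $2^{j-3}\le |x-y|\le 2^{j+1}$. It obeys the \eqref{SS}-type bound
$$2^{jn/q'}\sup_x\Big(\|\mathscr{K}^{(k)}_j(x,x+\cdot)\|_q+\|\mathscr{K}^{(k)}_j(x+\cdot,x)\|_q\Big)\lesssim k^{-2}\|\Omega\|_{L^q},$$
computed in polar coordinates exactly as in the verification of \eqref{SS}. I would then apply the argument of Lemma \ref{local-e} to this family for each fixed $k$, which only uses support at scale $2^j$ and the $L^q$ size. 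That lemma is stated for kernels supported in $2^{j-2}<|x-y|<2^j$, so I would either note that its proof (Hölder plus the geometry of the stopping collection) is insensitive to a fixed enlargement of the annulus, or split each $\mathscr{K}^{(k)}_j$ into a bounded number of pieces at adjacent scales. Either way,
$$\Big|\sum_j\varepsilon_j\langle (T^k_j-T_j)b_{j-k},\bar h\rangle\Big|\lesssim k^{-2}|Q|\,\|b\|_{\mathcal{X}_1}\|h\|_{\mathcal{Y}_{q'}}\|\Omega\|_{L^q}.$$
Summing over $k\ge 10n$ with $\sum k^{-2}<\infty$ gives the lemma, since $\|b\|_{\mathcal{X}_1}=\|b\|_{\dot{\mathcal{X}}_1}$. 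No cancellation of $b$ is needed here.

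The main obstacle is the pointwise decay estimate, in particular tracking that the regularity hypothesis is applicable and keeping the support bookkeeping compatible with Lemma \ref{local-e}. The rough factor $\Omega(x-y)$ is harmless because it is not smoothed and is simply carried along as a multiplier in the $L^q$ norm.
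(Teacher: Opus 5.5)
Your proposal is correct and follows essentially the paper's route. You split $K_j(z,y)-K_j(x,y)$ into a cut-off difference, controlled by \eqref{Ksize}, and a kernel difference, controlled by \eqref{Ksmooth}; this gives the pointwise bound $k^{-2}2^{-jn}\chi_{\{2^{j-3}\le|x-y|\le 2^{j+1}\}}$, and summing $k^{-2}$ over $k\ge 10n$ finishes, exactly as in the paper. The only difference is packaging: you apply Lemma \ref{local-e} to the difference kernels with \eqref{SS}-constant $k^{-2}\|\Omega\|_{L^q}$, correctly handling the enlarged annulus, whereas the paper rederives that single-scale H\"older/$M_{q'}$ estimate by hand in \eqref{O1}.
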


\begin{proof}
We first recall the definition of $b_j$.
It follows from the triangle inequality that
\begin{equation}\label{regular}
\begin{aligned}
  |\tilde{\mathcal{T}}(b, h)-\mathcal{T}(b, h)| = & \Big|\sum_{k \geq 10 n} \sum_j \varepsilon_j \langle (T^k_j-T_j) (b_{j-k}), \bar{h}\rangle \Big|\\
  \leq  & \sum_{k \geq 10 n} \sum_j \sum_{\substack{L \in \mathscr{Q} \\ j_L=j-k}}  \int_{\mathbb{R}^n} |(T^k_j-T_j) (b_{L})(x)| |h(x)|dx.
\end{aligned}
\end{equation}

By the definitions of $T^k_j$ and $T_j$, and the fact that $\int_{\mathbb{R}^n}\phi =1$, we obtain
\begin{align*}
  |(T^k_j-T_j)b_{L}(x)|=&\Big|\int_{\mathbb{R}^n} \Omega(x-y) \big(K^k_j(x, y)-K_j(x, y)\big) b_{L}(y)\,dy\Big|\\
  \leq &\int_{\mathbb{R}^n} |\Omega(x-y)| \int_{\mathbb{R}^n} \phi_{j-\kappa_{\delta}(k)}(z)| K_j(x-z,y)-K_j(x, y)|\,dz  |b_{L}(y)|\,dy.
\end{align*}

From the expression of $K_j(x,y)$, it follows that
\begin{align*}
  | K_j(x-z,y)-K_j(x, y)|&\leq  2^{-jn} |\varphi(2^{-j}(x-z-y))|x-z-y|^n-\varphi(2^{-j}(x-y))|x-y|^n| |K(x-z, y)|\\
  & \quad +  2^{-jn} |\varphi(2^{-j}(x-y))| |x-y|^n | K(x-z, y)-K(x,y)|.
\end{align*}

For the first term, note that $|z|\leq 2^{j-\kappa_{\delta}(k)}$ and, by the support condition of $\varphi_j$, we have $|x-y|\approx|x-z-y|$ and $2^{j-3}\leq|x-y|\leq 2^{j+1}$.
Then using \eqref{Ksize}, the first term is bounded by
\begin{align*}
  \frac{2^{-j}|z|}{|x-z-y|^n}\chi_{\{2^{j-3}\leq|x-y|\leq 2^{j+1}\}} \lesssim k^{-2}2^{-j n}\chi_{\{2^{j-3}\leq|x-y|\leq 2^{j+1}\}}.
\end{align*}

For the second term, since $|z|\leq 2^{j-\kappa_{\delta}(k)}$ and $2^{j-2}\leq|x-y|\leq 2^{j}$, we have
$2|z|<|x-y|$.
Then applying the regularity condition \eqref{Ksmooth}, the second term is bounded by
\begin{align*}
  \frac{|z|^{\delta}}{|x-y|^{n+\delta}}\chi_{\{2^{j-2}\leq|x-y|\leq 2^{j}\}} \lesssim k^{-2} 2^{-j n}\chi_{\{2^{j-3}\leq|x-y|\leq 2^{j+1}\}}.
\end{align*}

Combining the two estimates, it follows that
\begin{equation*}
  | K_j(x-z,y)-K_j(x, y)|\lesssim k^{-2} 2^{-j n}\chi_{\{2^{j-3}\leq|x-y|\leq 2^{j+1}\}}.
\end{equation*}
Therefore,
\begin{equation}\label{Tb}
\begin{aligned}
  \int_{\mathbb{R}^n} |(T^k_j-T_j) (b_{L})(x)| |h(x)|dx
  \lesssim  k^{-2} 2^{-j n}   \int_{\mathbb{R}^n}  \int_{2^{j-3}\leq|x-y|\leq 2^{j+1}} |\Omega(x-y)|   |b_{L}(y)|\,dy |h(x)|dx.
\end{aligned}
\end{equation}

We claim that for each $L\in \mathscr{Q}$ with $j_L=j-k$, one has
\begin{equation}\label{O1}
   \int_{\mathbb{R}^n}  \int_{2^{j-3}\leq|x-y|\leq 2^{j+1}} |\Omega(x-y)|   |b_{L}(y)|\,dy |h(x)|dx
  \lesssim  2^{jn} \|b_{L}\|_{1}\|h\|_{\mathcal{Y}_{q'}}\|\Omega\|_{L^q}.
\end{equation}

To verify this, it suffices to show that for all $j\geq j_L$
$$
\sup_{y\in L} \int_{2^{j-3}\leq|x-y|\leq 2^{j+1}} |\Omega(x-y)| |h(x)|dx
  \lesssim  2^{jn} \|h\|_{\mathcal{Y}_{q'}}\|\Omega\|_{L^q}.
$$
Indeed, by H\"{o}lder's inequality and the support condition, we have
\begin{equation}\label{qq'}
\begin{aligned}
  \int_{2^{j-3}\leq|x-y|\leq 2^{j+1}} |\Omega(x-y)| |h(x)|dx
  \leq & \Big(\int_{2^{j-3}\leq|x|\leq 2^{j+1}} |\Omega(x)|^q dx \Big)^{\frac{1}{q}} \Big(\int_{B(y,2^{j+10})} |h(x)|dx \Big)^{\frac{1}{q'}}\\
  \lesssim & 2^{jn} \, (\inf_{\hat{L}} M_{q'}h)\,\|\Omega\|_{L^q}\leq 2^{jn} \|h\|_{\mathcal{Y}_{q'}}\|\Omega\|_{L^q}.
\end{aligned}
\end{equation}
Here we used that $B(y,2^{j+10})$ contains the dilated cube $\hat{L}$.
Hence \eqref{O1} follows.

Substituting \eqref{O1} into \eqref{Tb}, and then into \eqref{regular}, and using the fact that $\|b_{L}\|_1\lesssim |L|\|b\|_{\dot{\mathcal{X}}_1}$ for $L\in \mathscr{Q}$, and that the cubes \(L\) are disjoint, we obtain
\begin{align*}
  |\tilde{\mathcal{T}}(b, h)-\mathcal{T}(b, h)|
  \lesssim & \sum_{k \geq 10 n} \sum_j \sum_{L\in \mathscr{Q}, j_L=j-k} k^{-2}  \|b_{L}\|_{1} \|h\|_{\mathcal{Y}_{q'}}\|\Omega\|_{L^q} \\
  \lesssim & \sum_{k \geq 10 n} \sum_j \sum_{L\in \mathscr{Q}, j_L=j-k} k^{-2}   |L| \|b\|_{\dot{\mathcal{X}}_1}  \|h\|_{\mathcal{Y}_{q'}}\|\Omega\|_{L^q} \\
  \lesssim &  |Q|\|b\|_{\dot{\mathcal{X}}_{1}} \|h\|_{\mathcal{Y}_{q'}}\|\Omega\|_{L^q}.
\end{align*}
This completes the proof of Lemma \ref{lemTerr}.
\end{proof}

By Lemma \ref{lemTerr} and the fact that $p\geq 1$, to verify \eqref{forL1}, it suffices to show
\begin{equation}\label{forL2}
|\tilde{\mathcal{T}} (b,h)| \lesssim  p' |Q|\|b\|_{\dot{\mathcal{X}}_1}\|h\|_{\mathcal{Y}_p}
\left\{\begin{array}{lr}
  \|\Omega\|_{L^{q,1}\log L}, &  1<q<\infty, p\geq q',\\
  \|\Omega\|_{L^\infty}, &  q=\infty, p>1.
\end{array}\right.
\end{equation}

To this end, we decompose the operators $T^k_j$ into a regularized part and a remainder.
Let $P_m$ denote the convolution operator with kernel $\phi_m$.
Then we write
$$T^k_j=P_{j-k+\kappa_{\delta}(k)}T^k_j+(I-P_{j-k+\kappa_{\delta}(k)})T^k_j.$$

The following lemma controls the contribution of the regularized part.

\begin{lemma}\label{Pbh}
Let $1<q\leq \infty$ and $\Omega\in L^q(\mathbb{S}^{n-1})$. With the above notations, we have
$$
\tilde{\mathcal{P}}(b, h):=\Big|\sum_{k \geq 10 n} \sum_j \varepsilon_j \langle P_{j-k+\kappa_{\delta}(k)}T^k_j (b_{j-k}), \bar{h}\rangle \Big|\lesssim |Q|  \|b\|_{\mathcal{\dot{X}}_{1}}\|h\|_{\mathcal{Y}_{q'}}\|\Omega\|_{L^q}.
$$
\end{lemma}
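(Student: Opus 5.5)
The plan is to move $P_{j-k+\kappa_{\delta}(k)}$ onto $h$ by duality, and then to exploit the mean-zero property $\int_L b_L=0$ through the $y$-smoothness of the mollified kernel $K^k_j$ in \eqref{Kkj0}. The roughness of $\Omega$ blocks any direct use of smoothness in $y$. I will therefore avoid differentiating $\Omega$. Instead, I use the fact that $P_{j-k+\kappa_\delta(k)}$ averages the output variable $x$ at scale $2^{j-k+\kappa_\delta(k)}$. This scale is much larger than $\ell(L)=2^{j-k}$ but much smaller than $2^j$.

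Concretely, write
$$\langle P_m T^k_j b_L,\bar h\rangle=\int b_L(y)\int \Big(\int \phi_m(x-u)\Omega(u-y)K^k_j(u,y)\,du\Big)h(x)\,dx\,dy,$$
with $m=j-k+\kappa_\delta(k)$. Set $G_{j,k}(x,y):=\int\phi_m(x-u)\Omega(u-y)K^k_j(u,y)\,du$. Using $\int_L b_L=0$ and denoting by $c_L$ the center of $L$, the term becomes
$$\int b_L(y)\int \big(G_{j,k}(x,y)-G_{j,k}(x,c_L)\big)h(x)\,dx\,dy.$$

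The key step is to bound $|G_{j,k}(x,y)-G_{j,k}(x,c_L)|$ for $|y-c_L|\lesssim 2^{j-k}$. Change variables $u=v+y$, so that
$$G_{j,k}(x,y)=\int \phi_m(x-y-v)\Omega(v)K^k_j(v+y,y)\,dv.$$
Now the dependence on $y$ sits only in the smooth functions $\phi_m(x-y-v)$ and $K^k_j(v+y,y)$, while $\Omega(v)$ is untouched.
\begin{itemize}
\item The difference of the $\phi_m$ factors is bounded by $2^{j-k}\,2^{-m}\,2^{-mn}$ on a $2^m$-neighborhood. This gives a factor $2^{-\kappa_\delta(k)}$, which is too weak alone but still useful.
\item The difference of the $K^k_j$ factors is controlled using the mollification in the first variable together with \eqref{Ksmooth} in the second variable. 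This gives a factor $(2^{j-k}/2^{j-\kappa_\delta(k)})^{\delta}+(2^{-k}\cdot 2^{\kappa_\delta(k)})^{\delta}\lesssim 2^{-(k-\kappa_\delta(k))\delta}$ times $2^{-jn}$.
\end{itemize}
Since $\kappa_\delta(k)\approx 2\delta^{-1}\log_2 k$, both factors are summable in $k$: the first is $\lesssim k^{-2}$ and the second is $\lesssim 2^{-k\delta/2}$.

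It then remains to integrate $|\Omega(v)|$ against $|h(x)|$ over the relevant region. Here $v$ ranges over the annulus $|v|\sim 2^j$ and $x$ over a $2^m$-neighborhood of $y+v$. I apply H\"older exactly as in \eqref{qq'}: $\Omega$ goes into $L^q$ on the annulus, and the $h$-average goes into $\inf_{\hat L}M_{q'}h\le\|h\|_{\mathcal{Y}_{q'}}$ over a ball of radius $\sim 2^{j+10}$ containing $\hat L$. This yields, for each $L$ with $j_L=j-k$,
$$|\langle P_mT^k_jb_L,\bar h\rangle|\lesssim k^{-2}\|b_L\|_1\|h\|_{\mathcal{Y}_{q'}}\|\Omega\|_{L^q}.$$
Summing over $L$ with $\|b_L\|_1\lesssim|L|\|b\|_{\dot{\mathcal X}_1}$, using that the cubes are disjoint and contained in $3Q$, and then summing $k^{-2}$ over $k\ge 10n$, gives the claimed bound, in the same way as the end of the proof of Lemma \ref{lemTerr}.

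The main obstacle is the first bullet. The smoothness of $\phi_m$ only gains $2^{-\kappa_\delta(k)}\approx k^{-2/\delta}$, and one must make sure no loss of $2^{k}$-type arises when the $L^1$ norm in $x$ is spread over the $2^m$-ball. To handle this, I will keep the $x$-integration inside the maximal function average rather than estimating pointwise, so that only $\|\nabla\phi_m\|_{L^1}\sim 2^{-m}$ enters. The choice of $\kappa_\delta(k)$ is exactly what makes these two gains, $k^{-2}$ and $2^{-k\delta/2}$, summable.
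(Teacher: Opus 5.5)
Your proposal is correct and is essentially the paper's proof. It uses the same kernel $A_{j,k}$ (your $G_{j,k}$), the same subtraction at the center of $L$, and the same change of variables freezing $\Omega$ (the paper's $x-z$ is your $v$). It makes the same split into the $\phi_{j-k+\kappa_\delta(k)}$-difference and the first- and second-variable differences of $K^k_j$, and applies H\"older as in \eqref{qq'}, producing the factors $2^{-\kappa_\delta(k)}$, $2^{\kappa_\delta(k)-k}$ and $2^{-k\delta}$. One small slip: the two terms in your second bullet coincide. The second-variable term should be $2^{-k\delta}$ from \eqref{Ksmooth}, plus $2^{-k}$ from the cutoff $\varphi$, while the first-variable term enjoys the full Lipschitz gain $2^{\kappa_\delta(k)-k}$ without the power $\delta$; neither point affects summability in $k$.
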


\begin{proof}
  Let $L\in \mathscr{Q}$ with $j_L=j-k$. Applying Fubini's theorem, we can write
\begin{align*}
  &P_{j-k+\kappa_{\delta}(k)} T^{k}_{j}b_L(x)\\
  &=\int_{\mathbb{R}^n} \phi_{j-k+\kappa_{\delta}(k)}(x-w) T^{k}_{j}b_L(w)\,dw\\
  &=\int_{\mathbb{R}^n}  \phi_{j-k+\kappa_{\delta}(k)}(x-w) \int_{\mathbb{R}^n} \Omega(w-y) K^k_j(w,y) b_L(y)\,dy \,dw\\
  &=\int_{\mathbb{R}^n} \int_{\mathbb{R}^n}  \phi_{j-k+\kappa_{\delta}(k)}(x-w)  \Omega(w-y) K^k_j(w,y) \,dw b_L(y)\,dy\\
  &= \int_{\mathbb{R}^n} A_{j,k}(x,y)b_L(y)\,dy,
\end{align*}
where $$A_{j,k}(x,y)=\int_{\mathbb{R}^n}  \phi_{j-k+\kappa_{\delta}(k)}(x-w)  \Omega(w-y) K^k_j(w,y) \,dw.$$

In view of the cancellation of $b_L$, and applying Fubini's theorem, we can write
\begin{align*}
  \int_{\mathbb{R}^n}P_{j-k+\kappa_{\delta}(k)} T^{k}_{j} b_L(x)h(x)dx
  = \int_{\mathbb{R}^{n}} \int_{\mathbb{R}^{n}} \Big(A_{j,k}(x, y)-A_{j,k}(x, y_L)\Big)h(x)dx  b_L(y)dy,
\end{align*}
where \( y_L \) denotes the center of \( L \).

Recalling the supports of $\phi_{j-k+\kappa_{\delta}(k)}$ and $K^k_j$, we have
$$
\supp A_{j,k}(x,y) \subset \{z: 2^{j-4}\leq |z|\leq2^{j+2}\}.
$$
Since $y\in L$, we have $x\in B(y,2^{j+10})$.
Applying H\"{o}lder's inequality, we obtain
\begin{equation}\label{Aq}
\begin{aligned}
  &\Big|\int_{\mathbb{R}^{n}} \int_{\mathbb{R}^{n}} \Big(A_{j,k}(x, y)-A_{j,k}(x, y_L)\Big)h(x)dx  b_L(y)dy\Big|\\
  &\leq  \sup_{y\in L}\Big |\int_{\mathbb{R}^{n}} \Big(A_{j,k}(x, y)-A_{j,k}(x, y_L)\Big)h(x)dx  \Big| \|b_L\|_1\\
  &\leq  \sup_{y\in L}\|A_{j,k}(x, y)-A_{j,k}(x, y_L)\|_q \Big(\int_{B(y,2^{j+10})} |h(x)|dx \Big)^{\frac{1}{q'}}  \|b_L\|_1\\
  &\leq  \sup_{y\in L}\|A_{j,k}(x, y)-A_{j,k}(x, y_L)\|_q  2^{\frac{jn}{q'}} \|h\|_{\mathcal{Y}_{q'}}  \|b_L\|_1,
\end{aligned}
\end{equation}
where the last inequality uses the same argument as in \eqref{qq'}.

We now turn to estimating $\|A_{j,k}(x, y)-A_{j,k}(x, y_L)\|_q$.
By the change of variables $w=x+y-z$, we can rewrite $A_{j,k}(x, y)$ as
$$A_{j,k}(x,y)=\int_{\mathbb{R}^n}  \phi_{j-k+\kappa_{\delta}(k)}(z-y)  \Omega(x-z) K^k_j(x+y-z,y) \,dz.$$
Therefore, we have
\begin{align*}
&A_{j,k}(x,y)-A_{j,k}(x,y_L)\\
&=\int_{\mathbb{R}^n} \Omega(x-z) \Big(\phi_{j-k+\kappa_{\delta}(k)}(z-y)   K^k_j(x+y-z,y)-\phi_{j-k+\kappa_{\delta}(k)}(z-y_L)   K^k_j(x+y_L-z,y_L)\Big) \,dz\\
&=\int_{\mathbb{R}^n} \Omega(x-z) \Big(\phi_{j-k+\kappa_{\delta}(k)}(z-y)   -\phi_{j-k+\kappa_{\delta}(k)}(z-y_L)\Big)K^k_j(x+y-z,y) \,dz\\
&\quad +\int_{\mathbb{R}^n} \Omega(x-z)  \phi_{j-k+\kappa_{\delta}(k)}(z-y_L) \Big(K^k_j(x+y-z,y)-  K^k_j(x+y_L-z,y_L)\Big) \,dz\\
&=:I_1+I_2.
\end{align*}

For the term $I_1$, by \eqref{Kkj}, we have
\begin{align*}
|I_1|\lesssim & 2^{-j n} \int_{2^{j-3}\leq |x-z|\leq 2^{j+1}} |\Omega(x-z)| |\phi_{j-k+\kappa_{\delta}(k)}(z-y)   -\phi_{j-k+\kappa_{\delta}(k)}(z-y_L)| \,dz\\
\lesssim & 2^{-j n} \int_{2^{j-3}\leq |x-z|\leq 2^{j+1}} |\Omega(x-z)| \big|\int_0^1 \langle y-y_L, \nabla (\phi_{j-k+\kappa_{\delta}(k)}) (z-ty-(1-t)y_L)dt\big| \,dz.
\end{align*}
Thus by Minkowski's inequality, we have
\begin{align*}
  \|I_1\|_q \lesssim & 2^{-j n} \int_{\mathbb{R}^n} (\int_{2^{j-3}\leq |x-z|\leq 2^{j+1}} |\Omega(x-z)|^q dx)^{\frac{1}{q}} \Big|\int_0^1 \langle y-y_L, \nabla (\phi_{j-k+\kappa_{\delta}(k)}) (z-ty-(1-t)y_L)dt\Big| \,dz\\
  \lesssim & 2^{-\frac{j n}{q'}} \|\Omega\|_{L^q} \int_{\mathbb{R}^n} \Big|\int_0^1 \langle y-y_L, \nabla (\phi_{j-k+\kappa_{\delta}(k)}) (z-ty-(1-t)y_L)dt\Big| \,dz\\
    \lesssim & 2^{j-k} 2^{-\frac{j n}{q'}} \|\Omega\|_{L^q}  \| \nabla (\phi_{j-k+\kappa_{\delta}(k)})\|_1\\
        \lesssim & 2^{-\kappa_{\delta}(k)} 2^{-\frac{j n}{q'}} \|\Omega\|_{L^q}.
\end{align*}

For the term $I_2$, write $K^k_j(x+y-z,y)-  K^k_j(x+y_L-z,y_L)$ as
$$K^k_j(x+y-z,y)- K^k_j(x+y_L-z,y)+  K^k_j(x+y_L-z,y) - K^k_j(x+y_L-z,y_L).$$
Notice that $|K_j(x,y)|\lesssim 2^{-j n}\chi_{\{2^{j-2}\leq |x-y|\leq 2^j\}}(x-y)$.
Therefore, by the definition of $K^k_j$ in \eqref{Kkj0}, we have
\begin{align*}
  &|K^k_j(x+y-z,y)- K^k_j(x+y_L-z,y)|\\
  & =\Big|    \int_{\mathbb{R}^n} \Big(\phi_{j-\kappa_{\delta}(k)}(x+y-z-w)-\phi_{j-\kappa_{\delta}(k)}(x+y_L-z-w)\Big) K_j(w,y) dw     \Big|\\
  &\lesssim 2^{-j n} \int_{\mathbb{R}^n} \int_0^1 |\langle y-y_L, \nabla(\phi_{j-\kappa_{\delta}(k)})(x+ty+(1-t)y_L-z-w) \rangle| dtdw\\
  &\lesssim  2^{j-k} 2^{-j n}   \|\nabla(\phi_{j-\kappa_{\delta}(k)})\|_1 \\
  &\lesssim  2^{\kappa_{\delta}(k)-k} 2^{-j n}.
\end{align*}

For $K^k_j(x+y_L-z,y) - K^k_j(x+y_L-z,y_L)$, we have
\begin{align*}
  &|K^k_j(x+y_L-z,y) - K^k_j(x+y_L-z,y_L)|\\
  &=\Big|    \int_{\mathbb{R}^n} \phi_{j-\kappa_{\delta}(k)}(x+y_L-z-w)\Big(K_j(w,y)-K_j(w,y_L)\Big)  dw     \Big|.
\end{align*}

By the definition of $K_j$ in \eqref{Kj}, we have
\begin{align*}
  |K_j(w,y)-K_j(w,y_L)|&= |K(w, y)2^{-jn}\varphi(2^{-j}(w-y))|w-y|^n-K(w, y_L)2^{-jn}\varphi(2^{-j}(w-y_L))|w-y_L|^n|\\
  &\leq 2^{-jn}\varphi(2^{-j}(w-y))|w-y|^n|K(w, y)-K(w, y_L)|\\
  &\quad +|2^{-jn}\varphi(2^{-j}(w-y))|w-y|^n-2^{-jn}\varphi(2^{-j}(w-y_L))|w-y_L|^n| |K(w, y_L)|.
\end{align*}

For the first term, since $|y-y_L|\leq 2^{j-k}$ and $k\geq 10 n$, by the support of $\varphi$, we have $2|y-y_L|<|w-y|$.
Applying the regularity condition \eqref{Ksmooth}, the first term is bounded by
\begin{equation}\label{mudi}
  2^{-jn}\varphi(2^{-j}(w-y))|w-y|^n|K(w, y)-K(w, y_L)|
  \lesssim  \frac{|y-y_L|^\delta}{|w-y|^{n+\delta}}
  \lesssim  2^{-jn-k\delta}.
\end{equation}

For the second term, since $|y-y_L|\leq 2^{j-k}$ and $k\geq 10 n$, by the support of $\varphi$, we have $|w-y|\approx|w-y_L|$.
Applying the size condition \eqref{Ksize}, the second term is bounded by
\begin{align*}
 \frac{1}{|w-y_L|^{n}} \int_0^1|\langle y-y_L, \nabla (2^{-jn}\varphi(2^{-j}(\cdot))|\cdot|^n)(w-ty-(1-t)y_L) | dt
  \lesssim 2^{-jn-k}.
\end{align*}

Combining the two bounds, we obtain
\begin{align*}
  |K^k_j(x+y_L-z,y) - K^k_j(x+y_L-z,y_L)|\lesssim 2^{-jn -k\delta}.
\end{align*}

Observe that the integrand in $I_2$ is supported in $2^{j-3}\leq |x-z|\leq 2^{j+1}$. Applying Minkowski's inequality, we get
\begin{align*}
  \|I_2\|_q\lesssim & (2^{ -k\delta} +2^{\kappa_{\delta}(k)-k} )2^{-j n}\int_{\mathbb{R}^n}(\int_{2^{j-3}\leq |x-z|\leq 2^{j+1}} |\Omega(x-z)|^q)^{\frac{1}{q}}  \phi_{j-k+\kappa_{\delta}(k)}(z-y_L) \,dz\\
  \lesssim & (2^{-k\delta} +2^{\kappa_{\delta}(k)-k}) 2^{-\frac{jn}{q'}} \|\Omega\|_{L^q}.
\end{align*}

Consequently, by the estimates of $I_1$ and $I_2$, we have
\begin{align*}
\|A_{j,k}(x, y)-A_{j,k}(x, y_L)\|_q
\lesssim  (2^{-\kappa_{\delta}(k)}+2^{-k\delta} +2^{\kappa_{\delta}(k)-k})2^{-\frac{jn}{q'}}\|\Omega\|_{L^q}.
\end{align*}

By \eqref{Aq}, we have
$$
\Big| \int_{\mathbb{R}^n}P_{j-k+\kappa_{\delta}(k)} T^{k}_{j} b_L(x)h(x)dx \Big|\lesssim  (2^{-\kappa_{\delta}(k)}+2^{-k\delta} +2^{\kappa_{\delta}(k)-k})\|b_L\|_1\|h\|_{\mathcal{Y}_{q'}}  \|\Omega\|_{L^q}.
$$

Applying the triangle inequality and using that $\|b_{L}\|_1\lesssim |L|\|b\|_{\dot{\mathcal{X}}_1}$ for $L\in \mathscr{Q}$, together with the disjointness of the cubes $L$, we obtain
\begin{align*}
  |\tilde{\mathcal{P}}(b, h)|\leq & \sum_{k \geq 10 n} \sum_j \sum_{L\in \mathscr{Q}, j_L=j-k} | \langle P_{j-k+\kappa_{\delta}(k)}T^k_j (b_{L}), \bar{h}\rangle|\\
  \lesssim & \sum_{k \geq 10 n} \sum_j \sum_{L\in \mathscr{Q}, j_L=j-k}(2^{-\kappa_{\delta}(k)}+2^{-k\delta} +2^{\kappa_{\delta}(k)-k})\|b_L\|_1\|h\|_{\mathcal{Y}_{q'}}  \|\Omega\|_{L^q}\\
    \lesssim & \sum_{k \geq 10 n} \sum_j \sum_{L\in \mathscr{Q}, j_L=j-k}(2^{-\kappa_{\delta}(k)}+2^{-k\delta} +2^{\kappa_{\delta}(k)-k})|L|\|b\|_{\dot{\mathcal{X}}_1}\|h\|_{\mathcal{Y}_{q'}}  \|\Omega\|_{L^q}\\
    \lesssim   &  |Q|\|b\|_{\dot{\mathcal{X}}_1}\|h\|_{\mathcal{Y}_{q'}}  \|\Omega\|_{L^q}.
\end{align*}
This completes the proof of Lemma \ref{Pbh}.
\end{proof}

Denote $$\tilde{\mathcal{B}} (b,h):=  \Big|\sum_{k \geq 10 n} \sum_j \varepsilon_j \langle (I-P_{j-k+\kappa_{\delta}(k)})T^k_j (b_{j-k}), \bar{h}\rangle \Big|.$$
By Lemma \ref{Pbh}, to show \eqref{forL2}, it suffices to prove
\begin{equation}\label{forL3}
\begin{aligned}
\tilde{\mathcal{B}} (b,h)\lesssim      p' |Q|\|b\|_{\dot{\mathcal{X}}_1}\|h\|_{\mathcal{Y}_p}
\left\{\begin{array}{lr}
  \|\Omega\|_{L^{q,1}\log L}, &  1<q<\infty, p\geq q',\\
  \|\Omega\|_{L^\infty}, &  q=\infty, p>1.
\end{array}\right.
\end{aligned}
\end{equation}

We next decompose the kernel $\Omega$ into bounded and unbounded parts.
For each fixed integer $k \geq 10 n$, define
$$
D_{k}=\{\theta\in \mathbb{S}^{n-1}: |\Omega(\theta)|>2^{\gamma k}\|\Omega\|_{L^q}\}, \quad \Omega_k=\Omega \chi_{\mathbb{S}^{n-1}\backslash D_{k}},\quad \mho_k=\Omega \chi_{ D_{k}},
$$
where $\gamma>0$ will be fixed later.
Correspondingly, we write
$$
T^k_j (f)(x)= U^{k}_j (f)(x) +V^{k}_j (f)(x)
$$
with
$$
U^{k}_j (f)(x)=\int_{\mathbb{R}^n} \Omega_k(x-y) K^k_j(x, y) f(y)\,dy;\,
V^{k}_j (f)(x)=\int_{\mathbb{R}^n} \mho_k(x-y) K^k_j(x, y) f(y)\,dy.
$$

The following lemma gives basic estimates involving $U^{k}_j$ and $V^{k}_j$.

\begin{lemma}\label{UVbh}
Let $1<q\leq \infty$ and $\Omega\in L^q(\mathbb{S}^{n-1})$. With the above notations, we have
\begin{equation}\label{Ukbh}
\mathcal{U}^{k}(b,h):=  |\sum_j \varepsilon_j \langle (I-P_{j-k+\kappa_{\delta}(k)})U^{k}_j (b_{j-k}), \bar{h}\rangle |\lesssim  |Q| \|b\|_{\dot{\mathcal{X}}_{1}} \|h\|_{\mathcal{Y}_{1}}\|\Omega_k\|_{L^{\infty}},
\end{equation}
and
\begin{equation}\label{Vkbh}
\mathcal{V}^{k}(b,h):=  \big|\sum_j \varepsilon_j \langle (I-P_{j-k+\kappa_{\delta}(k)})V^{k}_j (b_{j-k}), \bar{h}\rangle \big|\lesssim  |Q| \|b\|_{\dot{\mathcal{X}}_{1}} \|h\|_{\mathcal{Y}_{q'}}\|\mho_k\|_{L^{q}}.
\end{equation}
\end{lemma}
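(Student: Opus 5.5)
Both estimates are single-scale bounds for a fixed $k$, with no decay in $k$ claimed. So I plan to prove them by the same absolute-value argument used for \eqref{allk} and in the proof of Lemma \ref{lemTerr}, with cancellation playing no role. The key observation is that $I-P_m$ is harmless on kernels. If $S$ has kernel $S(x,y)$ supported in $\{2^{j-3}\le|x-y|\le 2^{j+1}\}$, then $(I-P_{j-k+\kappa_\delta(k)})S$ has kernel
$$S(x,y)-\int\phi_{j-k+\kappa_\delta(k)}(x-w)S(w,y)\,dw .$$
Here $j-k+\kappa_\delta(k)\le j-1$ once $k\ge 10n$, since $\kappa_\delta(k)\lesssim \log k\ll k$. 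Hence the averaged part has support in $\{2^{j-4}\le |x-y|\le 2^{j+2}\}$. Its size is controlled by averaging $|S(\cdot,y)|$ over balls of radius $2^{j-k+\kappa_\delta(k)}\le 2^{j-1}$.

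For \eqref{Ukbh}, the kernel of $U^k_j$ is $\Omega_k(x-y)K^k_j(x,y)$, and by \eqref{Kkj} it is bounded by $\|\Omega_k\|_{L^\infty}2^{-jn}\chi_{\{2^{j-3}\le|x-y|\le 2^{j+1}\}}$. The averaged version obeys the same bound with the enlarged annulus, because $\phi_m\ge0$ has integral one. So the kernel of $(I-P_{j-k+\kappa_\delta(k)})U^k_j$ is at most $\|\Omega_k\|_\infty 2^{-jn}\chi_{\{|x-y|\le 2^{j+2}\}}$. For $L\in\mathscr{Q}$ with $j_L=j-k$ and $y\in L$, I then use
$$2^{-jn}\int_{B(y,2^{j+2})}|h|\lesssim \inf_{\hat L}Mh\le\|h\|_{\mathcal{Y}_1},$$
since $B(y,2^{j+10})\supset\hat L$ as in \eqref{qq'}. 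This gives
$$|\langle (I-P)U^k_j b_L,\bar h\rangle|\lesssim \|\Omega_k\|_\infty\|b_L\|_1\|h\|_{\mathcal{Y}_1}.$$
Summing over $L$ with $j_L=j-k$ and then over $j$, each $L$ is counted once for fixed $k$. Using $\|b_L\|_1\lesssim |L|\|b\|_{\dot{\mathcal X}_1}$ and disjointness of $L\subset 3Q$, the total is $\lesssim|Q|\|b\|_{\dot{\mathcal X}_1}\|h\|_{\mathcal{Y}_1}\|\Omega_k\|_{L^\infty}$. The bound is uniform in the signs $\varepsilon_j$ because we have taken absolute values.

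For \eqref{Vkbh} I treat the identity term and the $P$ term separately. The identity term is bounded by
$$2^{-jn}\int\!\!\int_{2^{j-3}\le|x-y|\le2^{j+1}}|\mho_k(x-y)||b_L(y)||h(x)|\,dy\,dx,$$
and this is exactly the quantity handled in \eqref{O1}, with $\Omega$ replaced by $\mho_k$. Thus it is $\lesssim\|b_L\|_1\|h\|_{\mathcal{Y}_{q'}}\|\mho_k\|_{L^q}$. For the $P$ term, I move $P$ onto $h$ by duality: since $\phi$ is radial, $\langle P_mV^k_jb_L,\bar h\rangle=\langle V^k_jb_L,\overline{P_m h}\rangle$. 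Pointwise $|P_mh|\le Mh$, but I need control in $\mathcal{Y}_{q'}$ near $L$. For $x$ in the annulus around $y\in L$, I would instead use Minkowski's inequality: write the $x$-integral against $|P_m h|$ as an average over translates $u$ with $|u|\le 2^{j-1}$ of $\int|\mho_k(x-y)||K^k_j||h(x-u)|\,dx$. Each translate is estimated by Hölder as in \eqref{qq'}, with $B(y,2^{j+10})$ still containing the shifted region and $\hat L$. This yields the same bound, and summation over $L$ and $j$ proceeds as before.

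The only point needing care is this last step, namely checking that after applying $P_{j-k+\kappa_\delta(k)}$ the support and Hölder argument still only sees $h$ on a ball $B(y,2^{j+10})$, so that $\inf_{\hat L}M_{q'}h$ controls it. This follows because the smoothing scale is at most $2^{j-1}$.
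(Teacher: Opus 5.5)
Your proposal is correct and is essentially the paper's argument. The paper computes the kernel $\tilde A_{j,k}$ of $(I-P_{j-k+\kappa_{\delta}(k)})V^k_j$, bounds $[\tilde A]_{0,q}\lesssim\|\mho_k\|_{L^q}$ by Minkowski's inequality, and cites Lemma~\ref{local-e}; you instead inline the H\"older estimate \eqref{qq'} behind that lemma and move $P_{j-k+\kappa_{\delta}(k)}$ onto $h$ by duality, which is the same computation.

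One small correction: $j-k+\kappa_{\delta}(k)\le j-1$ does not hold for every $k\ge 10n$ (e.g.\ $n=2$, $\delta=1/2$, $k=20$ gives $\kappa_{\delta}(k)=27$). However, $\kappa_{\delta}(k)-k\le C_\delta$ for all $k$, so the smoothing scale is always at most $2^{j+C_\delta}$. Your argument then goes through with $B(y,2^{j+C_\delta+10})$ in place of $B(y,2^{j+10})$, at the cost of a $\delta$-dependent constant; the paper tacitly relies on the same support bound.
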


We next provide a decay estimate for the bounded part.

\begin{lemma}\label{Ubh}
  Let $1<q\leq \infty$ and $\Omega\in L^q(\mathbb{S}^{n-1})$. With the above notations, we have
  \begin{equation}\label{Ukbh2}
  \mathcal{U}^{k}(b,h)\lesssim k^{200n\delta^{-1}}  2^{-k/8}   |Q| \|b\|_{\dot{\mathcal{X}}_{1}} \|h\|_{\mathcal{Y}_{\infty}}\|\Omega_k\|_{L^{\infty}}.
  \end{equation}
\end{lemma}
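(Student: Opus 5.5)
We prove \eqref{Ukbh2} through an $L^2$ estimate. Because $\|h\|_{\mathcal{Y}_\infty}=\|h\|_\infty$ and $h$ is supported in $3Q$, Cauchy--Schwarz gives
\begin{equation*}
\mathcal{U}^k(b,h)\le \Big\|\sum_j \varepsilon_j (I-P_{j-k+\kappa_\delta(k)})U^k_j b_{j-k}\Big\|_2\, |3Q|^{1/2}\|h\|_\infty .
\end{equation*}
The goal is therefore
\begin{equation*}
\Big\|\sum_j \varepsilon_j (I-P_{j-k+\kappa_\delta(k)})U^k_j b_{j-k}\Big\|_2^2\lesssim k^{400n\delta^{-1}}2^{-k/4}\|\Omega_k\|_\infty^2\,|Q|\,\|b\|_{\dot{\mathcal X}_1}^2 .
\end{equation*}
We use the standard fact that $\sum_j\|b_{j-k}\|_1\lesssim |Q|\|b\|_{\dot{\mathcal X}_1}$ and $\|b_{j-k}\|_\infty$-type control is not available; so, as in Seeger's argument, we first replace $b$ by a function with an $L^2$ bound. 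Each $b_L$ is replaced by $b_L*\phi_{j_L}$-type averages, or we simply use $\|\sum_j |b_{j-k}|\|$ against the $\mathcal{Y}$ structure. The cleaner route is to keep $b$ in $L^1$ and estimate each piece pointwise after the microlocal decomposition.

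\textbf{Microlocal decomposition.} Cover $\mathbb S^{n-1}$ by $N_k\approx 2^{(k-\kappa_\delta(k))(n-1)/2}$, or more precisely $\approx 2^{\beta k(n-1)}$ with $\beta<1$ to be fixed, caps $\{\theta_\nu\}$ of diameter $2^{-\beta k}$. Take an associated smooth partition of unity $\chi_\nu$, homogeneous of degree $0$. Split $U^k_j=\sum_\nu U^{k}_{j,\nu}$ with kernels $\Omega_k(x-y)\chi_\nu(x-y)K^k_j(x,y)$. For each $\nu$ we insert a frequency cutoff $\Gamma_\nu$ to the cone $\{|\langle\xi/|\xi|,\theta_\nu\rangle|\lesssim 2^{-\beta k}\}$, acting on $b$ as a Fourier multiplier. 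This gives $U^k_{j,\nu}=U^k_{j,\nu}\Gamma_\nu+U^k_{j,\nu}(I-\Gamma_\nu)$.

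\textbf{Good part.} For $\sum_\nu U^k_{j,\nu}\Gamma_\nu b_{j-k}$, the kernel is nonconvolution. Nevertheless, after the smoothing defining $K^k_j$ it is smooth in $x$ at scale $2^{j-\kappa_\delta(k)}$, so $L^2$ orthogonality can be used. Following Seeger, this term is bounded via the geometric estimate that each $L$ meets $\lesssim 2^{\beta k(n-1)}$-thin tubes with controlled overlap. The bound is $\|\sum_j\sum_\nu\cdots\|_2^2\lesssim 2^{k(n-1)\beta}2^{-k(n-1)}\cdot$ (losses $2^{n\kappa_\delta(k)}\approx k^{2n/\delta}$) $\times\|\Omega_k\|_\infty^2|Q|\|b\|^2_{\dot{\mathcal X}_1}$, using $\|b_L\|_\infty$ replaced by $\|b\|_{\dot{\mathcal X}_1}$ through an $L^1\to L^2$ gain at the scale of $L$ supplied by the Littlewood--Paley projection $(I-P_{j-k+\kappa_\delta(k)})$.

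\textbf{Bad part.} For $U^k_{j,\nu}(I-\Gamma_\nu)b_{j-k}$ we integrate by parts in the radial direction $\theta_\nu$. Here the frequencies are nondegenerate in the $\theta_\nu$-direction, $|\langle\xi,\theta_\nu\rangle|\gtrsim 2^{-\beta k}|\xi|$. The factor $I-P_{j-k+\kappa_\delta(k)}$ localizes to $|\xi|\gtrsim 2^{-(j-k+\kappa_\delta(k))}$, and the kernel is smooth along rays at scale $2^{j-\kappa_\delta(k)}$ from $K^k_j$. Each integration by parts therefore gains $2^{\beta k}2^{\kappa_\delta(k)}2^{-k}$ modulo polynomial losses in $k$. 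Rather than Mihlin multipliers, we expand the symbol using an $L^1$ Fourier multiplier estimate on the multiplier $(1-\Gamma_\nu)(1-\hat\phi(2^{j-k+\kappa_\delta(k)}\cdot))$ times the radial derivative. This gives an $L^1$-kernel bound, so the term is controlled by $\sum_L\|b_L\|_1\|h\|_\infty$ times $2^{-Mk(1-\beta)}k^{CM/\delta}N_k$.

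\textbf{Choice of $\beta$ and main obstacle.} Choosing $\beta$ around $1/2$ and $M$ large yields $2^{-k/8}$, with the powers of $k$ from $2^{\kappa_\delta(k)}\approx k^{2/\delta}$ absorbed into $k^{200n\delta^{-1}}$. The main obstacle is the good part. The $L^2$ orthogonality in Seeger's argument relies on convolution structure, which is lost here. Our plan is to exploit that $K^k_j(\cdot,y)$ is smooth at scale $2^{j-\kappa_\delta(k)}$ and freeze the $x$-variable at that scale. Concretely, we expand $K^k_j(x,y)$ in $x$ on cubes of side $2^{j-\kappa_\delta(k)}$, which reduces the operator locally to convolutions with controlled error. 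We then run the $TT^*$ argument on these frozen pieces, accepting polynomial-in-$k$ losses.
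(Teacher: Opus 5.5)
There is a genuine gap, and it comes from where you insert the angular cutoff. You let $\Gamma_\nu$ act on $b$, i.e.\ $U^k_{j,\nu}=U^k_{j,\nu}\Gamma_\nu+U^k_{j,\nu}(I-\Gamma_\nu)$. For a non-translation-invariant kernel this breaks both halves of the argument.

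In the bad part, the Fourier phase now sits in the input variable $y$. Integrating by parts along rays would therefore need smoothness of $y\mapsto K^k_j(x,y)$ with $x$ fixed. But $K^k_j$ was mollified only in its first variable and is merely $\delta$-H\"older in $y$, so no factor $2^{-cNk}$ is available. You also combine the output-side information from $I-P_{j-k+\kappa_\delta(k)}$ (which sits to the left of $U^k_j$) with the input-side localization $I-\Gamma_\nu$. That is only legitimate if $U^k_j$ commutes with multipliers. Moreover, $I-\Gamma_\nu$ is a non-constant homogeneous multiplier of degree $0$, hence not bounded on $L^1$, so $\|(I-\Gamma_\nu)b_L\|_1$ is not controlled by $\|b_L\|_1$.

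In the good part, $\Gamma_\nu b_{j-k}$ has lost positivity and support, so the Seeger tube-counting argument cannot be run; this is the ``obstacle'' you identify. The freezing plan does not repair it. Across a cube of side $2^{j-\kappa_\delta(k)}$ the mollified kernel varies by $O(2^{-jn})$, i.e.\ by its own size. The original $\delta$-H\"older regularity gives only a relative error $2^{-\delta\kappa_\delta(k)}\approx k^{-2}$, which is polynomial, whereas the lemma needs $2^{-k/8}$. That exponential decay is what later absorbs $\|\Omega_k\|_{L^\infty}\le 2^{\gamma k}\|\Omega\|_{L^q}$, and freezing on finer cubes costs exponential overlap. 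Your opening reduction to an $L^2$ bound for the whole sum is also unnecessary: only the angularly localized part should go through $L^2$.

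The missing idea is to put the angular multiplier on the output, $U^{k,l}_j=\Psi_{k,l}U^{k,l}_j+(I-\Psi_{k,l})U^{k,l}_j$, with all Fourier multipliers to the left of $U^{k,l}_j$. Then no convolution structure is needed anywhere.

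For the good part, use Plancherel in the output variable and Cauchy--Schwarz in $l$, with $\sup_{\xi\neq0}\sum_l\Phi^2(2^{k\alpha}\langle e^k_l,\xi'\rangle)\lesssim 2^{k\alpha(n-2)}$. This reduces matters to $\sum_l\|\sum_j(I-P_{j-k+\kappa_\delta(k)})U^{k,l}_jb_{j-k}\|_2^2$. Each summand is handled by the pointwise bound $|(I-P_{j-k+\kappa_\delta(k)})U^{k,l}_jb_{j-k}|\lesssim\|\Omega_k\|_{L^\infty}H^{k,l}_j*|b_{j-k}|$, with $H^{k,l}_j$ a normalized tube indicator; this uses only the size bound on $K^k_j$. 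The sup bound on $H^{k,l}_j*H^{k,l}_i*|b_{i-k}|$ comes from the $\mathcal{X}_1$ structure (the tubes are thicker than the cubes $L$), not from an $L^1\to L^2$ gain of $I-P$. The outcome is the factor $2^{n\kappa_\delta(k)-k\alpha/2}$.

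For the bad part, work in $L^1$ against $\|h\|_\infty$ with a Littlewood--Paley decomposition $\Upsilon_m$.
For $m\le m_0=j-[k/2]$, fix $y$, write the output point as $w=y+r\theta$, and integrate by parts in $r$. This uses only the smoothness of $K^k_j$ in its first variable and gains roughly $(2^{m-j+k\alpha+\kappa_\delta(k)})^N$.
For $m>m_0$, use $\|\tilde{\Upsilon}_m(I-P_{j-k+\kappa_\delta(k)})\|_{L^1\to L^1}\lesssim\min\{1,2^{j-k+\kappa_\delta(k)-m}\}$ together with $\|(I-\Psi_{k,l})\Upsilon_m\|_{L^1\to L^1}\lesssim1$.

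This frequency threshold replaces your treatment of $I-P$ as a sharp cutoff. With $m_0=j-[k/2]$ one needs $\alpha<1/2$ (the paper takes $\alpha=1/4$), so your $\beta\approx 1/2$ would require moving the threshold.
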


The proofs of Lemmas \ref{UVbh} and \ref{Ubh} will be given in Subsection \ref{UV}.
Now, by Lemmas \ref{UVbh} and \ref{Ubh}, we complete the proof of \eqref{forL3}.

Interpolating between \eqref{Ukbh} and \eqref{Ukbh2}
with respect to $h$ via the Riesz-Thorin theorem
(see \cite[Theorem 1.3.4]{Gra}), we obtain
\begin{align*}
  \mathcal{U}^k(b,h) \lesssim (k^{200n\delta^{-1}}  2^{-k/8})^{\frac{1}{p'}}   |Q| \|b\|_{\dot{\mathcal{X}}_{1}} \|h\|_{\mathcal{Y}_{p}}\|\Omega_k\|_{L^{\infty}}.
\end{align*}
Taking $\gamma=1/(16p')$, it follows that for any $p>1$,
\begin{equation}\label{eukbh}
\begin{aligned}
  \sum_{k\geq 10n} \mathcal{U}^k(b,h)\lesssim &  \sum_{k\geq 10n}(k^{200n\delta^{-1}}  2^{-k/8})^{\frac{1}{p'}}  |Q| \|b\|_{\dot{\mathcal{X}}_{1}} \|h\|_{\mathcal{Y}_{p}}\|\Omega_k\|_{L^{\infty}}\\
  \lesssim &  \sum_{k\geq 10n} (k^{200n\delta^{-1}}  2^{-k/16})^{\frac{1}{p'}}   |Q| \|b\|_{\dot{\mathcal{X}}_{1}} \|h\|_{\mathcal{Y}_{p}}\|\Omega\|_{L^q}\\
  \lesssim &  p' |Q| \|b\|_{\dot{\mathcal{X}}_{1}} \|h\|_{\mathcal{Y}_{p}}\|\Omega\|_{L^q}.
\end{aligned}
\end{equation}

For $1<q<\infty$, as shown in \cite[page 1273]{CCDO},
$$\sum_{k\geq 10n} \|\mho_k\|_{L^{q}}\lesssim   p'\|\Omega\|_{L^{q,1}\log L}.$$
It then follows from \eqref{Vkbh} that
\begin{equation}\label{evkbh}
  \sum_{k\geq 10n} \mathcal{V}^k(b,h) \lesssim   \sum_{k\geq 10n} \|\mho_k\|_{L^{q}} |Q| \|b\|_{\dot{\mathcal{X}}_{1}} \|h\|_{\mathcal{Y}_{q'}}
  \lesssim   p' |Q| \|b\|_{\dot{\mathcal{X}}_{1}} \|h\|_{\mathcal{Y}_{q'}}\|\Omega\|_{L^{q,1}\log L}.
\end{equation}
Moreover, when $q=\infty$, since $\mho_k=0$, we have $\mathcal{V}^k(b,h)\equiv 0$.

Combining \eqref{eukbh} with \eqref{evkbh}, we obtain
\begin{align*}
  \tilde{\mathcal{B}}(b,h) &\leq \sum_{k\geq 10n} \mathcal{U}^k(b,h)+\sum_{k\geq 10n} \mathcal{V}^k(b,h)\\
  &\lesssim    p' |Q|\|b\|_{\dot{\mathcal{X}}_1}\|h\|_{\mathcal{Y}_p}
\left\{\begin{array}{lr}
  \|\Omega\|_{L^{q,1}\log L}, &  1<q<\infty, p\geq q',\\
  \|\Omega\|_{L^\infty}, &  q=\infty, p>1.
\end{array}\right.
\end{align*}
This proves \eqref{forL3}, and hence completes the proof of \eqref{forL}.

\subsection{Proofs of Lemmas \ref{UVbh} and \ref{Ubh}}\label{UV}
\begin{proof}[Proof of Lemma \ref{UVbh}]
We first prove \eqref{Vkbh}. By Fubini's theorem, the kernel of $(I-P_{j-k+\kappa_{\delta}(k)})V^{k}_j $ can be written as
$$\tilde{A}_{j,k}(x,y)=\mho_k(x-y) K^k_j(x, y)-\int_{\mathbb{R}^n}  \phi_{j-k+\kappa_{\delta}(k)}(x-w)  \mho_k(w-y) K^k_j(w,y) \,dw.$$

For each fixed $x\in \mathbb{R}^n$, applying \eqref{Kkj}, we obtain
\begin{align*}
  \|\mho_k(-\cdot) K^k_j(x, x+\cdot)\|_{q} = &\Big(\int_{\mathbb{R}^n} |\mho_k(-z) K^k_j(x, x+z)|^q\Big)^{\frac{1}{q}}\\
  \lesssim  &2^{-jn} \Big(\int_{2^{j-3}\leq |z|\leq 2^{j+1}} |\mho_k(-z) |^q\Big)^{\frac{1}{q}}\\
  \lesssim  &2^{-\frac{jn}{q'}}  \|\mho_k\|_{L^{q}}.
\end{align*}
Moreover, using \eqref{Kkj}, we have
\begin{align*}
 \int_{\mathbb{R}^n}  \phi_{j-k+\kappa_{\delta}(k)}(x-w)  \mho_k(w-y) K^k_j(w,y) \,dw
  \lesssim 2^{-jn}  \phi_{j-k+\kappa_{\delta}(k)}\ast \mho_k(\cdot-y)\chi_{\{2^{j-3}\leq |\cdot-y|\leq 2^{j+1}\}}(x).
\end{align*}
Then, using Minkowski's inequality and the fact that $\int_{\mathbb{R}^n} \phi =1$, we obtain
\begin{align*}
  \big\|\int_{\mathbb{R}^n}  \phi_{j-k+\kappa_{\delta}(k)}(x-w)  \mho_k(w-x-\cdot) K^k_j(w, x+\cdot) \,dw \big\|_q
    \lesssim  &2^{-jn} \Big(\int_{2^{j-3}\leq |z|\leq 2^{j+1}} |\mho_k(-z) |^q\Big)^{\frac{1}{q}}\\
  \lesssim  &2^{-\frac{jn}{q'}}  \|\mho_k\|_{L^{q}}.
\end{align*}
It follows that
$$\|\tilde{A}_{j,k}(x,x+\cdot)\|_q \lesssim 2^{-\frac{jn}{q'}} \|\mho_k\|_{L^{q}}, $$
and similarly, $\|\tilde{A}_{j,k}(x+\cdot, x)\|_q \lesssim 2^{-\frac{jn}{q'}} \|\mho_k\|_{L^{q}}.$
Hence,
$$[\tilde{A}]_{0,q}:=\sup_{j}2^{\frac{jn}{q'}} \sup_{x\in \mathbb{R}^n} (\|\tilde{A}_{j,k}(x,x+\cdot)\|_q+\|\tilde{A}_{j,k}(x+\cdot,x)\|_q)\lesssim    \|\mho_k\|_{L^{q}}.$$
Applying Lemma \ref{local-e} with $\mathscr{K}_j=\tilde{A}_{j,k}$, we obtain \eqref{Vkbh}.

Finally, since $\Omega_k\in L^{\infty}(\mathbb{S}^{n-1})$, the same argument yields \eqref{Ukbh}. This completes the proof of Lemma \ref{UVbh}.
\end{proof}

\begin{proof}[Proof of Lemma \ref{Ubh}]
To complete the proof, we perform a microlocal decomposition of the kernel $\Omega_k$
as in \cite[p.~1654]{DL} (see also \cite{Seeger, Seeger2} for a related decomposition).
For this purpose, we construct a partition of unity on the support of $\Omega_k$.
Fixed $k \geq 10 n$. Let $\Theta_k = \{ e^k_l \}_l$ be a collection of unit vectors on $\supp \Omega_k$ satisfying the following conditions:
\begin{itemize}
    \item[(i)] $|e^k_l - e^k_{\tilde{l}}| \geq 2^{-k\alpha - 10 n}$  for $l \neq \tilde{l}$;
    \item[(ii)] For any $x' \in \supp \Omega_k$, there exists an $e^k_l$ such that $|e^k_l - x'| \leq 2^{-k\alpha - 10 n}$.
\end{itemize}
Here, the constant $0 < \alpha < 1$ will be chosen later.

To construct such a set $\Theta_k$, we may take a maximal collection $\{ e^k_l \}_l$ satisfying condition (i).
It follows that card$(\Theta_k) \lesssim 2^{k\alpha(n-1)}$.
Moreover, for each $x' \in \supp \Omega_k$, there are only finitely many $e^k_l$ satisfying $|e^k_l - x'| \leq 2^{-k\alpha - 10 n}$.

We now construct an associated partition of unity on $\supp \Omega_k$. Let $\zeta$ be a smooth, nonnegative, radial function satisfying $\zeta(r) = 1$ for $|r| \leq \frac{1}{2}$ and $\zeta(r) = 0$ for $|r| > 1$.
Define
\[
\tilde{\Gamma}^k_l (x) = \zeta \big(2^{k\alpha} ( x' - e^k_l) \big),
\]
and set
\[
\Gamma^k_l(x) = \tilde{\Gamma}^k_l(x)  \big(\sum_l \tilde{\Gamma}^k_l(x)\big)^{-1} .
\]
Clearly, $\Gamma^k_l$ is homogeneous of degree $0$ and satisfies
\[
\sum_l \Gamma^k_l(x) = 1, \quad \text{for all } x \neq 0 \text{ and all } k \geq 10 n.
\]

Next, we define the operator $U^{k,l}_{j}$ by
\begin{equation}\label{Uklj}
U^{k,l}_{j} f(x) = \int_{\mathbb{R}^n} \Gamma^k_l(x-y) \cdot \Omega_k(x-y) K^k_j(x,y) \cdot f(y) \, dy.
\end{equation}
Therefore, we have
$
U^{k}_{j} = \sum_l U^{k,l}_{j}.
$

In the following, we need to separate the phase into different directions.
To this end, we define the multiplier operator
\[
\widehat{\Psi_{k,l} f}(\xi) = \Phi\big(2^{k\alpha} \langle e^k_l, \xi' \rangle\big) \hat{f}(\xi),
\]
where $\Phi$ is a smooth, nonnegative, radial function satisfying
\[
0 \leq \Phi(x) \leq 1, \quad \Phi(x) = 1 \text{ for } |x| \leq 2, \quad \Phi(x) = 0 \text{ for } |x| > 4.
\]

We split $U^{k,l}_{j}$ into two parts as follows:
\[
U^{k,l}_{j} = \Psi_{k,l} U^{k,l}_{j} + (I - \Psi_{k,l}) U^{k,l}_{j}.
\]
Define
\begin{align*}
   \mathcal{U}_1^{k}(b,h):=&  |\sum_j \sum_l \varepsilon_j \langle (I-P_{j-k+\kappa_{\delta}(k)})\Psi_{k,l} U^{k,l}_{j} (b_{j-k}), \bar{h}\rangle |;\\
 \mathcal{U}_2^{k}(b,h):=&  |\sum_j  \sum_l \varepsilon_j \langle (I-P_{j-k+\kappa_{\delta}(k)})(I-\Psi_{k,l}) U^{k,l}_{j} (b_{j-k}), \bar{h}\rangle |.
\end{align*}

We will estimate $\mathcal{U}_1^{k}(b,h)$ and $\mathcal{U}_2^{k}(b,h)$ separately.

\textbf{Estimate of $\mathcal{U}_1^{k}(b,h)$.}

For any fixed $k \geq 1$, it was proved in \cite[eq. (3.1)]{DL} that
\begin{equation}\label{Phi}
\sup_{\xi \neq 0} \sum_{l} | \Phi^2( 2^{k\alpha} \langle e^k_l, \xi' \rangle)| \lesssim 2^{k\alpha(n-2)}.
\end{equation}

By Plancherel's theorem and the Cauchy-Schwarz inequality, we obtain
\begin{equation}\label{PsiU}
\begin{aligned}
  &\|\sum_j \sum_l (I-P_{j-k+\kappa_{\delta}(k)})\Psi_{k,l} U^{k,l}_{j} (b_{j-k})\|_2^2\\
  &= \| \sum_l \Phi\big(2^{k\alpha} \langle e^k_l, \xi' \rangle\big) \mathcal{F}( \sum_j (I-P_{j-k+\kappa_{\delta}(k)})U^{k,l}_{j} (b_{j-k}))(\xi)\|_2^2\\
  &\leq  \|\sum_l \Phi^2\big(2^{k\alpha} \langle e^k_l, \xi' \rangle\big) \sum_l |\mathcal{F}( \sum_j (I-P_{j-k+\kappa_{\delta}(k)}) U^{k,l}_{j} (b_{j-k}))(\xi)|^2 \|_1\\
  &\lesssim  2^{k\alpha(n-2)} \sum_l \| \sum_j(I-P_{j-k+\kappa_{\delta}(k)}) U^{k,l}_{j} (b_{j-k}) \|_2^2.
\end{aligned}
\end{equation}

Recall the definition of $U^{k,l}_{j}$ in \eqref{Uklj}. By \eqref{Kkj}, we have
\begin{align*}
  |(I-P_{j-k+\kappa_{\delta}(k)})U^{k,l}_{j} (b_{j-k})(x)|
  \lesssim & |U^{k,l}_{j} (b_{j-k})(x)|+|P_{j-k+\kappa_{\delta}(k)}U^{k,l}_{j} (b_{j-k})(x)|\\
  \lesssim &  2^{-j n}\|\Omega_k\|_{L^{\infty}}\int_{2^{j-3}\leq |x-y|\leq 2^{j+1}}\Gamma^k_l(x-y)|b_{j-k}(y)|dy\\
   & + 2^{-j n}\|\Omega_k\|_{L^{\infty}} \int_{\mathbb{R}^n}\phi_{j-k+\kappa_{\delta}(k)}(x-w)\int_{2^{j-3}\leq |w-y|\leq 2^{j+1}}\Gamma^k_l(w-y)|b_{j-k}(y)|dydw\\
  \lesssim &  \|\Omega_k\|_{L^{\infty}} H^{k,l}_j \ast |b_{j-k}|(x),
\end{align*}
where $ H^{k,l}_j(x)=2^{-j n} \chi_{E^{k,l}_j}(x)$, and $\chi_{E^{k,l}_j}$ denotes the characteristic function of the set
$$
E^{k,l}_j:=\{x\in \mathbb{R}^n:  |  \langle x, e^k_l\rangle  | \leq  2^{j+2+\kappa_{\delta}(k)}, |x- \langle x, e^k_l\rangle e^k_l| \leq 2^{j+2-k\alpha+\kappa_{\delta}(k)}    \}.
$$
Therefore, for each fixed $e^k_l$, we have
\begin{align*}
   \| \sum_j (I-P_{j-k+\kappa_{\delta}(k)})U^{k,l}_{j} (b_{j-k})) \|_2^2 \lesssim &  \|\Omega_k\|_{L^{\infty}}^2 \sum_j \int_{\mathbb{R}^n} H^{k,l}_j\ast H^{k,l}_j\ast |b_{j-k}|(x) \cdot |b_{j-k}|(x) dx\\
  & + \|\Omega_k\|_{L^{\infty}}^2 \sum_j \sum_{i=-\infty}^{j-1} \int_{\mathbb{R}^n} H^{k,l}_j\ast H^{k,l}_i\ast |b_{j-k}|(x) \cdot |b_{j-k}|(x) dx.
\end{align*}

From the estimate $|E^{k,l}_j|\lesssim 2^{jn+\kappa_{\delta}(k)n-k\alpha (n-1)}$, it follows that
\[
\|H^{k,l}_i \|_1 \lesssim 2^{-in} |E^{k,l}_i| \lesssim 2^{\kappa_{\delta}(k)n-k\alpha(n-1)}.
\]
Consequently, for any \(i \leq j\), we have
\[
H^{k,l}_j * H^{k,l}_i(x) \lesssim 2^{\kappa_{\delta}(k)n-k\alpha(n-1)-jn} \chi_{\widetilde{E}^{k,l}_j}(x),
\]
where \(\widetilde{E}^{k,l}_j = E^{k,l}_j + E^{k,l}_j\).

Recall that for each $L\in \mathscr{Q}$, we have \(\|b_L\|_1 \lesssim  |L|\|b\|_{\dot{\mathcal{X}}_1}\), and the cubes $L$ are pairwise disjoint.
For fixed \(j\), \(k\), \(e^k_l\), we deduce that
\begin{align*}
&H^{k,l}_j * H^{k,l}_j * |b_{j-k}|(x) + 2 \sum_{i=-\infty}^{j-1} H^{k,l}_j * H^{k,l}_i * |b_{i-k}|(x)\\
&\lesssim 2^{\kappa_{\delta}(k)n-k\alpha(n-1)-jn } \sum_{i \leq j} \int_{x + \widetilde{E}^{k,l}_j} |b_{i-k}(y)|dy\\
&\lesssim 2^{\kappa_{\delta}(k)n-k\alpha(n-1)-jn}  \sum_{i \leq j} \sum_{\substack{L \in \mathscr{Q}, i_L=i-k \\ L \cap (x+\widetilde{E}^{k,l}_j) \neq \emptyset}} \int_{\mathbb{R}^n} |b_L(y)| \, dy\\
&\lesssim 2^{\kappa_{\delta}(k)n-k\alpha(n-1)-jn}  \sum_{i \leq j} \sum_{\substack{L \in \mathscr{Q}, i_L=i-k \\ L \cap (x+\widetilde{E}^{k,l}_j) \neq \emptyset}}  |L|\|b\|_{\dot{\mathcal{X}}_1}\\
&\lesssim 2^{\kappa_{\delta}(k)n-k\alpha(n-1)-jn} |\widetilde{E}^{k,l}_j|\|b\|_{\dot{\mathcal{X}}_1}\\
&\lesssim 2^{\kappa_{\delta}(k)n-k\alpha(n-1)-jn}  2^{\kappa_{\delta}(k)n+jn-k\alpha(n-1)} \|b\|_{\dot{\mathcal{X}}_1}\\
&= 2^{2\kappa_{\delta}(k)n-2k\alpha(n-1)}\|b\|_{\dot{\mathcal{X}}_1}.
\end{align*}

Moreover, using \(\|b_L\|_1 \lesssim  |L|\|b\|_{\dot{\mathcal{X}}_1}\) and the disjointness of $L$, we obtain
\begin{align*}
  \sum_{j} \int_{\mathbb{R}^n}|b_{j-k}(x)|dx \leq   \sum_{j} \sum_{L\in \mathscr{Q}, j_L=j-k } \int_{\mathbb{R}^n}|b_{L}(x)|dx
   \leq   \sum_{j} \sum_{L\in \mathscr{Q}, j_L=j-k } |L|\|b\|_{\mathcal{X}_1}
   \lesssim  |Q| \|b\|_{\dot{\mathcal{X}}_1}.
\end{align*}
Consequently,
\begin{align*}
   \| \sum_j(I-P_{j-k+\kappa_{\delta}(k)}) U^{k,l}_{j} (b_{j-k})) \|_2^2 \lesssim &  \|\Omega_k\|_{L^{\infty}}^2  2^{2\kappa_{\delta}(k)n-2k\alpha(n-1)}\|b\|_{\dot{\mathcal{X}}_1} \sum_{j} \int_{\mathbb{R}^n}|b_{j-k}(x)|dx\\
   \lesssim &  \|\Omega_k\|_{L^{\infty}}^2  2^{2\kappa_{\delta}(k)n-2k\alpha(n-1)} |Q|\|b\|_{\dot{\mathcal{X}}_1}^2.
\end{align*}

Substituting the above bound into \eqref{PsiU} and using card$(\Theta_k)\lesssim 2^{k\alpha (n-1)}$, we obtain
$$
\|\sum_j \sum_l (I-P_{j-k+\kappa_{\delta}(k)})\Psi_{k,l} U^{k,l}_{j} (b_{j-k})\|_2^2 \lesssim 2^{2\kappa_{\delta}(k)n-k\alpha} \|\Omega_k\|_{L^{\infty}}^2 |Q|\|b\|_{\dot{\mathcal{X}}_1}^2.
$$
By H\"{o}lder's inequality, and the fact that $\|h\|_2\lesssim |Q|^{1/2}\|h\|_{\mathcal{Y}_2}\lesssim |Q|^{1/2}\|h\|_{\mathcal{Y}_{\infty}}$, we have
\begin{equation}\label{U1bh}
\begin{aligned}
  \mathcal{U}_1^{k}(b,h)\leq &  \|\sum_j \sum_l(I-P_{j-k+\kappa_{\delta}(k)})\Psi_{k,l} U^{k,l}_{j} (b_{j-k})\|_2  \|h\|_2\\
  \lesssim & 2^{n \kappa_{\delta}(k)-k\alpha/2}  |Q|\|b\|_{\dot{\mathcal{X}}_1}  \|h\|_{\mathcal{Y}_{\infty}}\|\Omega_k\|_{L^{\infty}}.
\end{aligned}
\end{equation}

\textbf{Estimate of $\mathcal{U}_2^{k}(b,h)$.}

Before giving the estimate of $\mathcal{U}_2^{k}(b,h)$, we first introduce some notations, starting with the Littlewood-Paley decomposition.
Let \( \eta \in C^\infty\) be a radial function such that
\[
\eta(\xi) = 1 \text{ for } |\xi| \le 1, \quad
\eta(\xi) = 0 \text{ for } |\xi| \ge 2, \quad
0 \le \eta(\xi) \le 1 \text{ for all } \xi \in \mathbb{R}^n.
\]
Define
\[
\beta_m(\xi) = \eta(2^m \xi) - \eta(2^{m+1} \xi),
\]
so that \( \beta_m \) is supported in \( \{ \xi : 2^{-m-1} \leq |\xi| \leq 2^{-m+1} \} \).
Let \( \tilde{\beta}\in C^\infty\) be a radial function satisfying \[
\tilde{\beta}(\xi) = 1 \text{ for } \tfrac{1}{2} \le |\xi| \le 2,
\quad \supp \tilde{\beta} \subset \{ \xi : \tfrac{1}{4} \le |\xi| \le 4 \},
\quad 0 \le \tilde{\beta}(\xi) \le 1.
\]
Set $\tilde{\beta}_k(\xi)=\tilde{\beta}(2^m\xi)$.
Then it is easy to see $\beta_m=\tilde{\beta}_m\beta_m(\xi)$.
Define the convolution operators \( \Upsilon_m\) and \( \tilde{\Upsilon}_m \) with Fourier multipliers \( \beta_m \) and \( \tilde{\beta}_m \), respectively; that is,
\[
\widehat{\Upsilon_m f}(\xi) = \beta_m( \xi) \hat{f}(\xi), \quad \widehat{\tilde{\Upsilon}_m f}(\xi) = \tilde{\beta}_m(\xi) \hat{f}(\xi).
\]
It follows from the construction of \( \beta_m \) and \( \tilde{\beta}_m \) that $\Upsilon_m=\tilde{\Upsilon}_m \Upsilon_m, \sum_{m\in\mathbb{Z}}\Upsilon_m=I$.

We now turn to the estimate of $\mathcal{U}_2^{k}(b,h)$.
By H\"{o}lder's inequality, the triangle inequality, and the fact that $\|h\|_{\infty} \lesssim \|h\|_{\mathcal{Y}_{\infty}}$, we obtain
\begin{equation}\label{U2kbh}
  \begin{aligned}
    \mathcal{U}_2^{k}(b,h) \leq &   \|\sum_j  \sum_l (I-P_{j-k+\kappa_{\delta}(k)}) (I-\Psi_{k,l}) U^{k,l}_{j} (b_{j-k})\|_1 \|h\|_{\infty}\\
 \leq & \sum_j  \sum_l \sum_{L\in \mathscr{Q}, j_L=j-k} \| (I-P_{j-k+\kappa_{\delta}(k)}) (I-\Psi_{k,l}) U^{k,l}_{j} (b_{L})\|_1 \|h\|_{\mathcal{Y}_{\infty}}\\
 \leq & \sum_j  \sum_l \sum_{L\in \mathscr{Q}, j_L=j-k} \sum_m \| \Upsilon_m(I-P_{j-k+\kappa_{\delta}(k)}) (I-\Psi_{k,l}) U^{k,l}_{j} (b_{L})\|_1 \|h\|_{\mathcal{Y}_{\infty}}.
  \end{aligned}
\end{equation}
Therefore, it remains to estimate $ \| \Upsilon_m(I-P_{j-k+\kappa_{\delta}(k)}) (I-\Psi_{k,l}) U^{k,l}_{j} (b_{L})\|_1$.

Since $\Upsilon_m=\tilde{\Upsilon}_m \Upsilon_m$, it follows that
$$\Upsilon_m(I-P_{j-k+\kappa_{\delta}(k)}) (I-\Psi_{k,l}) U^{k,l}_{j}=\tilde{\Upsilon}_m \Upsilon_m(I-P_{j-k+\kappa_{\delta}(k)}) (I-\Psi_{k,l}) U^{k,l}_{j}.$$

To estimate this term, observe first that the support and cancellation properties of the kernel of $I-P_{j-k+\kappa_{\delta}(k)}$ imply (see also \cite{Seeger2})
\begin{equation}\label{xiaoshi}
\|\tilde{\Upsilon}_m (I-P_{j-k+\kappa_{\delta}(k)})\|_{L^1\to L^1}\lesssim \min\{1, 2^{j-k+\kappa_{\delta}(k)-m}\}.
\end{equation}

Next, note that
\[
\mathcal{F}[ (I - \Psi_{k,l}) \Upsilon_m f](\xi) = ( 1 - \Phi(2^{k\alpha}\langle e^k_l, \xi' \rangle) )\beta_m(\xi) \hat{f}(\xi).
\]
The multiplier above defines a bounded $L^1$ Fourier multiplier. Hence,
\begin{equation}\label{IPU}
\|(I - \Psi_{k,l}) \Upsilon_m\|_{L^1\to L^1} \lesssim 1,
\end{equation}
uniformly in $k, l$ and $m$ (see \cite[page 100]{Seeger} for details).

Recalling the definition of $U^{k,l}_{j}$ in \eqref{Uklj}, we obtain from \eqref{Kkj} and the support of $\Gamma^k_l$ that
\begin{align*}
  \|U^{k,l}_{j} f\|_1\leq 2^{-j n }\|\Gamma^k_l\chi_{\{2^{j-3}\leq|x|\leq 2^{j+1}\}}\|_1\|f\|_1\|\Omega_k\|_{L^{\infty}}
  \lesssim 2^{-k\alpha(n-1)}\|f\|_1\|\Omega_k\|_{L^{\infty}}.
\end{align*}

Together with \eqref{xiaoshi} and \eqref{IPU}, we obtain
\begin{equation}\label{msmall}
\begin{aligned}
  \|\Upsilon_m(I-P_{j-k+\kappa_{\delta}(k)}) (I-\Psi_{k,l}) U^{k,l}_{j}(b_L)\|_1\lesssim 2^{-k\alpha(n-1)} \min\{1, 2^{j-k+\kappa_{\delta}(k)-m}\}\|b_L\|_1\|\Omega_k\|_{L^{\infty}}.
\end{aligned}
\end{equation}

Note that the bound in \eqref{msmall} converges for small $m$, and we also need a separate estimate to handle large $m$.
Observe that $\|I-P_{j-k+\kappa_{\delta}(k)}\|_{L^1\to L^1}\lesssim 1.$
Therfore,
$$\|\Upsilon_m (I-P_{j-k+\kappa_{\delta}(k)}) (I-\Psi_{k,l}) U^{k,l}_{j} (b_{L})\|_1\leq \| \Upsilon_m (I-\Psi_{k,l}) U^{k,l}_{j} (b_{L})\|_1.$$

Let $R^{k,l}_m$ denote the Fourier multiplier of $\Upsilon_m (I-\Psi_{k,l}) $:
$$R^{k,l}_m(\xi)=(1-\Phi\big(2^{k\alpha} \langle e^k_l, \xi' \rangle\big))\beta_m(\xi).$$
By Fubini's theorem, we may write
\begin{align*}
   &\Upsilon_m (I-\Psi_{k,l}) U^{k,l}_{j} (b_{L})(x)\\
   &=\frac{1}{(2\pi)^n}\int_{\mathbb{R}^n} e^{i x\cdot \xi} R^{k,l}_m(\xi) \int_{\mathbb{R}^n} e^{-i w\cdot \xi} \int_{\mathbb{R}^n} \Gamma^k_l(w-y) \cdot \Omega_k(w-y) K^k_j(w,y) \cdot b_L(y) \, dy dw d\xi\\
   &=\frac{1}{(2\pi)^n}\int_{\mathbb{R}^n} \biggl\{\int_{\mathbb{R}^n} e^{i x\cdot \xi} R^{k,l}_m(\xi) \int_{\mathbb{R}^n} e^{-i w\cdot \xi}  \Gamma^k_l(w-y) \cdot \Omega_k(w-y) K^k_j(w,y) dw d\xi\biggr\} \cdot b_L(y) \, dy.
\end{align*}

Applying Minkowski's inequality, we obtain
\begin{equation}\label{UmIP}
\begin{aligned}
  &\|\Upsilon_m (I-\Psi_{k,l}) U^{k,l}_{j} (b_{L})\|_1\\
  &\leq \frac{1}{(2\pi)^n}\int_{\mathbb{R}^n} \Big\|\int_{\mathbb{R}^n} e^{i x\cdot \xi} R^{k,l}_m(\xi) \int_{\mathbb{R}^n} e^{-i w\cdot \xi}  \Gamma^k_l(w-y) \cdot \Omega_k(w-y) K^k_j(w,y) dw d\xi\Big\|_1 \cdot b_L(y) \, dy.
\end{aligned}
\end{equation}
Thus, it remains to estimate the $L^1$ norm of the inner integral.

Changing variables by $w=y+r\theta$, we have
\begin{align*}
  \int_{\mathbb{R}^n} e^{i x\cdot \xi} R^{k,l}_m(\xi) \int_{\mathbb{S}^{n-1}}\int_0^{\infty} e^{-i (y+r\theta)\cdot \xi}  \Gamma^k_l(\theta) \cdot \Omega_k(\theta) K^k_j(y+r\theta,y) r^{n-1}drd\theta d\xi\\
  =\int_{\mathbb{S}^{n-1}} \Omega_k(\theta) \Gamma^k_l(\theta)  \int_{\mathbb{R}^n} R^{k,l}_m(\xi)  \int_0^{\infty} e^{i (x-y-r\theta)\cdot \xi}  K^k_j(y+r\theta,y) r^{n-1}dr d\xi d\theta.
\end{align*}

Integrating \(N\) times by parts with respect to \(r\), the integral can be rewritten as
\[
\int_0^\infty e^{i \langle x - y - r \theta, \xi \rangle} (i \langle \theta, \xi \rangle)^{-N} \partial_r^{N} \left[ K_j^k(y + r \theta, y) r^{n-1} \right] dr.
\]

Observe that the operator $I-2^{-2m}\Delta_{\xi}$ is self-adjoint and acts on the exponential $e^{i\langle x-y-r \theta, \xi\rangle}$ as:
$$(I-2^{-2m}\Delta_{\xi})e^{i\langle x-y-r\theta, \xi\rangle}=(1+2^{-2m}|x-y-r \theta|^2)e^{i\langle x-y-r \theta, \xi\rangle}.$$
Therefore, the integral can be expresssed as
\begin{equation}\label{Part}\begin{aligned}
& \int_{\mathbb{S}^{n-1}} \Omega_k(\theta) \Gamma^k_l(\theta) \int_{\mathbb{R}^n} e^{i \langle x - y - r\theta, \xi \rangle} \int_0^\infty \partial_r^{N} \Big( K^k_j(y + r \theta, y) r^{n-1} \Big)  \\
&\times\frac{(I - 2^{-2m} \Delta_\xi)^{N_1}\Big( R^{k,l}_m (\xi) (i \langle \theta, \xi \rangle)^{-N} \Big)}{(1 + 2^{-2m} |x - y - r\theta|^2)^{N_1}}  \,dr \,d\xi  \,d\theta.
\end{aligned}\end{equation}

We now provide an explicit $L^1$ estimate for the term in \eqref{Part}.
From the definition of \(K^k_j(x, y)\) in \eqref{Kkj0}, it follows that
\begin{equation}\label{Part1}\begin{aligned}
|\partial_x^\sigma K^k_j(x, y)| &= 2^{-(j - \kappa_\delta(k))|\sigma|} \left| \int (\partial_x^\sigma \phi)_{j - \kappa_\delta(k)}(x - z) K_j(z, y)  dz \right| \\
&\leq 2^{-(j - \kappa_\delta(k))|\sigma|} \|K_j(\cdot, y)\|_\infty \|\partial_x^\sigma \phi\|_1 \\
&\lesssim 2^{-(j - \kappa_\delta(k))|\sigma| - jn}.
\end{aligned}\end{equation}

Applying the product rule, we obtain
\begin{align*}
| \partial_r^{N} \big( K^k_j(y + r \theta, y) r^{n-1} \big) |
&= | \sum_{i=0}^{N} C^i_{N} \partial_r^i (K^k_j(y + r \theta, y)) \partial_r^{N - i} (r^{n-1}) | \\
&= | \sum_{i = N - n + 1}^{N} C^i_{N} \partial_r^i (K^k_j(y + r \theta, y)) \partial_r^{N - i} (r^{n-1})|.
\end{align*}

By the support of \(K_j^k(x, y)\) in \eqref{Kkj}, we have \(2^{j-3} \leq r \leq 2^{j+1}\).
Using \eqref{Part1}, the above is bounded by
\begin{equation}\label{Part2}
| \partial_r^{N} \big( K^k_j(y + r \theta, y) r^{n-1} \big) |\leq \sum_{i = N - n + 1}^{N} C^i_{N} 2^{-(j - \kappa_\delta(k))i - jn} 2^{(j+1)(n-1 - N + i)} \leq C_{N} k^{2\delta^{-1} N} 2^{-j(1 + N)}.
\end{equation}

Moreover, it was shown in \cite[eq. (4.12)]{DL} that for any \(N_1 \in \mathbb{Z}_+\),
\begin{equation}\label{I-D}
\Big| (I - 2^{-2m} \Delta_\xi)^{N_1} \Big( \langle \theta, \xi \rangle^{-N} R^{k,l}_m(\xi) \Big) \Big| \leq C_{N} 2^{(k\alpha + m)N + 2k\alpha N_1}.
\end{equation}

We now choose \(N_1 = [n/2] + 1\). To establish the \(L^1\) estimate of \eqref{Part}, we use the support of \(R^{k,l}_m\), and Fubini's theorem to obtain
\begin{equation}\label{hksv}
\int_{\supp R^{k,l}_m} \int (1 + 2^{-2m} |x - y - r \theta|^2 )^{-N_1} \,dx  \,d\xi \lesssim 1.
\end{equation}

Combining \eqref{Part2}--\eqref{hksv}, we obtain the following bound for the $L^1$ norm of \eqref{Part}:
\begin{align*}
&C_{N} k^{2\delta^{-1} N} 2^{-j(1 + N) + (k\alpha + m)N + 2k\alpha N_1}\frac{1}{(2\pi)^n}
 \int_{\mathbb{S}^{n-1}} |\Omega_k(\theta) \Gamma^k_l(\theta) | \int^{2^{j+1}}_{2^{j-3}} \,dr \,d\theta\\
&\leq C_{N} k^{2\delta^{-1} N} 2^{-j(1 + N) + (k\alpha + m)N + 2k\alpha N_1+j-k\alpha(n-1)}\|\Omega_k\|_{L^{\infty}}\\
&=C_{N} k^{2\delta^{-1} N} 2^{(-j+m) N + k\alpha (N + 2 N_1) -k\alpha(n-1)}\|\Omega_k\|_{L^{\infty}}.
\end{align*}

Now, by \eqref{UmIP} and \eqref{Part}, we get
\begin{equation}\label{mlarge}
\begin{aligned}
  \| \Upsilon_m (I-\Psi_{k,l}) U^{k,l}_{j} (b_{L})\|_1\lesssim C_{N} k^{2\delta^{-1} N} 2^{(-j+m) N + k\alpha (N + 2 [n/2]+2) -k\alpha(n-1)}\|b_L\|_1\|\Omega_k\|_{L^{\infty}}.
\end{aligned}
\end{equation}

With \eqref{msmall} and \eqref{mlarge} at hand, we now complete the estimate of $ \mathcal{U}_2^{k}(b,h)$.
Let $m_0=j-[k/2]$ and choose $N=100n-n-2$. Then
\begin{align*}
  &\| (I-P_{j-k+\kappa_{\delta}(k)}) (I-\Psi_{k,l}) U^{k,l}_{j} (b_{L})\|_1\\
  &\leq \sum_m\| \Upsilon_m (I-P_{j-k+\kappa_{\delta}(k)})(I-\Psi_{k,l}) U^{k,l}_{j} (b_{L})\|_1\\
  &\lesssim \sum_{m\leq m_0}2^{-k\alpha(n-1)} \min\{1, 2^{j-k+\kappa_{\delta}(k)-m}\}\|b_L\|_1\|\Omega_k\|_{L^{\infty}}\\
  &\quad +\sum_{m> m_0}C_{N} k^{2\delta^{-1} N} 2^{(-j+m) N + k\alpha (N + 2 [n/2]+2) -k\alpha(n-1)}\|b_L\|_1\|\Omega_k\|_{L^{\infty}}\\
  &\lesssim 2^{-k\alpha(n-1)}(2^{[k/2]-k+\kappa_{\delta}(k)}+k^{200n\delta^{-1}} 2^{-[k/2] N + k\alpha (N + 2 [n/2]+2)} )  \|b_L\|_1\|\Omega_k\|_{L^{\infty}}\\
  &\lesssim 2^{-k\alpha(n-1)}(2^{-k/2+\kappa_{\delta}(k)}+k^{200n \delta^{-1}} 2^{-48 nk + 100 n k\alpha} ) \|b_L\|_1\|\Omega_k\|_{L^{\infty}}.
\end{align*}

Using \eqref{U2kbh} and the estimate above, we obtain
\begin{align*}
 \mathcal{U}_2^{k}(b,h)
 \leq & \sum_j  \sum_l \sum_{L\in \mathscr{Q}, j_L=j-k} \| (I-P_{j-k+\kappa_{\delta}(k)}) (I-\Psi_{k,l}) U^{k,l}_{j} (b_{L})\|_1 \|h\|_{\mathcal{Y}_{\infty}}\\
 \lesssim & \sum_j  \sum_l \sum_{L\in \mathscr{Q}, j_L=j-k}   2^{-k\alpha(n-1)}(2^{-k/2+\kappa_{\delta}(k)}+k^{200n\delta^{-1}} 2^{-48 n k+ 100 n k\alpha} ) \|b_L\|_1 \|h\|_{\mathcal{Y}_{\infty}}\|\Omega_k\|_{L^{\infty}}\\
  \lesssim & (2^{-k/2+\kappa_{\delta}(k)}+k^{200n\delta^{-1}} 2^{-48 nk + 100 n k\alpha} ) \|b\|_{\dot{\mathcal{X}}_1} \|h\|_{\mathcal{Y}_{\infty}}\|\Omega_k\|_{L^{\infty}},
\end{align*}
where the last inequality follows from \(\|b_L\|_1 \lesssim  |L|\|b\|_{\dot{\mathcal{X}}_1}\), the disjointness of $L$, and card$(\Theta_k)\lesssim 2^{k\alpha (n-1)}$.

Taking $\alpha=1/4$, and combining the estimates of $\mathcal{U}_1^{k}(b,h)$ in \eqref{U1bh} and $\mathcal{U}_2^{k}(b,h)$ above, we obtain
\begin{align*}
  \mathcal{U}^{k}(b,h)\leq \mathcal{U}_1^{k}(b,h)+\mathcal{U}_2^{k}(b,h)\lesssim k^{200n\delta^{-1}}  2^{-k/8} |Q|\|b\|_{\dot{\mathcal{X}}_1}  \|h\|_{\mathcal{Y}_{\infty}}\|\Omega_k\|_{L^{\infty}}.
\end{align*}
This completes the proof of Lemma \ref{Ubh}.
\end{proof}

\section{Proofs of the Application Theorems}\label{sec-pfTb}
\setcounter{equation}{0}

In this section, we provide the proofs of the results stated in the Applications subsection of the Introduction.

\begin{proof}[Proof of Theorem \ref{wTbk}]
To apply Corollary \ref{cor1}, we need to verify two facts:

(i) $T_{\Omega, B, k}$ is bounded on $L^{2}$;

(ii) its kernel satisfies \eqref{Ksize} and \eqref{Ksmooth}.

The $L^{2}$ boundedness
\[
\|T_{\Omega, B, k}f\|_{2}
   \lesssim \|\nabla B\|_\infty^{k}\|\Omega\|_{L^{q}}\|f\|_{2}
\]
is a direct consequence of \cite[Theorem 1.3a]{Hof90};
see also \cite{CHT24} for extensions to Grafakos and Stefanov \cite{GraS} type kernels.
Thus it remains to check that the kernel
\[
K(x,y)= \frac{1}{|x-y|^{n}} \Big(\frac{B(x)-B(y)}{|x-y|}\Big)^{k}
\]
satisfies the size condition \eqref{Ksize} and the smoothness condition \eqref{Ksmooth}.

As proved in \cite[Subsection 5.2]{DL}, we have
$$
  |K(x,y)| \lesssim \|\nabla B\|^{k}_{\infty}  \frac{1}{|x-y|^n},
$$
and
$$
\begin{aligned}
|K(x_1,y)-K(x_2,y)|\lesssim &  \|\nabla B\|^{k}_{\infty}   \frac{|x_1-x_2|}{|x_1-y|^{n+1}},\quad |x_1-y|>2|x_1-x_2|,\\
|K(x,y_1)-K(x,y_2)|\lesssim & \|\nabla B\|^{k}_{\infty}   \frac{|y_1-y_2|}{|x-y_1|^{n+1}},\quad |x-y_1|>2|y_1-y_2|.
\end{aligned}
$$
This completes the proof.
\end{proof}

\begin{proof}[Proof of Theorem \ref{Tbr}]
It was shown in \cite[Theorem 1.1]{CDH16} that
 \begin{equation*}
    \|T_{\Omega,B,\gamma}f\|_{2}\lesssim \|D^{\gamma}B\|_{\mathrm{BMO}}\|\Omega\|_{L^q} \|f\|_{2}.
  \end{equation*}
By Corollary \ref{cor1}, it suffices to verify that the kernel
$$K(x,y)=\frac{1}{|x-y|^{n}} \frac{B(x)-B(y)}{|x-y|^{\gamma}}$$
satisfies \eqref{Ksize} and \eqref{Ksmooth}.

It was shown by Strichartz \cite{Stri} that if $\gamma \in (0,1)$,
the space $I_\gamma(\mathrm{BMO})(\mathbb{R}^n)$, considered modulo constants,
is properly contained in the Lipschitz space
$\mathrm{Lip}_\gamma(\mathbb{R}^n)$,
consisting of functions $f$ satisfying
$|f(x)-f(y)| \le C|x-y|^\gamma$ for all $x,y \in \mathbb{R}^n$. In particular,
$$\|B\|_{\mathrm{Lip}_{\gamma}}\lesssim \|D^{\gamma}B\|_{\mathrm{BMO}}.$$
Hence,
$$
  |K(x,y)| \lesssim \|D^{\gamma}B\|_{\mathrm{BMO}} \frac{1}{|x-y|^n},
$$
and by the mean value theorem, for $|x_1-y|>2|x_1-x_2|$, we have
\begin{align*}
|K(x_1,y)-K(x_2,y)|
\leq & \frac{| B(x_1)-B(x_2)|}{|x_1-y|^{n+\gamma}} + |B(x_2)-B(y)| \Big| \frac{1}{|x_1-y|^{n+\gamma}}-\frac{1}{|x_2-y|^{n+\gamma}} \Big| \\
\lesssim &  \|D^{\gamma}B\|_{\mathrm{BMO}}  \frac{|x_1-x_2|^{\gamma}}{|x_1-y|^{n+\gamma}},
\end{align*}
and similarly, for $|x-y_1|>2|y_1-y_2|$, we have
$$
|K(x,y_1)-K(x,y_2)|\lesssim  \|D^{\gamma}B\|_{\mathrm{BMO}}  \frac{|y_1-y_2|^{\gamma}}{|x-y_1|^{n+\gamma}}.
$$
Therefore, the kernel $K$ satisfies the required size and smoothness conditions,
and the proof is complete.
\end{proof}

\begin{proof}[Proof of Theorem \ref{bF1}]
Under the assumptions of Theorem \ref{bF1}, Calder\'{o}n \cite{Ca80} proved the $L^2$ boundedness
$$\|T_{\Omega, B, F}\|_2 \lesssim C_{B,F} \|\Omega\|_{L^1} \|f\|_2 ,$$
where $C_{B,F}$ depends only on $n$, $B$, and $F$.
Thus by Corollary \ref{cor1}, it suffices to verify that the kernel
$$K(x,y)=\frac{1}{|x-y|^{n}}F\Big(\frac{B(x)-B(y)}{|x-y|}\Big)$$
satisfies \eqref{Ksize} and \eqref{Ksmooth}.
According to \cite[Subsection 5.3]{DL}, we see that
$$
  |K(x,y)| \lesssim C_{B,F}\frac{1}{|x-y|^n},
$$
and
$$
\begin{aligned}
|K(x_1,y)-K(x_2,y)|\lesssim &  C_{B,F} \frac{|x_1-x_2|}{|x_1-y|^{n+1}},\quad |x_1-y|>2|x_1-x_2|,\\
|K(x,y_1)-K(x,y_2)|\lesssim & C_{B,F} \frac{|y_1-y_2|}{|x-y_1|^{n+1}},\quad |x-y_1|>2|y_1-y_2|.
\end{aligned}
$$
Hence, we complete the proof.
\end{proof}

\begin{proof}[Proof of Theorem \ref{Tbpl}]
Under the assumptions of Theorem \ref{Tbpl}, Baj\v{s}anski and Coifman \cite{Bajsan} showed that
$$\|T_{\Omega, B}^l f\|_2 \lesssim \sum_{|\alpha|=l} \|B_{\alpha}\|_{\infty} \|\Omega\|_{L^q}\|f\|_2 .$$
Therefore, by Corollary \ref{cor1}, it suffices to verify that the kernel
$$K(x,y)=\frac{1}{|x-y|^{n}}\frac{P_l(B,x,y)}{|x-y|^l}$$
satisfies \eqref{Ksize} and \eqref{Ksmooth}.
As shown in \cite[Subsection 5.4]{DL}, we have
$$
  |K(x,y)| \lesssim \sum_{|\alpha|=l} \|B_{\alpha}\|_{\infty}\frac{1}{|x-y|^n},
$$
and
$$
\begin{aligned}
|K(x_1,y)-K(x_2,y)|\lesssim & \sum_{|\alpha|=l} \|B_{\alpha}\|_{\infty} \frac{|x_1-x_2|}{|x_1-y|^{n+1}},\quad |x_1-y|>2|x_1-x_2|,\\
|K(x,y_1)-K(x,y_2)|\lesssim & \sum_{|\alpha|=l} \|B_{\alpha}\|_{\infty} \frac{|y_1-y_2|}{|x-y_1|^{n+1}},\quad |x-y_1|>2|y_1-y_2|.
\end{aligned}
$$
This completes the proof.
\end{proof}

\begin{proof}[Proof of Theorem \ref{ir1}]
Under the assumptions of Theorem \ref{ir1}, it was shown in \cite[Theorem 8]{Mucken} that
$$\|T_{\Omega, ir}f\|_2\lesssim \|\Omega\|_{L^1}\|f\|_2 .$$
Thus by Corollary \ref{cor1}, it suffices to verify that the kernel
$$K(x,y)=\frac{1}{|x-y|^{n+ir}}$$
satisfies \eqref{Ksize} and \eqref{Ksmooth}.
Note that
$$
  |K(x,y)| = \frac{1}{|x-y|^n}.
$$
Using the mean value theorem, we also have
$$
\begin{aligned}
|K(x_1,y)-K(x_2,y)|\lesssim & \frac{|x_1-x_2|}{|x_1-y|^{n+1}},\quad |x_1-y|>2|x_1-x_2|,\\
|K(x,y_1)-K(x,y_2)|\lesssim & \frac{|y_1-y_2|}{|x-y_1|^{n+1}},\quad |x-y_1|>2|y_1-y_2|.
\end{aligned}
$$
The proof is complete.
\end{proof}

%
%
%
%
%
%
%

\bigskip
\noindent
{\bf Declaration of competing interest}

The authors declare that there is no conflict of interest.

\bigskip
\noindent
{\bf Data availability}

No data was used for the research described in the article.


\bigskip
\section*{ Acknowledgements} 
This work was in part supported by: Yanping Chen's National Natural Science Foundation of China [grant numbers 12525105, 12371092 ].

\end{document}